\documentclass[12pt, reqno]{amsart}
\usepackage{amsmath, amsthm, amscd, amsfonts, amssymb, graphicx, color, float, enumerate, graphicx}
\usepackage[bookmarksnumbered, colorlinks, plainpages]{hyperref}
\input{mathrsfs.sty}
\hypersetup{colorlinks=true,linkcolor=red, anchorcolor=green, citecolor=cyan, urlcolor=red, filecolor=magenta, pdftoolbar=true}

\newtheorem{theorem}{Theorem}[section]
\newtheorem{lemma}[theorem]{Lemma}
\newtheorem{proposition}[theorem]{Proposition}
\newtheorem{corollary}[theorem]{Corollary}
\theoremstyle{definition}
\newtheorem{definition}[theorem]{Definition}
\newtheorem{example}[theorem]{Example}

\newtheorem{problem}[theorem]{Problem}
\theoremstyle{remark}
\newtheorem{remark}[theorem]{Remark}
\numberwithin{equation}{section}

\begin{document}

\title[Schatten norms on Hilbert $C^*$-modules via pure states]{Schatten norms on Hilbert $C^*$-modules via pure states}

\author[S. Abedi, M. S. Moslehian]{Sajjad Abedi \MakeLowercase{and} Mohammad Sal Moslehian}

\address{Department of Pure Mathematics, Faculty of Mathematical Sciences, Ferdowsi University of Mashhad, P. O. Box 1159, Mashhad 91775, Iran}
\email{sajjad.abedi@mail.um.ac.ir; sajjadabedi2022@gmail.com}
\email{moslehian@um.ac.ir; moslehian@yahoo.com}

\subjclass{Mathematics Subject Classification}
\subjclass[]{46L08, 46L05, 46B15, 47L10.}

\keywords{Hilbert $C^*$-module; Pure state; Schatten $k$-norm; Representation.} 

\begin{abstract}
Let $(\mathscr{E}, \langle \cdot, \cdot\rangle)$ be a Hilbert $C^*$-module over a $C^*$-algebra $\mathfrak{A}$. The space of adjointable operators on $\mathscr{E}$ is denoted by $\mathcal{L}\left(\mathscr{E}\right)$. The sets of all states and pure states on $\mathfrak{A}$ are denoted by $\mathcal{S}\left(\mathfrak{A}\right)$ and $\mathcal{P}\left( \mathfrak{A}\right)$, respectively. For  $\tau\in\mathcal{S}\left( \mathfrak{A}\right)$, let us define $\mathcal{N}^{\mathscr{E}}_{\tau}:=\left\lbrace x\in\mathscr{E}:\tau\left( \left\langle x,x\right\rangle\right)=0 \right\rbrace$. The Hilbert completion of ${\mathscr{E}}/{\mathcal{N}^{\mathscr{E}}_{\tau}}$ is denoted by $\mathscr{E}_{\tau}$.  For $T\in\mathcal{L}(\mathscr{E})$, the operator $T_{\mathscr{E}_\tau}\in\mathbb{B}\left( \mathscr{E}_\tau\right)$, is defined by $
T_{\mathscr{E}_\tau}\left(x+\mathcal{N}^{\mathscr{E}}_{\tau}\right)=Tx+\mathcal{N}^{\mathscr{E}}_{\tau}$. In this paper, we  show that $\mathscr{E}_{\tau}={\mathscr{E}}/{\mathcal{N}^{\mathscr{E}}_{\tau}}$ when $\mathfrak{A}$ either is a $C^*$-algebra of compact operators or is commutative. 

We introduce a quantity in the context of Hilbert $C^*$-modules, denoted by 
$\pi^{\mathscr{E}}_k(\cdot)$ for $k\geq1$. We prove that $\pi^{\mathscr{E}}_k(T)\leq\sup_{\tau\in\mathcal{P}\left( \mathfrak{A}\right)}\left\|T_{\mathscr{E}_\tau}\right\|_{\left(k\right)}\leq{\pi}^{\mathscr{E}^{\sharp}}_k(T_{\mathscr{E}^{\sharp}})$ for every $T\in\mathcal{L}\left(\mathscr{E}\right)$,  where the space $\mathscr{E}^{\sharp}$ is constructed as the extension of $\mathscr{E}$ by the embedding of $\mathfrak{A}$ into its enveloping von Neumann algebra $\mathfrak{A}^{**}$. Thus, the norm $\pi^{\mathscr{E}}_k(\cdot)$ preserves the Schatten properties, 
as in Hilbert spaces, particularly when 
$\pi^{\mathscr{E}}_k(T) = {\pi}^{\mathscr{E}^{\sharp}}_k(T_{\mathscr{E}^{\sharp}}) 
\quad \text{for all } T \in \mathcal{L}(\mathscr{E})$. This equality holds for $\mathscr{E}$ over commutative $C^*$-algebras with a frame. Moreover, it holds for $\mathscr{E}$ over $C^*$-algebras of compact operators.
\end{abstract}
\maketitle

\section{Introduction and preliminaries}
 Let $\left(\mathcal{X}_{\lambda}\right)_{\lambda \in \Lambda}$ be a family of Banach algebras. We consider the \emph{direct product}  $\prod_{\lambda \in \Lambda}\mathcal{X}_{\lambda}$ consisting of all $(x_{\lambda})$ such that $\left\|\left(x_\lambda\right) \right\|:=\sup_{\lambda\in\Lambda}\left\|x_\lambda \right\|<\infty$. This is a Banach algebra under pointwise operations. Moreover, the \emph{$c_0$-direct sum} $\Sigma_{\lambda\in\Lambda}\mathcal{X}_{\lambda}$ is the set of all $(x_\lambda)\in\prod_{\lambda \in \Lambda}\mathcal{X}_{\lambda}$ such that for each $\varepsilon>0$ there exists a finite subset $\mathcal{F}$ of $\Lambda$ such that $\left\|x_{\lambda}\right\|<\varepsilon$ whenever $\lambda\in\Lambda-\mathcal{F}$. The $c_0$-direct sum $\Sigma_{\lambda \in \Lambda}\mathcal{X}_{\lambda}$ is a closed ideal of $\prod_{\lambda \in \Lambda}\mathcal{X}_{\lambda}$. 

We also recall the \emph{$\ell^1$-direct sum}  $\oplus^{\ell^1}_{\lambda \in \Lambda}\mathcal{X}_{\lambda}$ which consists of all $(x_{\lambda})$ such that $\left\|\left(x_\lambda\right) \right\|_{\ell^1}:=\sup\sum_{\lambda\in\mathcal{F}}\left\|x_\lambda \right\| < \infty$, where the supremum is taken over all finite subsets $\mathcal{F}$ of $\Lambda$. It is straightforward to verify that
	\begin{equation}\label{duality}
		\left(\Sigma_{\lambda \in \Lambda}\mathcal{X}_{\lambda} \right)^* \cong\oplus^{\ell^1}_{\lambda \in \Lambda}\mathcal{X}^*_{\lambda}\quad\text{and}\quad\left(\oplus^{\ell^1}_{\lambda \in \Lambda}\mathcal{X}_{\lambda} \right)^* \cong\prod_{\lambda \in \Lambda}\mathcal{X}^*_{\lambda}.
\end{equation}

Suppose that  $\left(\mathfrak{A}_{\lambda}\right)_{\lambda \in \Lambda}$ is a family of $C^*$-algebras. Then, both the  direct product  $\prod_{\lambda \in \Lambda}\mathfrak{A}_{\lambda}$ and the $c_0$-direct sum $\Sigma_{\lambda\in\Lambda}\mathfrak{A}_{\lambda}$ are $C^*$-algebras.  For example, a $C^*$-algebra of compact operators is of the form
 $\Sigma_{\lambda\in\Lambda}\mathbb{K}\left(\mathscr{H}_{\lambda}\right)$, a $c_0$-direct sum of $C^*$-algebras $\mathbb{K}\left(\mathscr{H}_{\lambda}\right)$ of all compact
 operators acting on some Hilbert space $\mathscr{H}_{\lambda}$; see  \cite[Theorem 1.4.5]{Arveson}.
 
 Assume that $\mathfrak{A}$ is a $C^*$-algebra.
 The unit of a unital $C^*$-algebra $\mathfrak{A}$ is denoted by $1_{\mathfrak{A}}$.
 A positive linear functional on $\mathfrak{A}$ with norm one is said to be a \emph{state}.  A state $\tau$ is called \emph{pure} if for each positive linear functional $\rho$ on $\mathfrak{A}$ with $\rho\leq\tau$, there exists $0\leq t\leq1$ such that $\rho=t\tau$.  The set of all states and pure states on $\mathfrak{A}$ are denoted by $\mathcal{S}\left(\mathfrak{A}\right)$ and $\mathcal{P}\left( \mathfrak{A}\right)$, respectively. If $\mathfrak{A}$ is commutative, we observe that $\mathcal{P}\left(\mathfrak{A}\right)=\Omega\left(\mathfrak{A}\right)$, where $\Omega\left( \mathfrak{A}\right)$ is the character space of $\mathfrak{A}$ equipped with the weak$\rm{^*}$ topology.

 Throughout this paper, suppose that $\mathscr{E}$ be a (right) \emph{Hilbert $\mathfrak{A}$-module} with the inner product $\langle \cdot, \cdot\rangle_{\mathscr{E}}:\mathscr{E}\times\mathscr{E}\rightarrow\mathfrak{A}$. We denote this inner product by $\langle \cdot,\cdot\rangle$ if there is no ambiguity.  For example, a $C^*$-algebra $\mathfrak{A}$ is a Hilbert $\mathfrak{A}$-module if we define $\left\langle a,b\right\rangle=a^*b$ for all $a,b\in\mathfrak{A}$. In addition, if $\mathscr{H}$ and $\mathscr{K}$ are Hilbert spaces, then the space $\mathbb{B}\left(\mathscr{H},\mathscr{K}\right)$ is a Hilbert $\mathbb{B}\left(\mathscr{H}\right)$-module under the inner product $\left\langle T,S\right\rangle=T^*S$ for $T,S\in\mathbb{B}\left(\mathscr{H},\mathscr{K}\right)$. 	Set $\left|x\right|:=\left\langle x, x \right\rangle^{1/2}$ for each $x\in\mathscr{E}$. 
 Let $\left(\mathscr{E}_{\lambda}\right)_{\lambda \in \Lambda}$ be a family of Hilbert $\mathfrak{A}$-modules. The direct sum $\oplus_{\lambda\in\Lambda}\mathscr{E}_{\lambda}$ is the Hilbert $\mathfrak{A}$-module consisting of all $(x_\lambda)$ such that $\sum_{\lambda \in \Lambda} \left\langle x_\lambda,x_\lambda\right\rangle$ converges in $\mathfrak{A}$ under the norm-topology. This $\mathfrak{A}$-module equipped with the inner product $$\left\langle(x_\lambda),(y_\lambda)\right\rangle=\sum_{\lambda \in \Lambda} \left\langle x_\lambda,y_\lambda\right\rangle, \qquad (x_\lambda),(y_\lambda)\in\oplus_{\lambda\in\Lambda}\mathscr{E}_{\lambda}$$
 is a Hilbert $C^*$-module. The direct sum of $n$-copies of  $\mathscr{E}$ is denoted by $\mathit{l}_n^2(\mathscr{E})$.
 
 Let $\tau\in\mathcal{S}\left(\mathfrak{A}\right)$. Set $\mathcal{N}^{\mathscr{E}}_{\tau}:=\left\lbrace x\in\mathscr{E}:\tau\left( \left\langle x,x\right\rangle_{\mathscr{E}}\right) =0 \right\rbrace.$
 Consider the scalar-valued inner product $\langle\cdot,\cdot\rangle:\mathscr{E}/\mathcal{N}^{\mathscr{E}}_{\tau}\times\mathscr{E}/\mathcal{N}^{\mathscr{E}}_{\tau}\rightarrow\mathbb{C}$ given by $$\left\langle x+\mathcal{N}^{\mathscr{E}}_{\tau},y+\mathcal{N}^{\mathscr{E}}_{\tau}\right\rangle_{\mathscr{E}_{\tau}}=\tau\left( \left\langle x,y\right\rangle_{\mathscr{E}}\right).$$
 The Hilbert completion of ${\mathscr{E}}/{\mathcal{N}^{\mathscr{E}}_{\tau}}$ is denoted by $\mathscr{E}_{\tau}$. If $\mathfrak{A}$ is considered as a Hilbert $C^*$-module over itself and $\tau\in\mathcal{P}\left(\mathfrak{A}\right)$, Kadison \cite{Kadison} used his transitivity theorem to establish that $\mathfrak{A}_{\tau}={\mathfrak{A}}/{\mathcal{N}^{\mathfrak{A}}_{\tau}}$; in other words, ${\mathfrak{A}}/{\mathcal{N}^{\mathfrak{A}}_{\tau}}$ is already complete.
 
 Let $\mathscr{E}$ and $\mathscr{F}$ be Hilbert $\mathfrak{A}$-modules. A map $T: \mathscr{E}\rightarrow \mathscr{F}$ is said to be \emph{adjointable} if there exists a map $T^*: \mathscr{F}\rightarrow \mathscr{E}$ such that $\left\langle Tx,y \right\rangle=\left\langle x,T^*y \right\rangle$ for all $x \in \mathscr{E}$ and $y \in \mathscr{F}$. The space of all adjointable maps from $\mathscr{E}$ to $\mathscr{F}$ is denoted by $\mathcal{L}(\mathscr{E},\mathscr{F})$. For $x \in \mathscr{E}$ and $y \in \mathscr{F}$, we define ${\theta}_{y,x}:\mathscr{E} \rightarrow \mathscr{F}$ by ${\theta}_{y,x}(z)\mapsto y\left\langle x,z\right\rangle$ for $z \in \mathscr{E}$. It follows immediately that $\left( {\theta}_{y,x}\right)^*={\theta}_{x,y}$. So ${\theta}_{y,x} \in \mathcal{L}(\mathscr{E},\mathscr{F})$. The norm-closed linear span of $\left\lbrace {\theta}_{y,x}: x \in \mathscr{E}, y \in \mathscr{F} \right\rbrace $ is denoted by $\mathcal{K}(\mathscr{E},\mathscr{F})$. We use the notations $\mathcal{L}(\mathscr{E})$ and $\mathcal{K}(\mathscr{E})$
for $\mathcal{L}(\mathscr{E},\mathscr{E})$ and $\mathcal{K}(\mathscr{E},\mathscr{E})$, respectively. Following \cite[p. 8]{lance}, they are  $C^*$-algebras. 
 
Let us recall some facts about representations of Hilbert $C^*$-modules from \cite{representation}. Suppose that $\mathscr{E}$ is a Hilbert $\mathfrak{A}$-module and $\mathscr{F}$ is a Hilbert $\mathfrak{B}$-module. Let $\phi:\mathfrak{A}\rightarrow\mathfrak{B}$ be a $*$-homomorphism. 
 A map $\Phi:\mathscr{E}\rightarrow\mathscr{F}$
is called a \emph{$\phi$-morphism} if for $x,y\in\mathscr{E}$, we have $\left\langle\Phi(x),\Phi(y)\right\rangle=\phi\left(\left\langle x,y\right\rangle\right)$. A $\phi$-morphism $\Phi:\mathscr{E}\rightarrow\mathbb{B}\left(\mathscr{H},\mathscr{K}\right) $, where $\phi:\mathfrak{A}\rightarrow\mathbb{B}\left(\mathscr{H}\right)$ is a representation of $\mathfrak{A}$, is said to be a \emph{representation of the Hilbert $C^*$-module} $\mathscr{E}$. Let $\mathscr{H}_1\leq\mathscr{H}$ and $\mathscr{K}_1\leq\mathscr{K}$ be closed subspaces. A pair $\left(\mathscr{H}_1,\mathscr{K}_1\right)$ is called \emph{$\Phi$-invariant} if $\Phi\left(\mathscr{E}\right)\left(\mathscr{H}_1 \right)\subseteq \mathscr{K}_1 $ and $\Phi\left(\mathscr{E}\right)^*\left(\mathscr{K}_1\right)\subseteq\mathscr{H}_1$. The representation $\Phi$ is called \emph{irreducible} if $\left(0,0\right)$ and $\left(\mathscr{H},\mathscr{K}\right)$ are the only $\Phi$-invariant pairs.

 
 Let $\tau\in\mathcal{S}\left(\mathfrak{A}\right)$. Set $\mathcal{N}^{\mathscr{E}}_{\tau}:=\left\lbrace x\in\mathscr{E}:\tau\left( \left\langle x,x\right\rangle_{\mathscr{E}}\right) =0 \right\rbrace.$
 Consider the scalar-valued inner product $\langle\cdot,\cdot\rangle:\mathscr{E}/\mathcal{N}^{\mathscr{E}}_{\tau}\times\mathscr{E}/\mathcal{N}^{\mathscr{E}}_{\tau}\rightarrow\mathbb{C}$ given by $$\left\langle x+\mathcal{N}^{\mathscr{E}}_{\tau},y+\mathcal{N}^{\mathscr{E}}_{\tau}\right\rangle_{\mathscr{E}_{\tau}}=\tau\left( \left\langle x,y\right\rangle_{\mathscr{E}}\right).$$
 The Hilbert completion of ${\mathscr{E}}/{\mathcal{N}^{\mathscr{E}}_{\tau}}$ is denoted by $\mathscr{E}_{\tau}$. If $\mathfrak{A}$ is considered as a Hilbert $C^*$-module over itself and $\tau\in\mathcal{P}\left(\mathfrak{A}\right)$, Kadison \cite{Kadison} used his transitivity theorem to establish that $\mathfrak{A}_{\tau}={\mathfrak{A}}/{\mathcal{N}^{\mathfrak{A}}_{\tau}}$; in other words, ${\mathfrak{A}}/{\mathcal{N}^{\mathfrak{A}}_{\tau}}$ is already complete.
 
Inspired by \cite{lance}, we introduce the notion of \emph{interior tensor product}. Suppose that $\mathscr{E}$ is a Hilbert $\mathfrak{A}$-module and $\mathscr{F}$ is a Hilbert $\mathfrak{B}$-module. Let $\phi: \mathfrak{A}\rightarrow\mathcal{L}(\mathscr{F})$ be a $*$-homomorphism of $C^*$-algebras. The algebraic tensor product $\mathscr{E}\otimes_{alg}\mathscr{F}$ is a right $\mathfrak{B}$-module by setting $\left(x\otimes y\right)b=x\otimes yb $. The subspace of $\mathscr{E}\otimes_{alg}\mathscr{F}$ generated by elements of the form $xa\otimes y-x\otimes \phi(a)y$, where $x \in \mathscr{E}$, $y \in \mathscr{F}$ and $a\in\mathfrak{A}$, is denoted by $\mathcal{M}_{\phi}$. Letting
\begin{align}\label{msm1}
	\left\langle x_1\otimes y_1,x_2\otimes y_2 \right\rangle_{\phi}:=\left\langle y_1,\phi(\left\langle x_1,x_2\right\rangle)y_2 \right\rangle_{\mathscr{F}},
\end{align}
one gets a semi-inner product on $\mathscr{E}\otimes_{alg}\mathscr{F}$ by extending through $\mathfrak{B}$-linearity. Drawing inspiration from \cite[p. 41]{lance}, $$\mathcal{M}_{\phi}=\left\lbrace u\in \mathscr{E}\otimes_{alg}\mathscr{F}: \left\langle u,u\right\rangle_{\phi}=0\right\rbrace.$$ Consequently, an inner product can be defined on $\left( \mathscr{E}\otimes_{alg}\mathscr{F}\right) /{\mathcal{M}_{\phi}}$. The completion of this inner product $\mathfrak{B}$-module forms a Hilbert $\mathfrak{B}$-module, known as the \emph{interior tensor product} $\mathscr{E}\otimes_{\phi}\mathscr{F}$. 
	Moreover, the map $\mathcal{L}\left(\mathscr{E}\right)\rightarrow \mathcal{L}\left(\mathscr{E}\otimes_{\phi}\mathscr{F}\right)$ defined by $T\mapsto T_{\mathscr{E}\otimes_{\phi}\mathscr{F}}$, is a $*$-homomorphism, where 
\begin{equation}\label{T}
	\left(T_{\mathscr{E}\otimes_{\phi}\mathscr{F}}\right)\left(x\otimes y+\mathcal{M}_{\phi}\right)=Tx\otimes y+\mathcal{M}_{\phi}\quad\left(T\in\mathcal{L}\left(\mathscr{E}\right),x\in\mathscr{E}, y\in\mathscr{F}\right).
\end{equation}
  In the first section, we give a variant of Kadison's result $\mathfrak{A}_{\tau}={\mathfrak{A}}/{\mathcal{N}^{\mathfrak{A}}_{\tau}}$. This is a consequence of the well-known Kadison transitivity theorem \cite{Kadison}, applied to specific classes of Hilbert $C^*$-modules. We use the notion of interior tensor product to introduce a helpful Hilbert space and a Hilbert $C^*$-module as follows:
  
(i) For $\tau\in\mathcal{S}\left(\mathfrak{A}\right)$, the $*$-representation $\phi_{\tau}:\mathfrak{A}\rightarrow\mathbb{B}\left(\mathfrak{A}_{\tau}\right)$ is defined by $$\phi_{\tau}(a)\left(b+\mathcal{N}^{\mathfrak{A}}_{\tau}\right)=ab+\mathcal{N}^{\mathfrak{A}}_{\tau}.$$
Set $\mathcal{E}_{\tau}:=\mathscr{E}\otimes_{\phi_{\tau}}\mathfrak{A}_{\tau}$. It follows from \eqref{msm1} that $\mathcal{E}_{\tau}$ is indeed a Hilbert space. Inspired by \cite[p. 10]{sheide}, we can construct the representation $\Phi_{\tau}:\mathscr{E}\rightarrow\mathbb{B}\left(\mathfrak{A}_{\tau},\mathcal{E}_{\tau}\right)$ by
 \begin{align}
 	\Phi_{\tau}(x)h=x\otimes h+\mathcal{M}_{\phi_{\tau}}\quad\left(x\in\mathscr{E}, h\in\mathfrak{A}_{\tau}\right). 
 \end{align}
For example, let $\mathscr{E}=\mathscr{H}$. Note that the only state on $\mathbb{C}$ is the identity function. Thus, $\mathcal{E}_{\tau}=\mathscr{H}$ and the representation $\Phi_{\tau}:\mathscr{H}\rightarrow\mathbb{B}\left(\mathbb{C},\mathscr{H}\right)$ is defined by $\Phi_{\tau}(x)\alpha=\alpha x$, where $x\in\mathscr{H}$ and $\alpha\in\mathbb{C}$. 

We show that $\mathscr{E}_{\tau}$ is a closed subspace of $\mathcal{E}_{\tau}$. In the case when $\tau\in\mathcal{P}\left(\mathfrak{A}\right)$, we have $\mathcal{E}_{\tau}\cong\mathscr{E}_{\tau}$ and
 $\Phi_{\tau}$ is irreducible.
 Consider a $\phi$-morphism $\Phi:\mathscr{E}\rightarrow\mathscr{F}$. We infer that $\Phi\left(\mathscr{E}\right)$ is a Hilbert $\phi(\mathfrak{A})$-module. Thus, it is closed in $\mathscr{F}$. By using this fact, we derive that 
 $\mathscr{E}_{\tau}={\mathscr{E}}/{\mathcal{N}^{\mathscr{E}}_{\tau}}$ in the case where $\mathfrak{A}$ is commutative and $\tau\in\mathcal{P}\left(\mathfrak{A}\right)$.
  
 (ii) Let $\mathscr{E}$ be a Hilbert $\mathfrak{A}$-module. The inclusion $\iota:\mathfrak{A}\hookrightarrow\mathfrak{A}^{**}$, where $\mathfrak{A}^{**}$ represents the bidual of $\mathfrak{A}$, can be viewed as a $*$-homomorphism. We set $\mathscr{E}^{\sharp}:=\mathscr{E}\otimes_{\iota}\mathfrak{A}^{**}$ and treat it as a Hilbert $C^*$-module. This module is commonly referred as the \emph{extension of $\mathscr{E}$ by the algebra $\mathfrak{A}^{**}$}. For instance, the extension of $\mathfrak{A}$ by the algebra $\mathfrak{A}^{**}$ is $\mathfrak{A}^{**}$. 
 
 For $\tau\in\mathcal{P}\left(\mathfrak{A}\right)$, we demonstrate that there exists a minimal projection $p_{\tau}\in\mathfrak{A}^{**}$ such that 
 $\mathcal{N}^{\mathscr{E}}_{\tau}=\left\lbrace x\in\mathscr{E}: xp_{\tau}=0 \right\rbrace$.
 Moreover, the map $\mathscr{E}/\mathcal{N}^{\mathscr{E}}_{\tau}\rightarrow\mathscr{E}p_{\tau}$ defined by $x+\mathcal{N}^{\mathscr{E}}_{\tau}\mapsto xp_{\tau}$ is an isometric isomorphism. This result yields 
 $\mathscr{E}_{\tau}={\mathscr{E}}/{\mathcal{N}^{\mathscr{E}}_{\tau}}$, whenever $\mathfrak{A}=\Sigma_{\lambda\in\Lambda}\mathbb{K}\left(\mathscr{H}_{\lambda}\right)$. 
 
In the second section,   we introduce two quantities corresponding to Hilbert space Schatten $k$-norms for $k\geq1$ in the context of Hilbert $C^*$-modules. In \cite[Proposition 3.4(1)]{sajjad}, it is presented a new version of the power-norms $\left(\mu_{2,n}:n\in\mathbb{N}\right)$, introduced by Dales \cite[p. 64]{2012}, appropriate for a Hilbert $\mathfrak{A}$-module $\mathscr{E}$ as follows:
 \begin{equation}\label{mue}
 	\mu^{\mathscr{E}}_{n}(x_1,\dots,x_n)=\sup_{y\in\mathscr{E}_{[1]}}\left\|\left( \sum_{i=1}^{n}\left|\left\langle x_i,y\right\rangle\right|^2\right)^{1/2}\right\|,
 \end{equation}
 where $\mathscr{E}_{[1]}$ is the norm-closed unit ball of $\mathscr{E}$; see also \cite{sajjad3}. Moreover, inspired by the proofs of \cite[Proposition 3.6]{sajjad} and \cite[Lemma 3.7]{sajjad},
 \begin{equation}\label{Proposition 3.6}
 	\mu^{\mathscr{E}}_{n}(x_1,\dots,x_n)=\min\left\{\lambda>0: \sum_{i=1}^{n} \left\langle y,x_i\right\rangle\left\langle x_i,y\right\rangle\leq\lambda^2\left\langle y,y\right\rangle\textrm{~for~all~} y\in\mathscr{E}\right\};
 \end{equation}
 \begin{equation}\label{Lemma3.7}
 	\mu^{\mathscr{E}}_{n}(x_1,\dots,x_n)=\sup\left\lbrace\left\| \sum_{i=1}^{n}x_ia_i\right\| : a_1,\dots,a_n\in\mathfrak{A},\left\|\sum_{i=1}^{n}a_i^*a_i \right\|\leq 1\right\rbrace;
 \end{equation}
 \begin{equation}\label{Proposition 3.4(1)}
 	\mu^{\mathscr{E}}_{n}(x_1a_1,\dots,x_na_n)\leq\max_{1\leq i\leq n}\left\|a_i\right\|\mu^{\mathscr{E}}_{n}(x_1,\dots,x_n)\quad\left( a_1,\dots,a_n\in\mathfrak{A}\right).
 \end{equation}
 We refer the readers to \cite{DALMOS, bbb, hilbert, 2017} for more information about power-norms and multi-norms. Now, we define two quantities as follows: 
\begin{align*}
		&\pi^{\mathscr{E}}_k(T): = \sup\left\lbrace \left\|\sum_{i=1}^n \left\langle\left|T\right|^kx_i,x_i\right\rangle\right\|^{1/k}: \left( x_1,\ldots,x_n\right) \in\left( \mathscr{E}^n,\mu^{\mathscr{E}}_n\right)_{[1]}, n\in\mathbb{N}\right\rbrace;\\&\left\|T\right\|_{\left[ k\right]}:=\sup_{\tau\in\mathcal{P}\left( \mathfrak{A}\right)}\left\|T_{\mathscr{E}_\tau}\right\|_{\left(k\right)}\tag{$T\in\mathcal{L}(\mathscr{E})$ and $\left\|T_{\mathscr{E}_\tau}\right\|_{\left( k\right) }=\left(\mathrm {Tr}~ \left| T_{\mathscr{E}_\tau}\right| ^{k}\right)^{\frac{1}{k}}$}, 
	\end{align*}
where $T_{\mathscr{E}_\tau}\in\mathbb{B}\left( \mathscr{E}_\tau\right)$, according to \eqref{T}, is defined by $
	T_{\mathscr{E}_\tau}\left(x+\mathcal{N}^{\mathscr{E}}_{\tau}\right)=Tx+\mathcal{N}^{\mathscr{E}}_{\tau}$.

Jaegermann \cite[Proposition 10.1]{Jaegermann} proved that $\pi^{\mathscr{E}}_k(T)=\left\|T\right\|_{\left( k\right)}$ for all $T\in\mathbb{B}(\mathscr{H})$; see also our Proposition \ref{Jaegermann}. These quantities, along with their properties, provide new insights into the realm of $C^*$-algebras.

The set of adjointable operators with $	\pi^{\mathscr{E}}_k(T) < \infty$ is denoted by ${\Pi}_k(\mathscr{E})$.
We will discuss in Remark \ref{remark} that $\left({\Pi}_{k}(\mathscr{E}),{\pi}^{\mathscr{E}}_{k}(\cdot)\right) $ is a Banach space when $\mathfrak{A}$ is commutative. In general, we demonstrate that it is a Banach space for $k=1,2$ and it is unknown whether this holds for other values of $k$. Furthermore, most Schatten properties may not hold for $\left({\Pi}_{k}(\mathscr{E}),{\pi}^{\mathscr{E}}_{k}(\cdot)\right)$. Our aim is to precisely explore the Schatten properties of $\left({\Pi}_{k}(\mathscr{E}),{\pi}^{\mathscr{E}}_{k}(\cdot)\right) $.
 We prove that 	$\mu^{\mathscr{E}}_{n}(x_1,\dots,x_n)=\mu^{\mathscr{E}^{\sharp}}_{n}(x_1,\dots,x_n)$ for $x_1,\dots,x_n\in\mathscr{E}$. Therefore, $\pi^{\mathscr{E}}_k(T)\leq\pi^{\mathscr{E}^{\sharp}}_k(T_{\mathscr{E}^{\sharp}})$ for every $T\in\mathcal{L}\left(\mathscr{E}\right)$, where the operator $T_{\mathscr{E}^{\sharp}}$ has been constructed at \eqref{T} by considering $\mathscr{E}^{\sharp}=\mathscr{E}\otimes_{\iota}\mathfrak{A}^{**}$.
 
Suppose that $\tau\in\mathcal{S}(\mathfrak{A})$ and $k\geq1$. We show that $\tau(a)^k\leq\tau(a^k)$ for each $a\geq 0$, and infer that $\pi^{\mathscr{E}}_k(T)\leq\left\|T\right\|_{\left[k\right]}\leq{\pi}^{\mathscr{E}^{\sharp}}_k(T_{\mathscr{E}^{\sharp}})$. Moreover, we establish that if $\mathscr{E}$ is a Hilbert $C^*$-module over a commutative $C^*$-algebra with a frame or if $\mathscr{E}$ is a Hilbert $C^*$-module over a $C^*$-algebra of compact operators, then $\pi^{\mathscr{E}}_k(T)=\left\|T\right\|_{\left[k\right]}={\pi}^{\mathscr{E}^{\sharp}}_k(T_{\mathscr{E}^{\sharp}})$ is valid. We demonstrate that $\pi^{\mathscr{E}}_k(\cdot)$ enjoys most Schatten properties that Kittaneh \cite{Kittaneh5} has investigated in the framework of Hilbert spaces.
 
 \section{when is ${\mathscr{E}}/{\mathcal{N}^{\mathscr{E}}_{\tau}}$ complete?}
As mentioned earlier, it is known that $\mathfrak{A}_{\tau}={\mathfrak{A}}/{\mathcal{N}^{\mathfrak{A}}_{\tau}}$; see \cite[Corollary 1]{Kadison}. Following, we attempt to identify certain classes of Hilbert $C^*$-modules $\mathscr{E}$ satisfying $\mathscr{E}_{\tau}={\mathscr{E}}/{\mathcal{N}^{\mathscr{E}}_{\tau}}$.
 \begin{proposition}\label{oplus}
 	Let $\tau\in\mathcal{P}\left(\mathfrak{A}\right)$, let $\left(\mathscr{E}^{\lambda}\right)_{\lambda \in \Lambda}$ be a family of Hilbert $\mathfrak{A}$-modules such that $\mathscr{E}^{\lambda}_{\tau}={\mathscr{E}}^{\lambda}/{\mathcal{N}^{\mathscr{E}^{\lambda}}_{\tau}}$, and let $\mathscr{E}:=\oplus_{\lambda\in\Lambda}\mathscr{E}^{\lambda}$. Then 	$\mathscr{E}_{\tau}={\mathscr{E}}/{\mathcal{N}^{\mathscr{E}}_{\tau}}$.
 \end{proposition}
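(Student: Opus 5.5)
We need to show $\mathscr{E}/\mathcal{N}^{\mathscr{E}}_\tau$ is complete under the norm $\|x+\mathcal{N}\|_\tau=\tau(\langle x,x\rangle)^{1/2}$. Our plan is to identify this pre-Hilbert space with the Hilbert direct sum $\bigoplus^{\ell^2}_{\lambda}\mathscr{E}^\lambda_\tau$, using the hypothesis that each summand is already a Hilbert space.

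First define a map
$$V:\mathscr{E}/\mathcal{N}^{\mathscr{E}}_\tau\to\bigoplus^{\ell^2}_{\lambda}\mathscr{E}^\lambda_\tau,\qquad V\bigl((x_\lambda)+\mathcal{N}^{\mathscr{E}}_\tau\bigr)=\bigl(x_\lambda+\mathcal{N}^{\mathscr{E}^\lambda}_\tau\bigr)_\lambda .$$
Since $\sum_\lambda\langle x_\lambda,x_\lambda\rangle$ converges in norm and $\tau$ is continuous and positive, we get
$$\tau(\langle x,x\rangle)=\sum_\lambda\tau(\langle x_\lambda,x_\lambda\rangle).$$
Hence $V$ is well defined and isometric, and it maps into the $\ell^2$-sum. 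Its image contains all finitely supported families, so it is dense.

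It remains to show that $V$ is surjective; completeness then follows because an isometric image of the whole Hilbert space is complete. Take $(\xi_\lambda)$ in the $\ell^2$-sum. By hypothesis, $\xi_\lambda=x_\lambda+\mathcal{N}^{\mathscr{E}^\lambda}_\tau$ for some $x_\lambda\in\mathscr{E}^\lambda$, with $\sum_\lambda\tau(\langle x_\lambda,x_\lambda\rangle)<\infty$. The obstacle is that $\sum_\lambda\langle x_\lambda,x_\lambda\rangle$ need not converge in $\mathfrak{A}$. We must therefore re-choose the representatives $x_\lambda$ so that $\|x_\lambda\|$ is controlled by $\|\xi_\lambda\|$. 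Concretely, we want $x_\lambda$ with
$$\|x_\lambda\|\le 2\|\xi_\lambda\|_{\mathscr{E}^\lambda_\tau},$$
which makes $\sum_\lambda\|\langle x_\lambda,x_\lambda\rangle\|\le 4\sum_\lambda\|\xi_\lambda\|^2<\infty$. The series then converges absolutely, so $(x_\lambda)\in\mathscr{E}$ and $V((x_\lambda))=(\xi_\lambda)$.

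To get this control on the representatives, we argue via the open mapping theorem. The quotient map $q_\lambda:\mathscr{E}^\lambda\to\mathscr{E}^\lambda_\tau$ is a bounded surjection between Banach spaces, because $\tau(\langle x,x\rangle)\le\|x\|^2$ and $\mathscr{E}^\lambda_\tau$ is complete by hypothesis. So it is open, but the constant $C_\lambda$ it provides could depend on $\lambda$.

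To obtain a uniform constant, the first thing to try is a direct modification of representatives using the GNS structure of the pure state $\tau$. Let $u_\tau$ be a cyclic vector in $\mathfrak{A}_\tau$ with $\tau(a)=\langle \phi_\tau(a)u_\tau,u_\tau\rangle$. By Kadison transitivity, for any $x$ we can pick $e\in\mathfrak{A}$ with $\|e\|\le 1+\varepsilon$ that acts suitably on the relevant vectors. Replacing $x$ by $xe$ then keeps the class modulo $\mathcal{N}_\tau$ but gives $\|xe\|^2=\|e^*\langle x,x\rangle e\|$. The aim is to make this at most about $\tau(\langle x,x\rangle)$, by choosing $e$ so that it compresses $\langle x,x\rangle$ toward the rank-one part determined by $\tau$; this is the analogue of Kadison's argument that $\mathfrak{A}/\mathcal{N}_\tau$ is complete. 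I expect justifying this uniform norm bound, namely that every class has a representative of norm at most $(1+\varepsilon)\|\xi\|_\tau$, to be the main step.

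If the transitivity route is hard, the fallback is to prove a uniform open-mapping constant directly. Suppose the constants $C_\lambda$ were unbounded. Then choose a sequence $\lambda_n$ and classes $\xi_n$ with $\|\xi_n\|\le 2^{-n}$ whose minimal representative norms stay large. The family $(\xi_n)$ lies in the $\ell^2$-sum, and the hypothesis applied to a suitable $\mathfrak{A}$-module built from these summands then gives a contradiction via a Baire category argument. The transitivity approach is preferred.
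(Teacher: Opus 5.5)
Your map $V$ and the isometry computation are exactly the paper's. The paper then declares the map surjective ``by the assumption'' and stops. You are right that this is not automatic. With arbitrary representatives $x_\lambda$ of $\xi_\lambda$, the series $\sum_\lambda\langle x_\lambda,x_\lambda\rangle$ need not converge in norm, so you need representatives with $\|x_\lambda\|\le C\|\xi_\lambda\|$ for a constant $C$ independent of $\lambda$.

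However, your proposal leaves exactly this step unproved, and neither route you sketch works as stated.
\begin{itemize}
\item Kadison's transitivity theorem bounds $\|e\|$ and prescribes $\phi_\tau(e)$ on finitely many vectors of $\mathfrak{A}_\tau$. But $\|xe\|^2=\|e^*\langle x,x\rangle e\|$ is the $C^*$-norm, which sees every representation of $\mathfrak{A}$, not only $\phi_\tau$. For example, elements $e$ far out in an approximate unit satisfy $\|e\|\le1$ and $\phi_\tau(e)h_\tau\approx h_\tau$, yet $\|e^*\langle x,x\rangle e\|\approx\|\langle x,x\rangle\|$. Making ``compress $\langle x,x\rangle$ toward the rank-one part'' rigorous would need something like the Akemann--Anderson--Pedersen excision theorem, not transitivity as you use it.
\item Your fallback is circular. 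The only module built from the summands whose completeness would contradict unbounded constants $C_\lambda$ is the direct sum itself, which is the conclusion.
\end{itemize}

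The missing idea is to replace a factor of $x$ by a norm-controlled lift, rather than multiplying $x$ by some $e$. Take $\xi=x+\mathcal{N}^{\mathscr{E}}_{\tau}\neq0$, put $a=\langle x,x\rangle$, and fix $0<\alpha<1/2$.
\begin{enumerate}
\item By Lance's Lemma 4.4 (already used in the paper), $x=wa^{\alpha}$ with $w\in\mathscr{E}$ and $\|w\|\le\|x\|^{1-2\alpha}$.
\item Since $\mathfrak{A}/\mathcal{N}^{\mathfrak{A}}_{\tau}$ is complete (Kadison), the open mapping theorem gives a constant $C$ depending only on $\tau$. (Transitivity with norm control gives $C=1+\varepsilon$.) It also gives $b\in\mathfrak{A}$ with $b-a^{\alpha}\in\mathcal{N}^{\mathfrak{A}}_{\tau}$ and $\|b\|\le C\,\tau(a^{2\alpha})^{1/2}$.
\item Set $y=wb$. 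Then $\tau(\langle y-x,y-x\rangle)=\tau((b-a^{\alpha})^*\langle w,w\rangle(b-a^{\alpha}))=0$, because this element is dominated by $\|w\|^2(b-a^{\alpha})^*(b-a^{\alpha})$. So $y$ represents $\xi$.
\item Moreover $\|y\|\le C\|x\|^{1-2\alpha}\tau(a^{2\alpha})^{1/2}$, which tends to $C\|\xi\|$ as $\alpha\to1/2$.
\end{enumerate}
With this uniform lifting, $\sum_\lambda\|\langle x_\lambda,x_\lambda\rangle\|\le 4C^2\sum_\lambda\|\xi_\lambda\|^2<\infty$, and your argument closes.

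The same lifting lemma, applied to an absolutely convergent series in $\mathscr{E}/\mathcal{N}^{\mathscr{E}}_{\tau}$, shows that this quotient is complete for every Hilbert $\mathfrak{A}$-module and every pure state $\tau$. So the hypothesis on the summands is in fact superfluous.
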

 \begin{proof}
 	Define the map $\mathscr{E}/{\mathcal{N}^{\mathscr{E}}_{\tau}}\rightarrow\oplus_{\lambda\in\Lambda}\mathscr{E}^{\lambda}_{\tau} $ by
 	\begin{equation}
 		\left(x_\lambda\right)+{\mathcal{N}^{\mathscr{E}}_{\tau}}\mapsto \left(x_\lambda+\mathcal{N}^{\mathscr{E}^{\lambda}}_{\tau}\right)_{\lambda \in \Lambda}.
 	\end{equation}
 	Since
 	\begin{align*}
 		\left\|\left(x_\lambda\right)+{\mathcal{N}^{\mathscr{E}}_{\tau}}\right\|^2&= \tau\left(\left\langle\left(x_\lambda\right),\left(x_\lambda\right)\right\rangle \right)=\sum_{\lambda \in \Lambda} \tau\left( \left\langle x_\lambda,x_\lambda\right\rangle_{\mathscr{E}^{\lambda}}\right)\\&=\sum_{\lambda} \left\|x_\lambda+\mathcal{N}^{\mathscr{E}^{\lambda}}_{\tau}\right\|^2=\left\|\left(x_\lambda+\mathcal{N}^{\mathscr{E}^{\lambda}}_{\tau}\right)_{\lambda \in \Lambda}\right\|^2, 
 	\end{align*}
 	the above map is isometric. By the assumption, $\mathscr{E}^{\lambda}_{\tau}=\mathscr{E}^{\lambda}/{\mathcal{N}^{\mathscr{E}^{\lambda}}_{\tau}}$. Hence, the map is surjective. Thus, ${\mathscr{E}}/{\mathcal{N}^{\mathscr{E}}_{\tau}}=\oplus_{\lambda\in\Lambda}\mathscr{E}^{\lambda}_{\tau} =\mathscr{E}_{\tau}$.
 \end{proof}
 \begin{corollary}
 	Let $\tau\in\mathcal{P}\left(\mathfrak{A}\right)$ and $\mathscr{E}=\mathit{l}_n^2(\mathfrak{A})$ for some $n$. Then $\mathscr{E}_{\tau}={\mathscr{E}}/{\mathcal{N}^{\mathscr{E}}_{\tau}}$. 
 \end{corollary}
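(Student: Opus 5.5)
The plan is to obtain the corollary as the special case of Proposition \ref{oplus} in which the index set is finite and every summand equals $\mathfrak{A}$. Take $\Lambda=\{1,\dots,n\}$ and $\mathscr{E}^{\lambda}=\mathfrak{A}$ for each $\lambda$, with $\mathfrak{A}$ regarded as a Hilbert $\mathfrak{A}$-module under $\langle a,b\rangle=a^*b$. Then, by the definition recalled in the preliminaries, $\oplus_{\lambda\in\Lambda}\mathscr{E}^{\lambda}$ is exactly $\mathit{l}_n^2(\mathfrak{A})$. Since there are only finitely many summands, the convergence requirement $\sum_{i}\langle x_i,x_i\rangle$ in the definition of the direct sum is automatic. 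So the two modules coincide as sets, and their inner products agree as well.

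The only hypothesis of Proposition \ref{oplus} left to check is that $\mathscr{E}^{\lambda}_{\tau}={\mathscr{E}}^{\lambda}/{\mathcal{N}^{\mathscr{E}^{\lambda}}_{\tau}}$ for each $\lambda$. With $\mathscr{E}^{\lambda}=\mathfrak{A}$, this reads $\mathfrak{A}_{\tau}=\mathfrak{A}/\mathcal{N}^{\mathfrak{A}}_{\tau}$ for a pure state $\tau$. That is precisely Kadison's result \cite[Corollary 1]{Kadison}, recalled in the introduction, and it applies because $\tau\in\mathcal{P}(\mathfrak{A})$ by assumption. Proposition \ref{oplus} then gives $\mathscr{E}_{\tau}=\mathscr{E}/\mathcal{N}^{\mathscr{E}}_{\tau}$ at once.

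There is no real obstacle here. The substantive input, namely completeness of $\mathfrak{A}/\mathcal{N}^{\mathfrak{A}}_{\tau}$, comes from Kadison's transitivity theorem. The surjectivity argument is already inside the proof of Proposition \ref{oplus}: it is an isometric identification with the Hilbert space direct sum $\oplus_{i=1}^n\mathfrak{A}_{\tau}$, which is complete.
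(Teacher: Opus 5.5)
Your proposal is correct and is exactly the paper's proof. The paper obtains the corollary from Proposition \ref{oplus} with $\Lambda=\{1,\dots,n\}$ and every summand equal to $\mathfrak{A}$, where the hypothesis on the summands is Kadison's result $\mathfrak{A}_{\tau}=\mathfrak{A}/\mathcal{N}^{\mathfrak{A}}_{\tau}$ for pure $\tau$; because the index set is finite, the surjectivity step inside Proposition \ref{oplus} is immediate, since any $n$-tuple of representatives already lies in $\mathit{l}_n^2(\mathfrak{A})$.
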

 \begin{proof}
 	It follows immediately from Proposition \ref{oplus} and the fact that $\mathfrak{A}_{\tau}={\mathfrak{A}}/{\mathcal{N}^{\mathfrak{A}}_{\tau}}$.
 \end{proof}

Consider the $*$-representation $\phi_{\tau}:\mathfrak{A}\rightarrow\mathbb{B}\left(\mathfrak{A}_{\tau}\right)$. By \cite[Theorem 5.1.1]{mur}, there exists a unique vector $h_{\tau}\in\mathfrak{A}_{\tau}$ satisfying
\begin{equation}\label{sajjad1}
	\tau(a)=\left\langle h_{\tau},\phi_{\tau}(a)h_{\tau}\right\rangle \quad\left(a\in\mathfrak{A}\right).
\end{equation}
Moreover, $h_{\tau}$ is a unit vector and $\phi_{\tau}(a)h_{\tau}=a+{\mathcal{N}^{\mathfrak{A}}_{\tau}}$. We can construct the representation $\Phi_{\tau}:\mathscr{E}\rightarrow\mathbb{B}\left(\mathfrak{A}_{\tau},\mathcal{E}_{\tau}\right)$ by
\begin{align}
	\Phi_{\tau}(x)h=x\otimes h+\mathcal{M}_{\phi_{\tau}}\quad\left(x\in\mathscr{E}, h\in\mathfrak{A}_{\tau}\right). 
\end{align}
\begin{proposition}
	Let $\mathscr{E}$ be a Hilbert $\mathfrak{A}$-module. 
	\begin{enumerate}
		\item
		The map $T:\mathscr{E}_{\tau}\rightarrow\mathcal{E}_{\tau}$ defined by 
		$x+\mathcal{N}^{\mathscr{E}}_{\tau}\mapsto x\otimes h_{\tau}+\mathcal{M}_{\phi_{\tau}}$
		is isometric. Thus, $\mathscr{E}_{\tau}$ is a closed subspace of the Hilbert space $\mathcal{E}_{\tau}$.
		\item In the case when $\tau\in\mathcal{P}\left(\mathfrak{A}\right)$, it holds that $\mathcal{E}_{\tau}\cong\mathscr{E}_{\tau}$.
	\end{enumerate}
\end{proposition}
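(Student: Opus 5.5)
For (1), I will compute inner products directly from \eqref{msm1}. For $x,y\in\mathscr{E}$,
$$\left\langle x\otimes h_\tau+\mathcal{M}_{\phi_\tau},\,y\otimes h_\tau+\mathcal{M}_{\phi_\tau}\right\rangle=\left\langle h_\tau,\phi_\tau(\langle x,y\rangle)h_\tau\right\rangle=\tau(\langle x,y\rangle),$$
by \eqref{sajjad1}. Hence $x\mapsto x\otimes h_\tau+\mathcal{M}_{\phi_\tau}$ kills exactly $\mathcal{N}^{\mathscr{E}}_\tau$. It therefore induces a well-defined, linear, isometric map on $\mathscr{E}/\mathcal{N}^{\mathscr{E}}_\tau$. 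This map extends by continuity to an isometry of the completion $\mathscr{E}_\tau$ into the Hilbert space $\mathcal{E}_\tau$. Its range is complete, hence closed, which identifies $\mathscr{E}_\tau$ with a closed subspace.

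For (2), it suffices to show that the range of $T$ is dense in $\mathcal{E}_\tau$ when $\tau$ is pure. Elementary tensors $x\otimes h$ with $h\in\mathfrak{A}_\tau$ span a dense subspace of $\mathcal{E}_\tau$. By Kadison's result, $\mathfrak{A}_\tau=\mathfrak{A}/\mathcal{N}^{\mathfrak{A}}_\tau$, so every $h$ has the form $a+\mathcal{N}^{\mathfrak{A}}_\tau=\phi_\tau(a)h_\tau$ for some $a\in\mathfrak{A}$. Then, in $\mathcal{E}_\tau$,
$$x\otimes \phi_\tau(a)h_\tau+\mathcal{M}_{\phi_\tau}=xa\otimes h_\tau+\mathcal{M}_{\phi_\tau}=T(xa+\mathcal{N}^{\mathscr{E}}_\tau),$$
because $xa\otimes h-x\otimes\phi_\tau(a)h\in\mathcal{M}_{\phi_\tau}$. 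So every elementary tensor lies in the range of $T$. The range is a linear subspace, so it is dense. By (1) it is also closed, so it equals $\mathcal{E}_\tau$. Thus $T$ is a unitary from $\mathscr{E}_\tau$ onto $\mathcal{E}_\tau$.

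The only subtle point is the density/cyclicity step. Even without Kadison's theorem, the density of $\{\phi_\tau(a)h_\tau\}$ in $\mathfrak{A}_\tau$ suffices (indeed it holds for any state), together with continuity of $h\mapsto x\otimes h$. So purity is used only through Kadison's completeness, or can be bypassed. I will also make sure that $\mathcal{E}_\tau$ is the completion of the algebraic tensor quotient, so that density of elementary tensors is legitimate.
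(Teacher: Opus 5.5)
Your proof is correct and follows the paper's argument: the inner-product computation via \eqref{sajjad1} for (1), and for (2) Kadison's identity $\mathfrak{A}_\tau=\mathfrak{A}/\mathcal{N}^{\mathfrak{A}}_\tau$ combined with $x\otimes\phi_\tau(a)h_\tau+\mathcal{M}_{\phi_\tau}=xa\otimes h_\tau+\mathcal{M}_{\phi_\tau}$. You also make explicit the dense-range-plus-closed-range step that the paper skips when it asserts surjectivity.

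Your side remark is valid as well. Since $\|x\otimes h+\mathcal{M}_{\phi_\tau}\|\leq\|x\|\,\|h\|$ and $\phi_\tau(\mathfrak{A})h_\tau$ is dense in $\mathfrak{A}_\tau$ for every state, the same argument gives $\mathcal{E}_\tau\cong\mathscr{E}_\tau$ for all $\tau\in\mathcal{S}(\mathfrak{A})$. Purity only buys the sharper algebraic fact that $T$ maps $\mathscr{E}/\mathcal{N}^{\mathscr{E}}_\tau$ onto $(\mathscr{E}\otimes_{alg}\mathfrak{A}_\tau)/\mathcal{M}_{\phi_\tau}$.
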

\begin{proof}
	(1)	We observe that 
	\begin{align*}
		\left\|x+\mathcal{N}^{\mathscr{E}}_{\tau}\right\|^2&=\left\langle	x+\mathcal{N}^{\mathscr{E}}_{\tau},	x+\mathcal{N}^{\mathscr{E}}_{\tau}\right\rangle=\tau(	\left\langle	x,	x\right\rangle)\\&=	\left\langle h_{\tau},\phi_{\tau}(\left\langle	x,	x\right\rangle)h_{\tau}\right\rangle\tag{by \eqref{sajjad1}}\\&=	\left\langle	 x\otimes h_{\tau}+\mathcal{M}_{\phi_{\tau}},	 x\otimes h_{\tau}+\mathcal{M}_{\phi_{\tau}}\right\rangle=\left\| x\otimes h_{\tau}+\mathcal{M}_{\phi_{\tau}}\right\|^2.
	\end{align*}
	(2) Let $\sum_{i=1}^{n}x_i\otimes h_i+\mathcal{M}_{\phi_{\tau}}\in\mathcal{E}_{\tau}$. Since $\tau$ is pure, we can write $h_i=a_i+\mathcal{N}^{\mathfrak{A}}_{\tau}$ for some $a_i\in\mathfrak{A}$ \cite[Corollary 1]{Kadison}. Then
	\begin{align*}
		\sum_{i=1}^{n}x_i\otimes h_i+\mathcal{M}_{\phi_{\tau}}&=	\sum_{i=1}^{n}x_i\otimes (a_i+\mathcal{N}^{\mathfrak{A}}_{\tau})+\mathcal{M}_{\phi_{\tau}}\\&=\sum_{i=1}^{n}x_i\otimes (\phi_{\tau}(a_i)h_{\tau})+\mathcal{M}_{\phi_{\tau}}\tag{since $\phi_{\tau}(a)h_{\tau}=a+{\mathcal{N}^{\mathfrak{A}}_{\tau}}$}\\&=\sum_{i=1}^{n}x_ia_i\otimes h_{\tau}+\mathcal{M}_{\phi_{\tau}}\tag{by the definition of $\mathcal{M}_{\phi_{\tau}}$}\\&=T\left(\sum_{i=1}^{n}x_ia_i+\mathcal{N}^{\mathscr{E}}_{\tau}\right).
	\end{align*}
	Therefore, the map $T$ is an isometric isomorphism. Hence $\mathcal{E}_{\tau}\cong\mathscr{E}_{\tau}$.
\end{proof}
Suppose that $\Phi:\mathscr{E}\rightarrow\mathbb{B}\left(\mathscr{H},\mathscr{K}\right) $ is a representation of $\mathscr{E}$. The commutant of $\Phi\left(\mathscr{E}\right) $ is defined as follows:
\begin{align*}
	\Phi\left(\mathscr{E}\right)^{\prime}=&\{ T_1\oplus T_2\in\mathbb{B}\left(\mathscr{H}\oplus\mathscr{K}\right): T_1\in\mathbb{B}\left(\mathscr{H}\right), T_2\in\mathbb{B}\left(\mathscr{K}\right),\\&T_2\Phi(x)=\Phi(x)T_1, T_1\Phi(x)^*=\Phi(x)^*T_2, x\in\mathscr{E}\}.
\end{align*}
\begin{remark}
Let $\tau\in\mathcal{P}\left(\mathfrak{A}\right)$.	It is easy to verify that 
$$\Phi_{\tau}(x)\left(a+{\mathcal{N}^{\mathfrak{A}}_{\tau}}\right)=xa+{\mathcal{N}^{\mathscr{E}}_{\tau}}$$
 and 
$$ \Phi_{\tau}(x)^*\left(y+{\mathcal{N}^{\mathscr{E}}_{\tau}}\right)=\left\langle x,y\right\rangle+{\mathcal{N}^{\mathfrak{A}}_{\tau}}.$$
It follows from \cite[Theorem 5.1.6]{mur} that $\phi_{\tau}$ is irreducible. Since $\Phi_{\tau}(\mathscr{E})\left({\mathfrak{A}}_{\tau}\right)$ is dense in ${\mathscr{E}}_{\tau}$. We infer from \cite[Proposition 3.6]{representation} that $\Phi_{\tau}$ is irreducible. Moreover, \cite[Proposition 4.5]{representation} ensures $\Phi_{\tau}(\mathscr{E})^{\prime}=\mathbb{C}\left(1_{\mathbb{B}\left({\mathfrak{A}}_{\tau} \right) }\oplus1_{\mathbb{B}\left({\mathscr{E}}_{\tau} \right) }\right)$.
\end{remark}
Let $\phi:\mathfrak{A}\rightarrow\mathfrak{B}$ be a $*$-homomorphism. By virtue of \cite[Theorem 3.1.6]{mur}, $\phi(\mathfrak{A})$ forms a $C^*$-subalgebra of $\mathfrak{B}$. The following proposition extends this result to the context of Hilbert $C^*$-modules.
\begin{proposition}\label{ideal}
 Let $\mathscr{E}$ be a Hilbert $\mathfrak{A}$-module and $\mathscr{F}$ be a Hilbert $\mathfrak{B}$-module. Suppose that $\phi:\mathfrak{A}\rightarrow\mathfrak{B}$ is a $*$-homomorphism. Let $\Phi:\mathscr{E}\rightarrow\mathscr{F}$ be a $\phi$-morphism. Then $\Phi\left(\mathscr{E}\right)$ is a Hilbert $\phi(\mathfrak{A})$-module As a consequence, it is closed in $\mathscr{F}$.
\end{proposition}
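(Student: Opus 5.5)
We need to show that $\Phi(\mathscr{E})$ is a right $\phi(\mathfrak{A})$-module whose inner product, the restriction of $\langle\cdot,\cdot\rangle_{\mathscr{F}}$, takes values in $\phi(\mathfrak{A})$ and whose norm is complete. Closedness in $\mathscr{F}$ then follows, because the norm on $\Phi(\mathscr{E})$ is the one inherited from $\mathscr{F}$.

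\emph{Step 1 (algebraic structure).} We first check that $\Phi$ is linear and $\phi$-equivariant, i.e. $\Phi(xa)=\Phi(x)\phi(a)$ and $\Phi(\alpha x+y)=\alpha\Phi(x)+\Phi(y)$. The standard trick is to expand the inner product of the difference with itself:
\[
\left\langle \Phi(xa)-\Phi(x)\phi(a),\Phi(xa)-\Phi(x)\phi(a)\right\rangle
=\phi(a^*\langle x,x\rangle a)-\phi(a^*\langle x,x\rangle a)-\phi(a^*\langle x,x\rangle a)+\phi(a^*\langle x,x\rangle a)=0 .
\]
Here we use only $\langle\Phi(x),\Phi(y)\rangle=\phi(\langle x,y\rangle)$ and the fact that $\phi$ is a $*$-homomorphism. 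The same expansion works for additivity and scalar homogeneity. It follows that $\Phi(\mathscr{E})$ is a linear subspace of $\mathscr{F}$, closed under right multiplication by $\phi(\mathfrak{A})$. Moreover $\langle\Phi(x),\Phi(y)\rangle=\phi(\langle x,y\rangle)\in\phi(\mathfrak{A})$, and $\phi(\mathfrak{A})$ is a $C^*$-subalgebra of $\mathfrak{B}$ by \cite[Theorem 3.1.6]{mur}. So $\Phi(\mathscr{E})$ is a pre-Hilbert $\phi(\mathfrak{A})$-module. Its norm is $\|\Phi(x)\|=\|\phi(\langle x,x\rangle)\|^{1/2}$, which agrees with the norm of $\mathscr{F}$ because the $C^*$-norm on $\phi(\mathfrak{A})$ is the restriction of the norm of $\mathfrak{B}$.

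\emph{Step 2 (completeness; the main obstacle).} The map $\Phi$ need not be injective, so we pass to a quotient. Let $J=\ker\phi$, a closed ideal of $\mathfrak{A}$, and let $\mathscr{E}J$ be the closed submodule $\{x\in\mathscr{E}:\langle x,x\rangle\in J\}$; the standard identity is $\overline{\mathscr{E}J}=\{x:\langle x,x\rangle\in J\}$. The quotient $\mathscr{E}/\mathscr{E}J$ is a Hilbert $\mathfrak{A}/J$-module with norm $\|x+\mathscr{E}J\|=\|\langle x,x\rangle+J\|^{1/2}$, and it is complete, being a quotient Banach space. The isomorphism $\dot\phi:\mathfrak{A}/J\to\phi(\mathfrak{A})$ is isometric. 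Hence
\[
\|\Phi(x)\|^2=\|\phi(\langle x,x\rangle)\|=\|\langle x,x\rangle+J\|=\|x+\mathscr{E}J\|^2 ,
\]
so $\Phi$ induces an isometric linear bijection from $\mathscr{E}/\mathscr{E}J$ onto $\Phi(\mathscr{E})$. Completeness of $\Phi(\mathscr{E})$ follows.

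The delicate point is to verify that the quotient norm on $\mathscr{E}/\mathscr{E}J$ equals $\|\langle x,x\rangle+J\|^{1/2}$, since that equality is what makes the induced map isometric. I would try to avoid it altogether by a direct argument. Take a sequence $(x_n)$ with $(\Phi(x_n))$ Cauchy. Choose a subsequence with $\|\Phi(x_{n+1})-\Phi(x_n)\|<2^{-n}$. Then lift each difference: we want $z_n$ with $\Phi(z_n)=\Phi(x_{n+1}-x_n)$ and $\|z_n\|\le 2^{-n}+\varepsilon_n$. To get such a lift, write $z=x_{n+1}-x_n$ and, using an approximate unit $(e_\lambda)$ of $J$, replace $z$ by $z-ze_\lambda$. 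We have $\Phi(ze_\lambda)=\Phi(z)\phi(e_\lambda)=0$, and
\[
\|z-ze_\lambda\|^2=\|(1-e_\lambda)\langle z,z\rangle(1-e_\lambda)\|\to\|\langle z,z\rangle+J\|
\]
by the standard quotient-norm formula for $C^*$-algebras. Then $\sum z_n$ converges in $\mathscr{E}$, and $\Phi$ of the limit gives the limit of $(\Phi(x_n))$, using the continuity $\|\Phi(y)\|\le\|y\|$.
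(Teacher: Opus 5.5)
Your proof is correct, but the completeness step follows a different route from the paper's. The paper first identifies $\ker\Phi=\mathscr{E}\ker\phi$, using the factorization $x=w|x|^{1/2}$ from Lance's Lemma 4.4. It then imports an external result (Theorem 1.6 of the cited reference) saying that $\mathscr{E}/\mathscr{E}J$ is a Hilbert $\mathfrak{A}/J$-module, i.e.\ complete in the norm $\|\langle x,x\rangle+J\|^{1/2}$. The map $x+\ker\Phi\mapsto\Phi(x)$ is then an isometry from a complete space.

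You instead prove the needed norm estimate by hand. With $(e_\lambda)$ an approximate unit of $J=\ker\phi$, the element $z-ze_\lambda$ has the same image as $z$ under $\Phi$. Moreover $\|z-ze_\lambda\|\to\|\langle z,z\rangle+J\|^{1/2}=\|\Phi(z)\|$, via $\|(1-e_\lambda)a(1-e_\lambda)\|=\|a^{1/2}(1-e_\lambda)\|^2$ and the quotient-norm formula applied to $a^{1/2}$. The telescoping series then gives completeness.

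In effect you show $\inf_{y\in\ker\Phi}\|x-y\|=\|\Phi(x)\|$, which is exactly the ``delicate point'' you set out to avoid. You could therefore conclude immediately, since $\ker\Phi$ is closed and $\mathscr{E}/\ker\Phi$ is a Banach space.

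Your route is more self-contained: it needs neither the kernel identification nor the external theorem, only the approximate-unit description of quotient norms. It also makes explicit that $\Phi$ is linear and $\phi$-equivariant. The paper's definition of a $\phi$-morphism does not assume this, yet its ``the other side is obvious'' step tacitly uses it. The paper's route, in turn, records the structural fact $\ker\Phi=\mathscr{E}\ker\phi$, giving $\Phi(\mathscr{E})\cong\mathscr{E}/\mathscr{E}\ker\phi$ as Hilbert $\mathfrak{A}/\ker\phi$-modules.
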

\begin{proof}
	It follows from $\left\langle\Phi(x),\Phi(y)\right\rangle=\phi\left(\left\langle x,y\right\rangle\right)$ that $\Phi\left(\mathscr{E}\right) $ is a pre-Hilbert $\phi(\mathfrak{A})$-module. We shall show that it is closed in $\mathscr{F}$. First, we show that $\ker\Phi=\mathscr{E}\ker\phi$. Let $\Phi(x)=0$. It follows from \cite[Lemma 4.4]{lance} that there exists $w\in\mathscr{E}$ such that $x=w\left|x\right|^{1/2}$. Since
	\begin{equation*}
		\phi(\left|x\right|^{1/2})=\phi\left(\left\langle x,x\right\rangle\right)^{1/4}=\left\langle\Phi(x),\Phi(x)\right\rangle^{1/4}=0,
	\end{equation*} 
we observe that $\ker\Phi\subseteq\mathscr{E}\ker\phi$. The other side is obvious. Now, it is deduced from \cite[Theorem 1.6]{ideal} that $\mathscr{E}/\ker\Phi$ equipped with the natural right $\mathfrak{A}/\ker\phi$-action and the inner product given by $\left\langle x+\ker\Phi,y+\ker\Phi\right\rangle=\phi\left(\left\langle x,y\right\rangle\right)$ is a Hilbert $\mathfrak{A}/\ker\phi$-module. Thus, the map 	$\mathscr{E}/\ker\Phi\rightarrow\mathscr{F}$ defined by
$x+\ker\Phi\mapsto\Phi(x)$ is isometric. It implies that $\Phi\left(\mathscr{E}\right)$ is closed in $\mathscr{F}$.
\end{proof}
\begin{corollary}\label{closed}
 Let $\tau\in\mathcal{S}\left(\mathfrak{A}\right)$. Then $\Phi_{\tau}\left(\mathscr{E}\right)$ is a Hilbert $\phi_{\tau}\left(\mathfrak{A}\right)$-module. Thus, $\Phi_{\tau}\left(\mathscr{E}\right)$ is closed in $\mathbb{B}\left(\mathfrak{A}_{\tau},\mathcal{E}_{\tau}\right)$. 
\end{corollary}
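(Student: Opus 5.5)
The plan is to apply Proposition \ref{ideal} directly, with $\mathfrak{B}:=\mathbb{B}\left(\mathfrak{A}_{\tau}\right)$, $\phi:=\phi_{\tau}$ and $\Phi:=\Phi_{\tau}$. For this, $\mathscr{F}:=\mathbb{B}\left(\mathfrak{A}_{\tau},\mathcal{E}_{\tau}\right)$ must be a Hilbert $\mathbb{B}\left(\mathfrak{A}_{\tau}\right)$-module, and this is the example recalled in the introduction: $\mathbb{B}(\mathscr{H},\mathscr{K})$ is a Hilbert $\mathbb{B}(\mathscr{H})$-module under $\langle T,S\rangle=T^*S$. Here $\mathscr{H}=\mathfrak{A}_{\tau}$ and $\mathscr{K}=\mathcal{E}_{\tau}$. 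The representation $\phi_{\tau}$ is a $*$-homomorphism $\mathfrak{A}\to\mathbb{B}(\mathfrak{A}_{\tau})$, so only one thing remains: showing that $\Phi_{\tau}$ is a $\phi_{\tau}$-morphism.

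The key step is therefore the identity
$$\Phi_{\tau}(x)^*\Phi_{\tau}(y)=\phi_{\tau}\left(\langle x,y\rangle\right)\qquad(x,y\in\mathscr{E}).$$
I would check it weakly. For $h,k\in\mathfrak{A}_{\tau}$, the definition \eqref{msm1} of the inner product on the interior tensor product gives
$$\left\langle \Phi_{\tau}(x)h,\Phi_{\tau}(y)k\right\rangle_{\mathcal{E}_{\tau}}=\left\langle x\otimes h+\mathcal{M}_{\phi_{\tau}},\,y\otimes k+\mathcal{M}_{\phi_{\tau}}\right\rangle=\left\langle h,\phi_{\tau}(\langle x,y\rangle)k\right\rangle_{\mathfrak{A}_{\tau}}.$$
Since this holds for all $h,k$, the identity follows.

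Before this, I need to confirm that $\Phi_{\tau}(x)$ really is a bounded operator from $\mathfrak{A}_{\tau}$ to $\mathcal{E}_{\tau}$. Setting $x=y$ and $h=k$ in the computation above gives
$$\|\Phi_{\tau}(x)h\|^2=\langle h,\phi_{\tau}(\langle x,x\rangle)h\rangle\le\|x\|^2\|h\|^2.$$
Linearity in $h$ is clear from the tensor construction, so $\Phi_{\tau}(x)$ is bounded and the map $\Phi_{\tau}$ is well defined.

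With these in hand, Proposition \ref{ideal} shows that $\Phi_{\tau}(\mathscr{E})$ is a Hilbert $\phi_{\tau}(\mathfrak{A})$-module and hence closed in $\mathbb{B}(\mathfrak{A}_{\tau},\mathcal{E}_{\tau})$. I do not expect a real obstacle. The only point needing care is the bookkeeping that the inner product on $\mathcal{E}_{\tau}$, built from \eqref{msm1} with $\mathfrak{B}=\mathbb{C}$, has the adjoint placed on the correct side, so that one gets $\phi_{\tau}(\langle x,y\rangle)$ rather than its adjoint.
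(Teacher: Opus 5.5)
Your proposal is correct and follows the paper's route exactly. The paper gives no separate proof: the corollary is read off from Proposition \ref{ideal} with $\phi=\phi_{\tau}$, $\Phi=\Phi_{\tau}$ and $\mathscr{F}=\mathbb{B}(\mathfrak{A}_{\tau},\mathcal{E}_{\tau})$ viewed as a Hilbert $\mathbb{B}(\mathfrak{A}_{\tau})$-module. Your check that $\Phi_{\tau}(x)$ is bounded and that $\Phi_{\tau}(x)^*\Phi_{\tau}(y)=\phi_{\tau}(\langle x,y\rangle)$ supplies what the paper takes for granted when it calls $\Phi_{\tau}$ a representation.
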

\begin{example}
	Suppose that $\mathscr{E}$ is a Hilbert $C^*$-module over a commutative $C^*$-algebra $\mathfrak{A}$ and $\tau\in\mathcal{P}\left(\mathfrak{A}\right)$. Then $\mathscr{E}_{\tau}={\mathscr{E}}/{\mathcal{N}^{\mathscr{E}}_{\tau}}$. To see this, let $x\in\mathscr{E}_{\tau}$. It follows that there exists a sequence $\left(x_n+\mathcal{N}^{\mathscr{E}}_{\tau}: n\in\mathbb{N}\right)$ converging to $x$. We can pick $a\in\mathfrak{A}$ such that $\tau(a)=1$. We observe that
	\begin{align*}
	\Phi_{\tau}\left(x_na^*\right)\left(b+\mathcal{N}^{\mathfrak{A}}_{\tau}\right)&= x_na^*b+\mathcal{N}^{\mathscr{E}}_{\tau}\\&
=\tau(a^*b)x_n+\mathcal{N}^{\mathscr{E}}_{\tau}
\tag{$\tau\in\Omega\left(\mathfrak{A}\right)$ is a $*$-homomorphism}\\&	=\theta_{x_n+\mathcal{N}^{\mathscr{E}}_{\tau},a+\mathcal{N}^{\mathfrak{A}}_{\tau}}\left(b+\mathcal{N}^{\mathfrak{A}}_{\tau}\right).
	\end{align*} 
Thus, $\Phi_{\tau}\left(x_na^*\right)=\theta_{x_n+\mathcal{N}^{\mathscr{E}}_{\tau},a+\mathcal{N}^{\mathfrak{A}}_{\tau}}$.
 It follows immediately that
 \begin{equation*}
\Phi_{\tau}\left(x_na^*\right)=\theta_{x_n+\mathcal{N}^{\mathscr{E}}_{\tau},a+\mathcal{N}^{\mathfrak{A}}_{\tau}}\rightarrow\theta_{x,a+\mathcal{N}^{\mathfrak{A}}_{\tau}},
 \end{equation*}
 as $n\to \infty$ in the norm topology. It follows from Corollary \ref{closed} that $\theta_{x,a+\mathcal{N}^{\mathfrak{A}}_{\tau}}\in\Phi_{\tau}\left(\mathscr{E}\right)$. Thus, there exists $y\in\mathscr{E}$ such that
 \begin{equation*}
 x=\theta_{x,a+\mathcal{N}^{\mathfrak{A}}_{\tau}}\left(a+\mathcal{N}^{\mathfrak{A}}_{\tau}\right)=	\Phi_{\tau}\left(y\right)\left(a+\mathcal{N}^{\mathfrak{A}}_{\tau}\right)=ya+\mathcal{N}^{\mathscr{E}}_{\tau}.
 \end{equation*}
Thus, $\mathscr{E}_{\tau}={\mathscr{E}}/{\mathcal{N}^{\mathscr{E}}_{\tau}}$.
\end{example}
Suppose that $\mathfrak{A}$ is a $C^*$-subalgebra of $\mathbb{B}\left(\mathscr{H}\right)$. Let $(\cdot)^\prime$ denote the commutant operation with respect to $\mathbb{B}\left(\mathscr{H}\right)$. Consider the universal representation $\phi_{u}:\mathfrak{A}\rightarrow\mathbb{B}\left(\mathscr{H}_u\right)$ and let $\mathfrak{A}_u=\phi_u(\mathfrak{A})$. The von Neumann algebra $\mathfrak{A}^{\prime\prime}_u$ is said to be the \emph{universal enveloping von Neumann algebra} of $\mathfrak{A}$. According to \cite[Theorem 3.1.3]{Manu}, $\mathfrak{A}^{\prime\prime}_u\cong\mathfrak{A}^{**}$, where $\mathfrak{A}^{**}$ is the bidual of $\mathfrak{A}$ equipped with $\sigma\left(\mathfrak{A}^{**},\mathfrak{A}^{*}\right)$-topology. It is important to note that if $\mathfrak{A}\subseteq\mathbb{B}\left(\mathscr{H}\right)$ and $\phi_{u}:\mathfrak{A}\rightarrow\mathbb{B}\left(\mathscr{H}_u\right)$ is the universal representation, then $\mathscr{H}$ can be viewed as a subspace of $\mathscr{H}_u$, and $\mathfrak{A}^{\prime\prime}\subseteq\mathfrak{A}^{\prime\prime}_u$. 

Let $\mathscr{E}$ be a Hilbert $\mathfrak{A}$-module. Consider the Hilbert $\mathfrak{A}^{**}$-module $\mathscr{E}^{\sharp}:=\mathscr{E}\otimes_{\iota}\mathfrak{A}^{**}$ as an extension of $\mathscr{E}$ by the algebra $\mathfrak{A}^{**}$, where $\iota:\mathfrak{A}\hookrightarrow\mathfrak{A}^{**}$ is the natural embedding. For $x_i,y_j\in\mathscr{E}$ and $a_i,b_j\in\mathfrak{A}^{**}$, we have
\begin{equation}\label{sharp}
	\left\langle\sum_{i=1}^{n}x_i\otimes a_i+\mathcal{M}_{\iota},\sum_{j=1}^{m}y_j\otimes b_j+\mathcal{M}_{\iota}\right\rangle=\sum_{i,j}	 a_i^*\left\langle x_i,y_j\right\rangle b_j,
\end{equation}
where $\mathcal{M}_{\iota}$ is defined in the paragraph preceding \eqref{msm1}. Let $1_u$ stand for the unit element of $\mathfrak{A}^{\prime\prime}_u\cong\mathfrak{A}^{**}$. We derive that the $\mathfrak{A}$-module map $\mathscr{E}\rightarrow\mathscr{E}^{\sharp}$
defined by $x\mapsto x\otimes1_{u}+\mathcal{M}_{\iota}$
is isometric. 
 \begin{proposition}\label{isimetric}
 	The map $\mathcal{L}\left(\mathscr{E}\right)\rightarrow\mathcal{L}\left(\mathscr{E}^{\sharp}\right) $ defined by $T\mapsto T_{\mathscr{E}^{\sharp}}$ is an isometric $*$-homomorphism.
 \end{proposition}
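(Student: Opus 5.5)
The $*$-homomorphism property is already recorded in the discussion around \eqref{T}: $T\mapsto T_{\mathscr{E}\otimes_{\phi}\mathscr{F}}$ is a $*$-homomorphism for any interior tensor product. We apply it with $\phi=\iota$ and $\mathscr{F}=\mathfrak{A}^{**}$. Well-definedness and boundedness come from the standard estimate $\left\langle Tx,Tx\right\rangle\leq\|T\|^2\left\langle x,x\right\rangle$ in $\mathfrak{A}$. The inequality is preserved under $a\mapsto b^*\iota(a)b$, which gives $\|T_{\mathscr{E}^{\sharp}}\|\leq\|T\|$. So only isometry needs proof.

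\emph{Injectivity.} Since a $*$-homomorphism between $C^*$-algebras is isometric exactly when it is injective, it suffices to show that $T_{\mathscr{E}^{\sharp}}=0$ forces $T=0$. Recall from the paragraph before the statement that $x\mapsto x\otimes1_u+\mathcal{M}_{\iota}$ is isometric from $\mathscr{E}$ into $\mathscr{E}^{\sharp}$. By \eqref{sharp},
\begin{equation*}
\left\langle x\otimes1_u+\mathcal{M}_{\iota},\,y\otimes1_u+\mathcal{M}_{\iota}\right\rangle=\iota\left(\left\langle x,y\right\rangle\right).
\end{equation*}
Now fix $x\in\mathscr{E}$. Then
\begin{equation*}
\|Tx\|=\left\|Tx\otimes1_u+\mathcal{M}_{\iota}\right\|=\left\|T_{\mathscr{E}^{\sharp}}\left(x\otimes1_u+\mathcal{M}_{\iota}\right)\right\|\leq\|T_{\mathscr{E}^{\sharp}}\|\,\|x\|.
\end{equation*}
The middle equality is the defining formula \eqref{T}. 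Taking the supremum over $\|x\|\leq1$ gives $\|T\|\leq\|T_{\mathscr{E}^{\sharp}}\|$ directly. This bypasses even the injectivity argument and yields isometry together with the reverse bound above.

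\emph{Main obstacle.} The only delicate point is that $1_u\in\mathfrak{A}^{**}$ may not lie in $\iota(\mathfrak{A})$ when $\mathfrak{A}$ is non-unital. One must check that $x\otimes1_u$ is a legitimate element and that \eqref{sharp} applies. This holds because elementary tensors $x\otimes b$ with $b\in\mathfrak{A}^{**}$ are allowed, and $\left\langle x\otimes1_u,x\otimes1_u\right\rangle=1_u\iota(\langle x,x\rangle)1_u=\iota(\langle x,x\rangle)$. The isometry then follows because $\iota$ is an isometric embedding.
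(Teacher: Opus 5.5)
Your proof is correct and rests on the same observation as the paper's. The isometric embedding $x\mapsto x\otimes 1_u+\mathcal{M}_{\iota}$ of $\mathscr{E}$ into $\mathscr{E}^{\sharp}$ intertwines $T$ with $T_{\mathscr{E}^{\sharp}}$. The paper uses this only to get injectivity ($S_{\mathscr{E}^{\sharp}}=0\Rightarrow\langle Sx,Sx\rangle=0$), and then invokes the fact that injective $*$-homomorphisms between $C^*$-algebras are isometric. You instead read off $\|T\|\leq\|T_{\mathscr{E}^{\sharp}}\|$ directly and combine it with contractivity, which is marginally more self-contained.

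One small imprecision: pushing $\langle Tx,Tx\rangle\leq\|T\|^2\langle x,x\rangle$ through $a\mapsto b^*\iota(a)b$ only controls elementary tensors. For finite sums $\sum_j x_j\otimes b_j$ you would need the matrix inequality $[\langle Tx_r,Tx_s\rangle]_{r,s}\leq\|T\|^2[\langle x_r,x_s\rangle]_{r,s}$ in $\mathbb{M}_m(\mathfrak{A})$. This does not matter here, because the upper bound $\|T_{\mathscr{E}^{\sharp}}\|\leq\|T\|$ already follows from the contractivity of any $*$-homomorphism.
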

\begin{proof}
	 Following \eqref{T}, this map is a $*$-homomorphism. Let $S_{\mathscr{E}^{\sharp}}=0$. For all $x\in\mathscr{E}$, we have $Sx\otimes1_u+\mathcal{M}_{\o\iota}=S_{\mathscr{E}^{\sharp}}(x\otimes1_u+\mathcal{M}_{\o\iota})=0$. So $\left\langle Sx,Sx\right\rangle=\o\iota\left(\left\langle Sx,Sx\right\rangle \right)=0$. Thus, $S=0$. Then, \cite[Theorem 3.1.5]{mur} entails that the map is isometric.
\end{proof}
Next, let $\tau\in\mathcal{P}\left(\mathfrak{A}\right)$. Then, there exists a minimal projection $p_{\tau}\in\mathfrak{A}^{**}$, in the sense that $p_{\tau}\mathfrak{A}^{**}p_{\tau}=\mathbb{C}p_{\tau}$, such that
\begin{equation}\label{pedersen}
	\mathcal{N}^{\mathfrak{A}}_{\tau}=\left\lbrace x\in\mathfrak{A}: ap_{\tau}=0 \right\rbrace.
\end{equation}
In addition, the pure state $\tau$ has a unique extension to a normal state $\tau^{\prime}$ on $\mathfrak{A}^{**}$ satisfying $\tau^{\prime}\left(p_{\tau}\right)=1$; see \cite[Proposition 3.13.6]{Pedersen} and \cite[Lemma A]{Nourouzi}. 
\begin{lemma}\label{lem1}
	Let $\tau\in\mathcal{P}\left(\mathfrak{A}\right)$ and $\rho\in\mathcal{S}\left(\mathfrak{A}\right)$. If $\mathcal{N}^{\mathfrak{A}}_{\tau}=	\mathcal{N}^{\mathfrak{A}}_{\rho}$, then $\rho=\tau$.
\end{lemma}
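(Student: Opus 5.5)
Since $\mathcal{N}^{\mathfrak{A}}_{\tau}=\mathcal{N}^{\mathfrak{A}}_{\rho}$ is a closed left ideal (call it $L$), we have $\ker\tau\supseteq L+L^*$ and likewise for $\rho$. The plan is to use the standard fact for pure states that
\begin{equation*}
\ker\tau=L+L^*,
\end{equation*}
(\cite[Theorem 5.3.4]{mur} or \cite[Proposition 3.13.6]{Pedersen}), and then to show $\ker\tau\subseteq\ker\rho$. Once this holds, $\rho=c\tau$ for some scalar $c$, because both are linear functionals with nested kernels of codimension one. Positivity and $\|\rho\|=\|\tau\|=1$ then force $c=1$.

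\emph{Step 1.} For $a\in L$ the Cauchy--Schwarz inequality gives $|\rho(b^*a)|^2\le\rho(b^*b)\rho(a^*a)=0$. Hence $\rho$ vanishes on $\mathfrak{A}L$, and also on $L$ itself, by approximating with an approximate unit $e_\lambda$: $\rho(a)=\lim\rho(e_\lambda a)$ and $|\rho(e_\lambda a)|^2\le\rho(e_\lambda^2)\rho(a^*a)=0$. Since $\rho$ is hermitian, it vanishes on $L^*$ as well. Therefore $\rho$ vanishes on $L+L^*$. The same argument applies to $\tau$.

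\emph{Step 2.} Invoke $\ker\tau=L+L^*$ for the pure state $\tau$. This is exactly where purity enters; it is the main obstacle if one does not simply cite it. One could alternatively argue with \eqref{pedersen}: $L=\mathfrak{A}\cap\mathfrak{A}^{**}(1-p_\tau)$. Given $a\in\ker\tau$, we have $p_\tau a p_\tau=\tau'(a)p_\tau=0$, and then $a=a(1-p_\tau)+(1-p_\tau)ap_\tau$ in $\mathfrak{A}^{**}$. The difficulty is pulling this decomposition back into $\mathfrak{A}$, which needs Kadison transitivity. For that reason I would cite the known theorem rather than redo it.

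\emph{Step 3.} By Step 1, $\ker\tau=L+L^*\subseteq\ker\rho$. Pick $u$ with $\tau(u)=1$; then any $a$ can be written $a=(a-\tau(a)u)+\tau(a)u$, which gives $\rho=\rho(u)\tau$. Since $\rho$ and $\tau$ are both states, evaluating norms gives $|\rho(u)|=1$, and positivity gives $\rho(u)\ge0$. Hence $\rho=\tau$.
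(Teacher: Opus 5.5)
Your proof is correct, but it takes a slightly different and more economical route than the paper.

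The paper argues in two cited steps. First it invokes \cite[Proposition 3.13.6]{Pedersen} to conclude that $\rho$ is itself a pure state. Then it applies the uniqueness theorem \cite[Theorem 5.3.5]{mur}: two pure states with the same left kernel coincide.

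You instead unpack the mechanism behind that uniqueness theorem, and you observe that purity is needed on only one side. The inclusion $L+L^*\subseteq\ker\rho$, with $L=\mathcal{N}^{\mathfrak{A}}_{\tau}=\mathcal{N}^{\mathfrak{A}}_{\rho}$, holds for every state by Cauchy--Schwarz. The reverse inclusion $\ker\tau\subseteq L+L^*$ is the only place where purity of $\tau$ enters. So you never need to know in advance that $\rho$ is pure; that comes out as a by-product of $\rho=\tau$.

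Your route relies on a single cited fact, $\ker\tau=\mathcal{N}^{\mathfrak{A}}_{\tau}+(\mathcal{N}^{\mathfrak{A}}_{\tau})^*$ for pure $\tau$. It avoids the correspondence between maximal left ideals and pure states that the paper needs in order to upgrade $\rho$. The paper's version is shorter on the page, but it hides the real content in two citations, and the purity-of-$\rho$ step is the less transparent one.

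One small repair in Step 3. For an arbitrary $u$ with $\tau(u)=1$, positivity of $\rho$ does not directly give $\rho(u)\ge 0$. Either choose $u\ge 0$ from the start, which is possible since $\tau$ is a nonzero positive functional. Or evaluate $\rho=c\tau$ at a positive element on which $\tau$ is nonzero to get $c\ge 0$. Then $|c|=1$ forces $c=1$.
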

\begin{proof}
	Utilizing \cite[Proposition 3.13.6]{Pedersen}, we get $\rho\in\mathcal{P}\left(\mathfrak{A}\right)$. It follows from \cite[Theorem 5.3.5]{mur} that $\rho=\tau$.
\end{proof}
Next, we extend \eqref{pedersen} to the framework of Hilbert $C^*$-modules.
\begin{proposition}\label{ped}
	Let $\mathscr{E}$ be a Hilbert $\mathfrak{A}$-module. If $\tau$ is a pure state on $\mathfrak{A}$, then there exists a minimal projection $p_{\tau}\in\mathfrak{A}^{**}$ satisfying \eqref{pedersen} such that
	\begin{enumerate}
		\item\label{ped1}
		$\mathcal{N}^{\mathscr{E}}_{\tau}=\left\lbrace x\in\mathscr{E}: xp_{\tau}=0 \right\rbrace$;
		\item
		the map $\mathscr{E}/\mathcal{N}^{\mathscr{E}}_{\tau}\rightarrow\mathscr{E}p_{\tau}$ defined by $x+\mathcal{N}^{\mathscr{E}}_{\tau}\mapsto xp_{\tau}$ is an isometric isomorphism.
	\end{enumerate}
\end{proposition}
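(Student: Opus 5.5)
We first fix how $xp_{\tau}$ is to be read. Since $\mathscr{E}$ embeds isometrically in $\mathscr{E}^{\sharp}$ via $x\mapsto x\otimes 1_u+\mathcal{M}_{\iota}$, we set $xp_{\tau}:=x\otimes p_{\tau}+\mathcal{M}_{\iota}\in\mathscr{E}^{\sharp}$. Here $p_{\tau}$ is the minimal projection given by \eqref{pedersen}, together with the normal extension $\tau'$ satisfying $\tau'(p_\tau)=1$. By \eqref{sharp},
\begin{equation*}
\left\langle xp_{\tau},yp_{\tau}\right\rangle=p_{\tau}\left\langle x,y\right\rangle p_{\tau}\quad(x,y\in\mathscr{E}).
\end{equation*}
By minimality, $p_{\tau}\mathfrak{A}^{**}p_{\tau}=\mathbb{C}p_{\tau}$, so $p_{\tau}ap_{\tau}=\lambda(a)p_{\tau}$ for a scalar $\lambda(a)$. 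The central step is the identity
\begin{equation*}
p_{\tau}ap_{\tau}=\tau(a)p_{\tau}\qquad(a\in\mathfrak{A}).
\end{equation*}
To prove it, apply $\tau'$ and get $\tau'(p_\tau ap_\tau)=\lambda(a)$. Since $\tau'(p_\tau)=1$, we have $\tau'(1_u-p_\tau)=0$. Cauchy--Schwarz then gives $\tau'(p_\tau b)=\tau'(b)=\tau'(bp_\tau)$ for all $b$, hence $\tau'(p_\tau ap_\tau)=\tau'(a)=\tau(a)$.

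With this identity the rest is computation. We get
\begin{equation*}
\left\langle xp_\tau,yp_\tau\right\rangle=\tau(\langle x,y\rangle)p_\tau,
\end{equation*}
so $\|xp_\tau\|^2=\tau(\langle x,x\rangle)\|p_\tau\|=\tau(\langle x,x\rangle)$, since $p_\tau\neq0$.
\begin{itemize}
\item[(1)] $xp_\tau=0$ if and only if $\tau(\langle x,x\rangle)=0$, which is $x\in\mathcal{N}^{\mathscr{E}}_\tau$. This is consistent with \eqref{pedersen} in the case $\mathscr{E}=\mathfrak{A}$, where $|ap_\tau|^2=p_\tau a^*ap_\tau=\tau(a^*a)p_\tau$.
\item[(2)] The map $x+\mathcal{N}^{\mathscr{E}}_\tau\mapsto xp_\tau$ is well defined and injective by (1). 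It is linear, surjective onto $\mathscr{E}p_\tau$ by definition, and isometric because $\|x+\mathcal{N}^{\mathscr{E}}_\tau\|^2=\tau(\langle x,x\rangle)=\|xp_\tau\|^2$. It even carries the scalar inner product $\tau(\langle x,y\rangle)$ to the coefficient of $p_\tau$ in $\langle xp_\tau,yp_\tau\rangle$, so $\mathscr{E}p_\tau$ is a Hilbert space with respect to that coefficient.
\end{itemize}

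The main obstacle is the central identity, specifically choosing $p_\tau$ so that $\tau'(p_\tau)=1$ and minimality hold at once. This is the support projection of $\tau'$ from \cite[Proposition 3.13.6]{Pedersen}, which we take as given. If the minimality convention needs care, we argue as follows: $p_\tau ap_\tau$ is a scalar multiple of $p_\tau$, and the scalar is identified by evaluating with $\tau'$. Finally, well-definedness of $x\otimes p_\tau$ modulo $\mathcal{M}_\iota$ is automatic from the interior tensor product construction.
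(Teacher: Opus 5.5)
Your proof is correct, but it takes a different route from the paper's. The paper proves (1) and (2) separately.
\begin{enumerate}
\item For (1) it uses only the left-kernel description \eqref{pedersen}, applied to $|x|$ and to $|x|^{1/2}$ (via $\tau(|x|)\le\tau(\langle x,x\rangle)^{1/2}$). It combines this with Lance's factorization $x=w|x|^{1/2}$.
\item For (2) it passes to the unitization and defines $\rho$ by $p_\tau a p_\tau=\rho(a)p_\tau$. It then notes from \eqref{pedersen} that $\rho$ and $\tilde\tau$ have the same left kernel, and concludes $\rho=\tilde\tau$ from Lemma~\ref{lem1} (a state with the left kernel of a pure state equals it).
\end{enumerate}
So the paper reaches $p_\tau a p_\tau=\tau(a)p_\tau$ only at the end, through a uniqueness argument. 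It uses minimality and \eqref{pedersen}, but never the normal extension $\tau'$.

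You instead prove that identity first and directly, from minimality, $\tau'(p_\tau)=1$ and Cauchy--Schwarz. Both (1) and (2) then follow from $\langle xp_\tau,yp_\tau\rangle=\tau(\langle x,y\rangle)p_\tau$. Your route has several advantages:
\begin{itemize}
\item it is shorter;
\item it needs no unitization, since $\tau'$ already lives on the unital algebra $\mathfrak{A}^{**}$;
\item it does without the factorization lemma and Lemma~\ref{lem1};
\item it recovers \eqref{pedersen} as the special case $\mathscr{E}=\mathfrak{A}$ instead of assuming it;
\item it makes explicit that the map carries $\tau(\langle x,y\rangle)$ to the $p_\tau$-coefficient of the $\mathfrak{A}^{**}$-valued inner product, which is what underlies treating $\overline{\mathscr{E}p_\tau}$ as a Hilbert subspace of $\mathscr{E}^{\sharp}$ in Theorem~\ref{thm1}.
\end{itemize}
The paper's route trades the input $\tau'(p_\tau)=1$ for \eqref{pedersen} together with the fact that pure states are determined by their left kernels. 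The paper quotes both kinds of input from the literature, so neither argument is more self-contained in that respect.
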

\begin{proof}
(1)	Let $x\in\mathscr{E}$ such that $xp_{\tau}=0$. Therefore, $\left( \left|x\right|p_{\tau}\right)^*\left( \left|x\right|p_{\tau}\right)=\left\langle xp_{\tau},xp_{\tau}\right\rangle=0$, whence $\left|x\right|p=0$. It follows from \eqref{pedersen} that $\left|x\right|\in\mathcal{N}^{\mathfrak{A}}_{\tau}$. Hence, $\tau\left(\left\langle x,x\right\rangle\right)=0$. Thus, $\left\lbrace x\in\mathscr{E}: xp_{\tau}=0 \right\rbrace\subseteq\mathcal{N}^{\mathscr{E}}_{\tau}$.
		
		For the converse, let $x\in\mathscr{E}$ such that $\tau\left(\left\langle x,x\right\rangle\right)=0$. From \cite[Theorem 3.3.2]{mur} we conclude that $\tau\left(\left|x \right| \right)\leq\tau\left(\left\langle x,x\right\rangle\right)^{1/2}=0$. We deduce from \eqref{pedersen} that $\left|x\right|^{1/2}p_{\tau}=0$. It follows from \cite[Lemma 4.4]{lance} that there exists $w\in\mathscr{E}$ such that $x=w\left|x\right|^{1/2}$. Thus, $xp_{\tau}=w\left|x\right|^{1/2}p_{\tau}=0$. Hence, 	$\mathcal{N}^{\mathscr{E}}_{\tau}\subseteq\left\lbrace x\in\mathscr{E}: xp_{\tau}=0 \right\rbrace$.
	
	(2)	Let $\tau\in\mathcal{P}\left(\mathfrak{A}\right)$.  \cite[Theorem 5.1.13]{mur} implies that there exists a unique $\tilde{\tau}\in\mathcal{P}\left(\mathfrak{A}\oplus\mathbb{C}1_{u}\right)$, where $\mathfrak{A}\oplus\mathbb{C}1_{u}$ is the minimal unitization of $\mathfrak{A}$. Now, we can consider $\mathscr{E}$ as a Hilbert $\mathfrak{A}\oplus\mathbb{C}1_{u}$-module via the module action $x\left(a+\alpha\right)=xa+\alpha x$ for $x\in\mathscr{E}$, $a\in\mathfrak{A}$, and $\alpha\in\mathbb{C}$. From condition (\ref{ped1}), we derive that there exists a minimal projection $p_{\tau}\in\left( \mathfrak{A}\oplus\mathbb{C}1_u\right)^{**}=\left( \mathfrak{A}\oplus\mathbb{C}1_u\right)_{u}^{\prime\prime}=\mathfrak{A}_{u}^{\prime\prime}$ such that 	
		\begin{equation}\label{noruzi}
			\left\lbrace x\in\mathfrak{A}\oplus\mathbb{C}1_u: xp_{\tau}=0 \right\rbrace=\mathcal{N}^{\mathfrak{A}\oplus\mathbb{C}1_u}_{\tilde{\tau}}.
		\end{equation}
		Define $\rho:\mathfrak{A}\oplus\mathbb{C}1_u\rightarrow\mathbb{C}$ by $x\mapsto\lambda_x$, where 
		$p_{\tau}xp_{\tau}=\lambda_{x}p_{\tau}$. It is immediately deduced that $\rho\in\mathcal{S}\left(\mathfrak{A}\oplus\mathbb{C}1_u\right)$. It follows from \eqref{noruzi} that 	$\mathcal{N}^{\mathfrak{A}\oplus\mathbb{C}1_u}_{\tilde{\tau}}=	\mathcal{N}^{\mathfrak{A}\oplus\mathbb{C}1_u}_{\rho}$. We infer from Lemma \ref{lem1} that $\rho=\tilde{\tau}$. Thus, 
		\begin{align*}
			\left\|x+\mathcal{N}^{\mathscr{E}}_{\tau}\right\|&=\tau\left(\left\langle x,x \right\rangle\right)^{1/2}=\tilde{\tau}\left(\left\langle x,x \right\rangle\right)^{1/2}\\
			&=\rho\left(\left\langle x,x \right\rangle\right)^{1/2}=\left\| p_{\tau}\left\langle x,x \right\rangle p_{\tau}\right\| ^{1/2}=\left\|xp_{\tau}\right\|. 
		\end{align*}
			Clearly, The map $\mathscr{E}/\mathcal{N}^{\mathscr{E}}_{\tau}\rightarrow\mathscr{E}p_{\tau}$ is surjective. Hence, it is an isometric isomorphism.
\end{proof}
Inspired by the previous proposition, we can consider $\mathscr{E}_{\tau}=\mathscr{E}p_{\tau}$ as a Hilbert subspace of $\mathscr{E}^{\sharp}$. In what follows, we extensively investigate $\mathfrak{A}_{\tau}={\mathfrak{A}}/{\mathcal{N}^{\mathfrak{A}}_{\tau}}$ in the setting of Hilbert $C^*$-modules over a $C^*$-algebra of compact operators. First, we present the following lemma. 
\begin{lemma}\label{c_0}
	$\left( \Sigma_{\lambda\in\Lambda}\mathbb{K}\left(\mathscr{H}_{\lambda}\right)\right)^{**}\cong\prod_{\lambda\in\Lambda}\mathbb{B}\left(\mathscr{H}_{\lambda}\right)$.
	\end{lemma}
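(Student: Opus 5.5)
The plan is to compute the dual and then the bidual using the duality relations \eqref{duality} stated in the introduction, and to identify each factor with classical Schatten duality. We also have to check that the resulting Banach space isomorphism is a $*$-isomorphism for the Arens product, which is the $C^*$-structure carried by $\mathfrak{A}^{**}\cong\mathfrak{A}_u^{\prime\prime}$.

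\textbf{Banach-space identification.} By \eqref{duality},
$$\Big(\Sigma_{\lambda\in\Lambda}\mathbb{K}(\mathscr{H}_\lambda)\Big)^*\cong\oplus^{\ell^1}_{\lambda\in\Lambda}\mathbb{K}(\mathscr{H}_\lambda)^*.$$
The classical trace duality gives $\mathbb{K}(\mathscr{H}_\lambda)^*\cong\mathcal{T}(\mathscr{H}_\lambda)$, the trace class operators, via $S\mapsto \mathrm{Tr}(S\,\cdot)$. Applying the second relation in \eqref{duality} together with $\mathcal{T}(\mathscr{H}_\lambda)^*\cong\mathbb{B}(\mathscr{H}_\lambda)$ then yields
$$\Big(\Sigma_{\lambda}\mathbb{K}(\mathscr{H}_\lambda)\Big)^{**}\cong\prod_{\lambda}\mathcal{T}(\mathscr{H}_\lambda)^*\cong\prod_{\lambda}\mathbb{B}(\mathscr{H}_\lambda).$$
Concretely, this is an isometric linear bijection $\Psi$. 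For $F$ in the bidual, $\Psi(F)=(T_\lambda)$, where $T_\lambda$ is determined by $\mathrm{Tr}(T_\lambda S)=F(\iota_\lambda(S))$ for $S\in\mathcal{T}(\mathscr{H}_\lambda)$. Here $\iota_\lambda$ is the embedding of the $\lambda$-th summand into the $\ell^1$-sum. Under $\Psi$, the canonical embedding of $\Sigma_\lambda\mathbb{K}(\mathscr{H}_\lambda)$ is the inclusion into $\prod_\lambda\mathbb{B}(\mathscr{H}_\lambda)$.

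\textbf{Upgrading to a $*$-isomorphism.} This step is where care is needed. I would use weak$^*$ continuity together with density.
\begin{enumerate}
\item $\Psi$ is a homeomorphism from the $\sigma(\mathfrak{A}^{**},\mathfrak{A}^*)$-topology onto the weak$^*$ topology of $\prod_\lambda\mathbb{B}(\mathscr{H}_\lambda)$ as the dual of $\oplus^{\ell^1}_\lambda\mathcal{T}(\mathscr{H}_\lambda)$. On bounded sets the latter is the product of the $\sigma$-weak topologies.
\item By Goldstine's theorem, the ball of $\mathfrak{A}$ is weak$^*$ dense in the ball of $\mathfrak{A}^{**}$.
\item In $\mathfrak{A}^{**}\cong\mathfrak{A}_u^{\prime\prime}$, multiplication is separately weak$^*$ continuous and the adjoint is weak$^*$ continuous. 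The same holds in the von Neumann algebra $\prod_\lambda\mathbb{B}(\mathscr{H}_\lambda)$.
\item $\Psi$ is multiplicative and $*$-preserving on $\mathfrak{A}$. Extending one variable at a time by separate continuity, $\Psi$ is multiplicative and $*$-preserving on all of $\mathfrak{A}^{**}$.
\end{enumerate}
A bijective $*$-homomorphism between $C^*$-algebras is then a $*$-isomorphism.

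\textbf{Alternative route.} One could also argue via the universal representation. The universal enveloping von Neumann algebra is the $\sigma$-weak closure of $\mathfrak{A}$ in any representation that realizes all normal functionals. The representation $\oplus_\lambda\mathrm{id}_{\mathscr{H}_\lambda}$ does this, since every functional on $\mathfrak{A}$ is an $\ell^1$-sum of trace-class functionals. Its bicommutant is $\prod_\lambda\mathbb{B}(\mathscr{H}_\lambda)$, because $\mathbb{K}(\mathscr{H}_\lambda)^{\prime\prime}=\mathbb{B}(\mathscr{H}_\lambda)$ and the summands are mutually disjoint.

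The main obstacle is the compatibility of the products, namely the continuity step (3) above. The vector-space duality itself is routine from \eqref{duality}.
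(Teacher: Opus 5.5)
Your proposal is correct, and its core is exactly the paper's proof: apply both relations in \eqref{duality} together with $\mathbb{K}(\mathscr{H}_\lambda)^*\cong\mathcal{T}(\mathscr{H}_\lambda)$ and $\mathcal{T}(\mathscr{H}_\lambda)^*\cong\mathbb{B}(\mathscr{H}_\lambda)$; the paper simply cites $\mathbb{K}(\mathscr{H}_\lambda)^{**}\cong\mathbb{B}(\mathscr{H}_\lambda)$ directly. The paper stops at this Banach-space identification, so your Goldstine plus separate weak$^*$-continuity step (or the universal-representation alternative) is a real addition: it supplies the multiplicativity and $*$-preservation that the paper uses tacitly afterwards, for instance when it identifies a minimal projection $p_\tau\in\mathfrak{A}^{**}$ with some $\theta_{e_\lambda,e_\lambda}$ and when it uses that $\Sigma_{\lambda\in\Lambda}\mathbb{K}(\mathscr{H}_\lambda)$ is an ideal in $\prod_{\lambda\in\Lambda}\mathbb{B}(\mathscr{H}_\lambda)$.
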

\begin{proof}
	It follows from \cite[p. 125]{mur} that $\mathbb{K}\left(\mathscr{H}_{\lambda}\right)^{**}\cong\mathbb{B}\left(\mathscr{H}_{\lambda}\right)$. Thus,  \eqref{duality} ensures that
		$\left( \Sigma_{\lambda\in\Lambda}\mathbb{K}\left(\mathscr{H}_{\lambda}\right)\right)^{**}\cong\left( \oplus^{\ell^1}_{\lambda\in\Lambda}\mathbb{K}\left(\mathscr{H}_{\lambda}\right)^*\right)^{*}\cong\prod_{\lambda\in\Lambda}\mathbb{B}\left(\mathscr{H}_{\lambda}\right)$.
\end{proof}
\begin{example}
	Let $\mathscr{E}$ be a Hilbert $C^*$-module over the $C^*$-algebra $\Sigma_{\lambda\in\Lambda}\mathbb{K}\left(\mathscr{H}_{\lambda}\right)$. Let $\tau\in\mathcal{P}\left(\Sigma_{\lambda\in\Lambda}\mathbb{K}\left(\mathscr{H}_{\lambda}\right)\right)$. By Proposition \ref{ped}, there exists a minimal projection $p_{\tau}\in\left( \Sigma_{\lambda\in\Lambda}\mathbb{K}\left(\mathscr{H}_{\lambda}\right)\right)^{**}$. It follows from Lemma \ref{c_0} that $p_{\tau}\in\prod_{\lambda\in\Lambda}\mathbb{B}\left(\mathscr{H}_{\lambda}\right)$. It is immediately deduced from \cite[p. 55]{mur} that $p_{\tau}=\theta_{e_{\lambda},e_{\lambda}}$, for some $\lambda\in\Lambda$ and unit vector $e_{\lambda}\in\mathscr{H}_{\lambda}$. Thus, $p_{\tau}\in \Sigma_{\lambda\in\Lambda}\mathbb{K}\left(\mathscr{H}_{\lambda}\right)$. Proposition \ref{ped}(2) entails that 
	$\mathscr{E}/\mathcal{N}^{\mathscr{E}}_{\tau}\cong\mathscr{E}p_{\tau}\subseteq\mathscr{E}$. Since $\mathscr{E}p_{\tau}$ is a Hilbert subspace of $\mathscr{E}$, we observe that 	$\mathscr{E}_{\tau}={\mathscr{E}}/{\mathcal{N}^{\mathscr{E}}_{\tau}}$. In other word, ${\mathscr{E}}/{\mathcal{N}^{\mathscr{E}}_{\tau}}$ is complete.
	
	Moreover, we show that there exist $\eta\in\Lambda$ and a unit vector $e_{\eta}\in\mathscr{H}_{\eta}$ such that 
	\begin{equation}
		\tau\left(\left(u_{\lambda}\right)\right)= \left\langle u_{\eta}e_{\eta},e_{\eta}\right\rangle_{\mathscr{H}_{\eta}}\quad\left(\left( u_{\lambda}\right)\in\Sigma_{\lambda\in\Lambda}\mathbb{K}\left(\mathscr{H}_{\lambda}\right)\right).
	\end{equation}
	To prove this claim, let $\left( u^{\eta}_{\lambda}\right)\in\Sigma_{\lambda\in\Lambda}\mathbb{K}\left(\mathscr{H}_{\lambda}\right)$ be defined as follows: 
	\begin{align*}
		u^{\eta}_{\lambda}=\begin{cases}
			u_{\eta} &\mbox{if}\quad \lambda=\eta\\0 &\mbox{if}\quad\lambda\neq\eta.
		\end{cases}	
	\end{align*}
	We can select $\eta\in\Lambda$ such that $\tau\left(\left( u^{\eta}_{\lambda}\right)\right)\neq0$. Define $\rho:\Sigma_{\lambda\in\Lambda}\mathbb{K}\left(\mathscr{H}_{\lambda}\right)\rightarrow\mathbb{C}$ by $\rho\left(\left(u_{\lambda}\right) \right)=\tau\left(\left( u^{\eta}_{\lambda}\right)\right)$. Clearly, $\rho$ is a positive linear functional satisfying $\rho\leq\tau$. This ensures that $\rho=t\tau$ for some $t\in[0,1]$. Since $\rho\left(\left( u^{\eta}_{\lambda}\right)\right)=\tau\left(\left( u^{\eta}_{\lambda}\right)\right)\neq0$, we derive that $\tau=\rho$. According to \cite[Example 5.1.1]{mur}, there exists a unit vector $e_{\eta}\in\mathscr{H}_{\eta}$ such that 
	$\tau\left(\left(u_{\lambda}\right)\right)= \left\langle u_{\eta}e_{\eta},e_{\eta}\right\rangle_{\mathscr{H}_{\eta}}$.
\end{example}
We conclude this section with the following open problem:
\begin{problem}
	Let $\mathscr{E}$ be a Hilbert $\mathfrak{A}$-module. Is $\mathscr{E}_{\tau}={\mathscr{E}}/{\mathcal{N}^{\mathscr{E}}_{\tau}}$ for each $\tau\in \mathcal{P}\left(\mathfrak{A}\right)$? Moreover, can one find some classes of Hilbert $C^*$-modules for which the equality holds?
\end{problem}
\section{Extensions of Schatten norms}
Let $\mathscr{H}$ and $\mathscr{K}$ be Hilbert spaces. For $T\in\mathbb{B}\left(\mathscr{H},\mathscr{K}\right)$ and $k\geq1$, the \emph{Schatten $k$-norm} of $T$ is defined by $\left\| T\right\|_{\left( k\right) }:=\left(\mathrm {Tr}~ \left| T\right| ^{k}\right)^{{1}/{k}}$. We denote the space of $k$-Schatten operators by $\mathcal{L}^k\left(\mathscr{H},\mathscr{K}\right):=\left\lbrace T\in\mathbb{B}\left(\mathscr{H},\mathscr{K}\right): \left\| T\right\|_{\left( k\right)}<\infty \right\rbrace$. Kittaneh \cite{Kittaneh5} investigated some inequalities for Schatten $k$-norms. In \cite[p. 15]{sajjad}, the definition of $\left\|\cdot\right\|_{\left[ k\right]}$, introduced by Stern and van Suijlekom \cite[Theorem 3.28]{sche}, is generalized in the framework of all Hilbert $C^*$-modules. In this context, for $T\in\mathcal{L}(\mathscr{E})$, we define 
$$\left\|T\right\|_{\left[ k\right]}:=\sup_{\tau\in\mathcal{P}\left(\mathfrak{A}\right)}\left\|T_{\mathscr{E}_{\tau}}\right\|_{\left(k\right)},$$
where $T_{\mathscr{E}_{\tau}}$ is constructed according to \eqref{T}. We demonstrated that $\left\|T\right\|_{\left[k\right]}$ is a norm on $\mathcal{L}^k(\mathscr{E}):=\left\lbrace T\in\mathcal{L}(\mathscr{E}):\left\|T\right\|_{\left[ k\right]}<\infty \right\rbrace $ with the expected properties of Schatten $k$-norms. 
 
The concept of an orthonormal basis is crucial in the standard definition of Schatten $k$-norms. Stern and van Suijlekom \cite{sche} introduced Schatten classes for countably generated Hilbert $C^*$-modules over commutative $C^*$-algebras using the concept of a frame. However, it is possible that a Hilbert $C^*$-module may not have a frame or an orthonormal basis, which limits their adaptability for extension to general Hilbert $C^*$-modules. Nevertheless, this limitation can be explored by utilizing absolutely summing operators; refer to \cite{2012} for more details.

In \cite[Definition 4.1]{sajjad}, the \emph{trace class operators} and the \emph{Hilbert--Schmidt operators} were extended in the context of Hilbert $C^*$-modules. These classes are denoted by $\left( \tilde{\Pi}_1\left(\mathscr{E},\mathscr{F}\right),\tilde{\pi}_1(\cdot)\right)$ and $\left( \tilde{\Pi}_2\left(\mathscr{E},\mathscr{F}\right),\tilde{\pi}_2(\cdot)\right)$, respectively. Furthermore,  we obtain a further extension of the Schatten $k$-norms by replacing $\mu_n$ with $\mu^{\mathscr{E}}_n$ and allowing $k \geq 1$ in \cite[Definition~3.3]{sajjad}.
\begin{definition}\label{pp}
	Let $\mathscr{E}$ and $\mathscr{F}$ be Hilbert $\mathfrak{A}$-modules. For $T \in\mathcal{L}(\mathscr{E},\mathscr{F})$, we define
	\begin{equation}\label{1}
		\pi^{\mathscr{E},\mathscr{F}}_k(T): = \sup\left\lbrace \left\|\sum_{i=1}^n \left\langle\left|T\right|^kx_i,x_i\right\rangle\right\|^{1/k}: \left( x_1,\ldots,x_n\right) \in\left( \mathscr{E}^n,\mu^{\mathscr{E}}_n\right)_{[1]}, n\in\mathbb{N}\right\rbrace.
	\end{equation}	
\end{definition}
Notice that $\left|T\right|=\left(T^*T\right)^{1/2}\in\mathcal{L}(\mathscr{E})$. The set of all adjointable operators with $	\pi^{\mathscr{E},\mathscr{F}}_k(T) < \infty$ is denoted by ${\Pi}_{k}(\mathscr{E},\mathscr{F})$. We write ${\pi}^{\mathscr{E}}_k(\cdot)$ and ${\Pi}_k(\mathscr{E})$ for ${\pi}^{\mathscr{E},\mathscr{E}}_k(\cdot)$ and ${\Pi}_k(\mathscr{E},\mathscr{E})$, respectively.
\begin{proposition}
Suppose that $\mathscr{E}$, $\mathscr{F}$, and $\mathscr{G}$ are Hilbert $\mathfrak{A}$-modules. The following statements hold.
\begin{enumerate}
	\item
	$\left({\Pi}_{2}(\mathscr{E},\mathscr{F}),{\pi}^{\mathscr{E},\mathscr{F}}_{2}(\cdot)\right) $ and 	$\left({\Pi}_{1}(\mathscr{E}),{\pi}^{\mathscr{E}}_{1}(\cdot)\right) $ are Banach spaces. 
	\item
	For $T\in \mathcal{L}(\mathscr{E},\mathscr{F})$ and $S\in \mathcal{L}(\mathscr{F},\mathscr{G})$, it holds that ${\pi}^{\mathscr{E},\mathscr{G}}_2(ST)\leq \left\|S\right\|{\pi}^{\mathscr{E},\mathscr{F}}_2(T)$ and ${\pi}^{\mathscr{E},\mathscr{G}}_2(ST)\leq {\pi}^{\mathscr{F},\mathscr{G}}_2(S)\left\|T\right\|$.
		\item
	For $T,S\in \mathcal{L}(\mathscr{E})$, we have
	$\left\|T\right\|\leq{\pi}^{\mathscr{E}}_1(T)$, ${\pi}^{\mathscr{E}}_1(TS)\leq \left\|T\right\|{\pi}^{\mathscr{E}}_1(S)$ and $\left({\pi}^{\mathscr{E}}_2(T)\right) ^2\leq\left\|T \right\|{\pi}^{\mathscr{E}}_1(T)$.
	\item 
	For $T\in\mathcal{L}(\mathscr{E},\mathscr{F})$, where $	\mathscr{E}=\ker(T)\oplus\overline{\rm{ran}(T^*)}$ and $	\mathscr{F}=\ker(T^*)\oplus\overline{\rm{ran}(T)}$, it holds that ${\pi}^{\mathscr{E},\mathscr{F}}_2(T)={\pi}^{\mathscr{E},\mathscr{F}}_2(T^*).$
	\item 
	For $T,S\in\mathcal{L}(\mathscr{E})$, where $	\mathscr{E}=\ker(T)\oplus\overline{\rm{ran}(T^*)}=\ker(T^*)\oplus\overline{\rm{ran}(T)}$, it holds that ${\pi}^{\mathscr{E}}_1(T)={\pi}^{\mathscr{E}}_1(T^*)$ and ${\pi}^{\mathscr{E}}_1(TS)\leq\left\|S\right\|{\pi}^{\mathscr{E}}_1(T).$ 
\end{enumerate}
\end{proposition}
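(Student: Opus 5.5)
Everything will be reduced to the two descriptions of $\mu^{\mathscr{E}}_n$ in \eqref{Proposition 3.6} and \eqref{Lemma3.7}, together with the identity
\[
\sum_{i}\left\langle\left|T\right|^kx_i,x_i\right\rangle=\sum_i\left|\left|T\right|^{k/2}x_i\right|^2 .
\]
For $k=2$ this gives $\sum_i\langle|T|^2x_i,x_i\rangle=\sum_i\langle Tx_i,Tx_i\rangle$, which is the form used in the Hilbert--Schmidt parts.

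\textbf{Part (1).} Positive homogeneity is immediate from the definition. For the triangle inequality:
\begin{itemize}
\item For $k=2$: the quantity $\|\sum_i\langle Tx_i,Tx_i\rangle\|^{1/2}$ is the norm of $(Tx_i)$ in $l^2_n(\mathscr{F})$. By Minkowski in that Hilbert module, $\pi_2(T+S)\le\pi_2(T)+\pi_2(S)$.
\item For $k=1$: the expression involves $|T|$, which is not additive, so I use a variational formula. I claim
\[
\pi_1(T)=\sup\Bigl\|\sum_i\langle Tx_i,y_i\rangle\Bigr\|,
\]
with the supremum over tuples with $\mu_n(x),\mu_n(y)\le1$.
  \begin{itemize}
  \item For $\le$: write $T=V|T|$ when a polar decomposition exists. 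Otherwise use $|T|x=\lim T^*(\ldots)$ via $|T|=T^*T(|T|+\varepsilon)^{-1}$ type approximations, and apply Cauchy--Schwarz in $l^2_n$ with \eqref{Proposition 3.4(1)}.
  \item For $\ge$: use $|\sum\langle |T|^{1/2}x,|T|^{1/2}\ldots\rangle|$, which is linear in $T$, hence subadditive.
  \end{itemize}
\item Completeness: take a $\pi_k$-Cauchy sequence. It is norm Cauchy by (3) (and, for $k=2$, by $\|T\|\le\pi_2(T)$, taking $n=1$). So it converges in operator norm. Each finite sum is norm-continuous in $T$, and a standard Fatou argument on the supremum gives convergence in $\pi_k$.
\end{itemize}
The $k=1$ triangle inequality is the step I expect to be the main obstacle: $T\mapsto|T|$ is not well behaved and polar decompositions need not exist in $\mathcal{L}(\mathscr{E})$.

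\textbf{Part (2).}
\begin{itemize}
\item First inequality: $\sum\langle STx_i,STx_i\rangle\le\|S\|^2\sum\langle Tx_i,Tx_i\rangle$.
\item Second inequality: $(Tx_i)$ has $\mu^{\mathscr{F}}_n\le\|T\|\mu^{\mathscr{E}}_n(x)$. Indeed, by \eqref{Lemma3.7}, $\|\sum Tx_ia_i\|\le\|T\|\|\sum x_ia_i\|$. Then apply the definition of $\pi_2(S)$ to the normalized tuple.
\end{itemize}

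\textbf{Part (3).}
\begin{itemize}
\item $\|T\|\le\pi_1(T)$: take $n=1$ and $x$ in the unit ball. Then $\|\langle|T|x,x\rangle\|$ has supremum $\||T|\|=\|T\|$.
\item $\pi_1(TS)\le\|T\|\pi_1(S)$: use the variational formula from (1), moving $T$ to the other side as $T^*$, which maps the unit $\mu$-ball into $\|T\|$ times itself.
\item $\pi_2(T)^2\le\|T\|\pi_1(T)$: by operator monotonicity, $|T|^2\le\|T\|\,|T|$, so the sums compare as positive elements.
\end{itemize}

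\textbf{Parts (4) and (5).} The orthogonal-complement hypotheses give a polar decomposition $T=V|T|$ with $V$ a partial isometry in $\mathcal{L}(\mathscr{E},\mathscr{F})$, by the standard result for adjointable operators with complemented ranges. Then $|T^*|=V|T|V^*$.
\begin{itemize}
\item For (4): $\sum\langle T^*y_i,T^*y_i\rangle=\sum\langle|T|V^*y_i,|T|V^*y_i\rangle$, and $\mu(V^*y)\le\mu(y)$, which gives $\pi_2(T^*)\le\pi_2(T)$. Symmetry gives equality.
\item For (5): similarly $\langle|T^*|y_i,y_i\rangle=\langle|T|V^*y_i,V^*y_i\rangle$, giving $\pi_1(T^*)=\pi_1(T)$. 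Then $\pi_1(TS)=\pi_1(S^*T^*)\le\|S\|\pi_1(T^*)=\|S\|\pi_1(T)$ by (3), provided $TS$ also satisfies the complementation needed. Otherwise argue directly with the variational formula of (1).
\end{itemize}
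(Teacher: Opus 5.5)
Your plan is sound, but the step you flag as the obstacle is not actually carried out, and the tools you list are attached to the wrong half of your variational identity
\[
\pi^{\mathscr{E}}_1(T)=\sup\Bigl\{\Bigl\|\sum_{i=1}^n\langle Tx_i,y_i\rangle\Bigr\| : \mu^{\mathscr{E}}_n(x_1,\dots,x_n)\le 1,\ \mu^{\mathscr{E}}_n(y_1,\dots,y_n)\le 1,\ n\in\mathbb{N}\Bigr\}.
\]
The $k=1$ case of (1) rests on the half $\|\sum_i\langle Tx_i,y_i\rangle\|\le\pi^{\mathscr{E}}_1(T)$. Your bullet for it only records that $\sum_i\langle Tx_i,y_i\rangle$ is linear in $T$. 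That makes the right-hand side subadditive, but it does not explain why it is dominated by $\pi^{\mathscr{E}}_1(T)$. The $\varepsilon$-approximation and Cauchy--Schwarz you list under the other half are exactly what this half needs.

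Put $V_\varepsilon:=T(|T|+\varepsilon)^{-1}$. It is a contraction, and $T-V_\varepsilon|T|=\varepsilon V_\varepsilon$. Then
\[
\sum_i\langle V_\varepsilon|T|x_i,y_i\rangle=\bigl\langle(|T|^{1/2}x_i)_i,\,(|T|^{1/2}V_\varepsilon^*y_i)_i\bigr\rangle_{\mathit{l}^2_n(\mathscr{E})}.
\]
Cauchy--Schwarz in $\mathit{l}^2_n(\mathscr{E})$ bounds its norm by $\|\sum_i\langle|T|x_i,x_i\rangle\|^{1/2}\,\|\sum_i\langle|T|V_\varepsilon^*y_i,V_\varepsilon^*y_i\rangle\|^{1/2}$. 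This is at most $\pi^{\mathscr{E}}_1(T)$, because $\mu^{\mathscr{E}}_n(V_\varepsilon^*y_1,\dots,V_\varepsilon^*y_n)\le\|V_\varepsilon\|\,\mu^{\mathscr{E}}_n(y_1,\dots,y_n)\le 1$ by \eqref{Lemma3.7}. (It is not \eqref{Proposition 3.4(1)} that gives this; that one concerns right multiplication by elements of $\mathfrak{A}$.)

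The essential point is that the second factor is controlled by $\pi^{\mathscr{E}}_1(T)$ itself, not by $\pi^{\mathscr{E}}_1(T^*)$. So no polar decomposition is needed. Since $\|x_i\|,\|y_i\|\le 1$, replacing $T$ by $V_\varepsilon|T|$ costs at most $n\varepsilon$, and you let $\varepsilon\to 0$.

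The other half needs no Cauchy--Schwarz. The operator $V_\varepsilon^*T=|T|^2(|T|+\varepsilon)^{-1}$ is within $\varepsilon$ of $|T|$. Hence $\sum_i\langle|T|x_i,x_i\rangle$ is within $n\varepsilon$ of $\sum_i\langle Tx_i,V_\varepsilon x_i\rangle$, and $\mu^{\mathscr{E}}_n(V_\varepsilon x_1,\dots,V_\varepsilon x_n)\le 1$.

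With this in place, your route differs from the paper's. The paper gives no argument and only refers to \cite{sajjad}. The complementation hypotheses in (4) and (5) indicate that the intended route uses genuine polar decompositions $T=V|T|$. Your approximate partial isometries $V_\varepsilon$ make those hypotheses superfluous:
\begin{itemize}
\item The identity above is invariant under $T\mapsto T^*$ (interchange $x$ and $y$ and take adjoints). So $\pi^{\mathscr{E}}_1(T^*)=\pi^{\mathscr{E}}_1(T)$ holds for all $T\in\mathcal{L}(\mathscr{E})$.
\item Likewise $\pi^{\mathscr{E}}_1(TS)\le\|S\|\pi^{\mathscr{E}}_1(T)$ holds for all $T,S\in\mathcal{L}(\mathscr{E})$, so your proviso about $TS$ disappears.
\item In (4), approximating $T^*$ by $|T|V_\varepsilon^*$ gives $\pi_2(T^*)\le\pi_2(T)$ without hypotheses as well.
\end{itemize}
Your polar-decomposition arguments for (4) and (5) are correct under the stated assumptions, but they are not needed. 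The remaining parts (the case $k=2$ of (1), completeness, (2), and the first and third claims of (3)) are fine as written. One small correction there: $|T|^2\le\|T\|\,|T|$ is continuous functional calculus ($t^2\le\|T\|t$ on $\mathrm{sp}(|T|)$), not operator monotonicity.
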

\begin{proof}
	The proofs are similar to those in \cite[Section 4]{sajjad}, so we do not include them.
\end{proof}
\begin{remark}\label{remark}
Let $\mathscr{E}$ and $\mathscr{F}$ be Hilbert $C^*$-modules over a commutative $C^*$-algebra $\mathfrak{A}$. For $k\geq1$,
$\left({\Pi}_{k}(\mathscr{E},\mathscr{F}),{\pi}^{\mathscr{E},\mathscr{F}}_{k}(\cdot)\right)$ is a Banach space.	To see this, let us define the map $\left\|\cdot\right\|_{\mathit{l}^k_n(\mathscr{F})}: \mathscr{F}^n\rightarrow\mathbb{R}^+$ by
$\left\|\left(x_1,\dots,x_n\right)\right\|_{\mathit{l}^k_n(\mathscr{F})}=\left\|\sum_{i=1}^{n}\left|x_i\right|^k \right\|^{{1}/{k}}$ for $n\in\mathbb{N}$.
It follows from commutativity of $\mathfrak{A}$ that the sequence $\left( \left( \mathscr{F}^n,\left\|\cdot\right\|_{\mathit{l}^2_n(\mathscr{F})}\right) :n\in\mathbb{N}\right)$ is a power-normed space. In view of \cite[p. 11]{sajjad}, ${\Pi}_{k}(\mathscr{E},\mathscr{F})=\mathcal{ML}(\mathscr{E},\mathscr{F})$ and ${\pi}^{\mathscr{E},\mathscr{F}}_{k}(T)=\left\|T\right\|_{mb}$ for $T\in{\Pi}_{k}(\mathscr{E},\mathscr{F})$. Here, $\left( \mathcal{ML}(\mathscr{E},\mathscr{F}),\left\|T\right\|_{mb}\right)$ is the space of adjointable multi-bounded operators calculated with respect to the power-normed spaces $\left(\left( \mathscr{E}^n,\mu^\mathscr{E}_n\right):n\in\mathbb{N}\right) $ and $\left( \left( \mathscr{F}^n,\left\|\cdot\right\|_{\mathit{l}^k_n(\mathscr{F})}\right) :n\in\mathbb{N}\right)$. It follows from \cite[p. 5]{sajjad} that $\left({\Pi}_{k}(\mathscr{E},\mathscr{F}),{\pi}^{\mathscr{E},\mathscr{F}}_{k}(\cdot)\right)$ is a Banach space. The reader may consult \cite{sajjad} for undefined notations.
\end{remark}
In fact, in the case when $\mathfrak{A}$ is noncommutative, we are not sure if ${\pi}^{\mathscr{E}}_{k}(\cdot)$ forms a norm on ${\Pi}_{k}(\mathscr{E})$ for any $k\geq1$. Furthermore, some Schatten properties may not hold true in $\left({\Pi}_{k}(\mathscr{E}),{\pi}^{\mathscr{E}}_{k}(\cdot)\right)$. However, we are able to establish some interesting Schatten properties. 
\begin{proposition}
	Let $\mathscr{E}$ be a Hilbert $\mathfrak{A}$-module. Let $n\in\mathbb{N}$ and $x_1,\dots,x_n\in\mathscr{E}$. Then
	\begin{equation}\label{mu}
		\mu^{\mathscr{E}}_{n}(x_1,\dots,x_n)=\mu^{\mathscr{E}^{\sharp}}_{n}(x_1,\dots,x_n).
	\end{equation} 
\end{proposition}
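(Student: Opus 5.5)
We prove the two inequalities separately, identifying $x_i\in\mathscr{E}$ with $x_i\otimes1_u+\mathcal{M}_{\iota}\in\mathscr{E}^{\sharp}$ through the isometric embedding noted before Proposition \ref{isimetric}. The most convenient form of $\mu_n$ is \eqref{Lemma3.7}. A second useful form comes from \eqref{Proposition 3.6}: $\mu^{\mathscr{E}}_n(x_1,\dots,x_n)$ is the norm of the operator $R:\mathscr{E}\to\mathit{l}^2_n(\mathfrak{A})$, $y\mapsto(\langle x_i,y\rangle)_i$. Equivalently, it is the norm of its adjoint $(a_i)\mapsto\sum_i x_ia_i$, i.e. of the row $(\theta_{x_1},\dots,\theta_{x_n})$.

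For the inequality $\mu^{\mathscr{E}}_n\le\mu^{\mathscr{E}^{\sharp}}_n$, we use \eqref{Lemma3.7} on both sides. Take $a_1,\dots,a_n\in\mathfrak{A}$ with $\|\sum a_i^*a_i\|\le1$. Their images in $\mathfrak{A}^{**}$ satisfy the same bound, since $\iota$ is isometric. By \eqref{sharp} we have $\sum_i(x_i\otimes1_u)a_i=\sum_i x_i\otimes a_i=\sum_i x_ia_i\otimes1_u$ modulo $\mathcal{M}_\iota$. This element has the same norm as $\sum_i x_ia_i$ in $\mathscr{E}$, again by the isometric embedding. Hence every quantity in the supremum defining $\mu^{\mathscr{E}}_n$ also appears in the supremum defining $\mu^{\mathscr{E}^{\sharp}}_n$.

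For the reverse inequality, set $\lambda=\mu^{\mathscr{E}}_n(x_1,\dots,x_n)$. By \eqref{Proposition 3.6} it suffices to show that for every $\eta\in\mathscr{E}^{\sharp}$,
\begin{equation*}
\sum_i\langle\eta,x_i\rangle\langle x_i,\eta\rangle\le\lambda^2\langle\eta,\eta\rangle .
\end{equation*}
Both sides are continuous in $\eta$, so we may take $\eta=\sum_{j=1}^m y_j\otimes b_j$ with $y_j\in\mathscr{E}$ and $b_j\in\mathfrak{A}^{**}$. Put $b=(b_1,\dots,b_m)^t$ as a column. By \eqref{sharp}, $\langle x_i,\eta\rangle=\sum_j\langle x_i,y_j\rangle b_j$. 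The desired inequality then becomes
\begin{equation*}
b^*\Big(\big[\textstyle\sum_i\langle y_j,x_i\rangle\langle x_i,y_l\rangle\big]_{j,l}\Big)b\le\lambda^2\, b^*\big[\langle y_j,y_l\rangle\big]_{j,l}\,b .
\end{equation*}
So it is enough to prove the matrix inequality
\begin{equation*}
\Big[\sum_i\langle y_j,x_i\rangle\langle x_i,y_l\rangle\Big]_{j,l}\le\lambda^2\big[\langle y_j,y_l\rangle\big]_{j,l}\quad\text{in }M_m(\mathfrak{A}),
\end{equation*}
because conjugating by the column $b$ preserves positivity in $M_m(\mathfrak{A}^{**})$. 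This is the main step. We would prove it by passing to the Hilbert $M_m(\mathfrak{A})$-module $M_{1,m}(\mathscr{E})$ with $Y=(y_1,\dots,y_m)$. There $R\otimes\mathrm{id}_m$ sends $Y$ to the $n\times m$ matrix $(\langle x_i,y_l\rangle)$, and the left-hand matrix is $\langle (R\otimes \mathrm{id})Y,(R\otimes \mathrm{id})Y\rangle$. The operator $R\otimes\mathrm{id}_m$ is adjointable with norm $\|R\|=\lambda$, since amplification of adjointable operators to $M_{1,m}$ is a $*$-homomorphism of $C^*$-algebras. The standard inequality $\langle SY,SY\rangle\le\|S\|^2\langle Y,Y\rangle$ (Lance) then gives exactly the matrix inequality. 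The key points to verify are that $R$ is adjointable, with adjoint $(a_i)\mapsto\sum x_ia_i$, and that $\|R\|=\lambda$ by \eqref{Proposition 3.6}.

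Combining the two inequalities gives \eqref{mu}. The obstacle we expect is the density/continuity reduction to elementary tensors, together with the careful check that positivity passes through the embedding $M_m(\mathfrak{A})\subseteq M_m(\mathfrak{A}^{**})$. The latter holds because $\iota$ is a $*$-homomorphism, and $*$-homomorphisms preserve positivity at every matrix level.
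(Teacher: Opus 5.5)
Your proof is correct. The inequality $\mu^{\mathscr{E}}_n\le\mu^{\mathscr{E}^{\sharp}}_n$ is essentially the paper's: you phrase it via \eqref{Lemma3.7}, the paper via the inner-product-preserving embedding $x\mapsto x\otimes 1_u+\mathcal{M}_\iota$.

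For the reverse inequality you take a genuinely different route. The paper first reduces to unital $\mathfrak{A}$. It then approximates each coefficient $a_j\in\mathfrak{A}^{**}\cong\mathfrak{A}_u''$ by nets $a_j^{\lambda}\in\mathfrak{A}$ converging strongly on $\mathscr{H}_u$, and applies \eqref{Proposition 3.6} to $y=\sum_j y_ja_j^{\lambda}\in\mathscr{E}$. Finally it tests both sides against vectors $e\in\mathscr{H}_u$ and passes to the limit. So it depends on the concrete von Neumann algebra picture of $\mathfrak{A}^{**}$ and on a density theorem.

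You instead observe that $R^*R\le\lambda^2$ in $\mathcal{L}(\mathscr{E})$ already holds at every matrix level, i.e.\ $[\sum_i\langle y_j,x_i\rangle\langle x_i,y_l\rangle]_{j,l}\le\lambda^2[\langle y_j,y_l\rangle]_{j,l}$ in $M_m(\mathfrak{A})$. You then compress by the column of coefficients in $\mathfrak{A}^{**}$. This is purely algebraic: it uses no topology on $\mathfrak{A}^{**}$, no simultaneous approximating nets, and no unitization. The same argument shows that $\mu_n$ does not increase under any extension $\mathscr{E}\otimes_{\phi}\mathfrak{B}$ along a $*$-homomorphism $\phi$ into a unital $C^*$-algebra, with equality when $\phi$ is injective.

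One point should be stated carefully, since $R$ maps between two different modules: the norm of the amplification. Either use $\|(R\otimes\mathrm{id}_m)^*(R\otimes\mathrm{id}_m)\|=\|(R^*R)\otimes\mathrm{id}_m\|\le\|R^*R\|$, or argue more directly. For the direct argument, write $\lambda^2-R^*R=S^*S$ with $S\in\mathcal{L}(\mathscr{E})$. Then the difference of the two matrices is the Gram matrix $[\langle Sy_j,Sy_l\rangle]_{j,l}$, which is positive in $M_m(\mathfrak{A})$. This gives the matrix inequality without introducing $M_{1,m}(\mathscr{E})$ at all.
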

\begin{proof}
	Since the $\mathfrak{A}$-module map $\mathscr{E}\rightarrow\mathscr{E}^{\sharp}$
	defined by $x\mapsto x\otimes1_{u}+\mathcal{M}_{\iota}$
	is isometric, we arrive at $\mu^{\mathscr{E}}_{n}(x_1,\dots,x_n)\leq\mu^{\mathscr{E}^{\sharp}}_{n}(x_1,\dots,x_n)$.
	
	Conversely, let us suppose that $1_{u}\in\mathfrak{A}$. Pick
	\begin{equation}\label{alpha}
		\alpha\in\left\lbrace\lambda>0: \sum_{i=1}^{n} \left\langle y,x_i\right\rangle\left\langle x_i,y\right\rangle\leq\lambda^2\left\langle y,y\right\rangle \textrm{~for~all~} y\in\mathscr{E}\right\rbrace.
	\end{equation}
	 Suppose that $$\sum_{j=1}^{m}y_j\otimes a_j+\mathcal{M}_{\iota}\in\frac{\mathscr{E}\otimes\mathfrak{A}^{**}}{\mathcal{M}_{\iota}}.$$
	
	 By using \cite[Lemma 4.1.4]{mur} the $C^*$-algebra $\mathfrak{A}$ is dense in $\mathfrak{A}^{**}\cong\mathfrak{A}_{u}^{\prime\prime}$ with respect to the strong topology on $\mathbb{B}\left(\mathscr{H}_{u}\right)$. Thus, for each $1\leq i\leq n$, there exists a net $\left(a^{\lambda}_i\right)_\lambda$ in $\mathfrak{A}$ such that $a^{\lambda}_i\rightarrow a_i$ strongly. It follows from \eqref{alpha} that
		$$\sum_{i=1}^{n} \left\langle \sum_{j=1}^{m}y_ja^{\lambda}_j,x_i\right\rangle\left\langle x_i,\sum_{j=1}^{m}y_ja^{\lambda}_j\right\rangle\leq\alpha^2\left\langle \sum_{j=1}^{m}y_ja^{\lambda}_j,\sum_{j=1}^{m}y_ja^{\lambda}_j\right\rangle$$
		or, equivalently,
		$$\sum_{i=1}^{n} \sum_{r,s=1}^{m}\left( a^{\lambda}_r\right)^* \left\langle y_r,x_i\right\rangle\left\langle x_i,y_s\right\rangle a^{\lambda}_s\leq\alpha^2\left(\sum_{r,s=1}^{m} \left( a^{\lambda}_r\right)^* \left\langle y_r,y_s\right\rangle a^{\lambda}_s\right).$$
	Hence, for every $e\in\mathscr{H}_{u}$, we have
	$$\sum_{i=1}^{n} \sum_{r,s=1}^{m}\left\langle\left( a^{\lambda}_r\right)^* \left\langle y_r,x_i\right\rangle\left\langle x_i,y_s\right\rangle a^{\lambda}_se,e\right\rangle\leq\alpha^2\sum_{r,s=1}^{m} \left\langle\left( a^{\lambda}_r\right)^* \left\langle y_r,y_s\right\rangle a^{\lambda}_se,e\right\rangle$$
	or, equivalently,
	$$\sum_{i=1}^{n} \sum_{r,s=1}^{m}\left\langle \left\langle y_r,x_i\right\rangle\left\langle x_i,y_s\right\rangle a^{\lambda}_se,a^{\lambda}_re\right\rangle\leq\alpha^2\sum_{r,s=1}^{m} \left\langle \left\langle y_r,y_s\right\rangle a^{\lambda}_se,a^{\lambda}_re\right\rangle. $$
	
	By taking limits, we get 
	$$\sum_{i=1}^{n} \sum_{r,s=1}^{m}\left\langle \left\langle y_r,x_i\right\rangle\left\langle x_i,y_s\right\rangle a_se,a_re\right\rangle\leq\alpha^2\sum_{r,s=1}^{m} \left\langle \left\langle y_r,y_s\right\rangle a_se,a_re\right\rangle,$$
	which is equivalent to
		$$\sum_{i=1}^{n} \sum_{r,s=1}^{m}\left\langle\left( a_r\right)^* \left\langle y_r,x_i\right\rangle\left\langle x_i,y_s\right\rangle a_se,e\right\rangle\leq\alpha^2\sum_{r,s=1}^{m} \left\langle\left( a_r\right)^* \left\langle y_r,y_s\right\rangle a_se,e\right\rangle$$
		and
		\begin{align*}
		&\sum_{i=1}^{n} \left\langle \sum_{j=1}^{m}y_j\otimes a_j+\mathcal{M}_{\iota},x_i\right\rangle\left\langle x_i,\sum_{j=1}^{m}y_j\otimes a_j+\mathcal{M}_{\iota}\right\rangle\\& \qquad\leq\alpha^2\left\langle \sum_{j=1}^{m}y_j\otimes a_j+\mathcal{M}_{\iota},\sum_{j=1}^{m}y_j\otimes a_j+\mathcal{M}_{\iota}\right\rangle.
	\end{align*}
	
	Since $\left(\mathscr{E}\otimes\mathfrak{A}^{**}\right)/\mathcal{M}_{\iota}$ is dense in $\mathscr{E}^{\sharp}$, we derive that $\sum_{i=1}^{n} \left\langle z,x_i\right\rangle\left\langle x_i,z\right\rangle\leq\alpha^2\left\langle z,z\right\rangle$ for all $z\in\mathscr{E}^{\sharp}$. Thus, 
	$$		\alpha\in\left\lbrace\lambda>0: \sum_{i=1}^{n} \left\langle z,x_i\right\rangle\left\langle x_i,z\right\rangle\leq\lambda^2\left\langle z,z\right\rangle \textrm{~for~all~} z\in\mathscr{E}^{\sharp}\right\rbrace.$$
	Therefore,
	\begin{align*}
		&\mu^{\mathscr{E}^{\sharp}}_{n}(x_1,\dots,x_n)=\min\left\lbrace\lambda>0: \sum_{i=1}^{n} \left\langle z,x_i\right\rangle\left\langle x_i,z\right\rangle\leq\lambda^2\left\langle z,z\right\rangle \textrm{~for~all~} z\in\mathscr{E}^{\sharp}\right\rbrace\\&\leq\min\left\lbrace\lambda>0: \sum_{i=1}^{n} \left\langle y,x_i\right\rangle\left\langle x_i,y\right\rangle\leq\lambda^2\left\langle y,y\right\rangle \textrm{~for~all~} y\in\mathscr{E}\right\rbrace=\mu^{\mathscr{E}}_{n}(x_1,\dots,x_n).
	\end{align*}
 In the case where $1_{u}\notin\mathfrak{A}$, we can use $\mathfrak{A}\oplus\mathbb{C}1_{u}$ instead of $\mathfrak{A}$ in virtue of \cite[p. 9]{sajjad}.
\end{proof}
\begin{corollary}\label{important}
	${\pi}^{\mathscr{E}}_k(T)\leq{\pi}^{\mathscr{E}^{\sharp}}_k(T_{\mathscr{E}^{\sharp}})$ for each $T\in\mathcal{L}(\mathscr{E})$ and $k \geq 1$. 
\end{corollary}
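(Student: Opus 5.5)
The plan is to compare the two suprema term by term, using the isometric embedding $\mathscr{E}\to\mathscr{E}^{\sharp}$, $x\mapsto x\otimes 1_u+\mathcal{M}_{\iota}$, together with the preceding proposition, which gives $\mu^{\mathscr{E}}_n=\mu^{\mathscr{E}^{\sharp}}_n$ on tuples from $\mathscr{E}$. Write $\hat{x}:=x\otimes 1_u+\mathcal{M}_{\iota}$. Fix $n\in\mathbb{N}$ and $(x_1,\dots,x_n)\in(\mathscr{E}^n,\mu^{\mathscr{E}}_n)_{[1]}$. By \eqref{mu}, $(\hat{x}_1,\dots,\hat{x}_n)$ lies in the closed unit ball of $(( \mathscr{E}^{\sharp})^n,\mu^{\mathscr{E}^{\sharp}}_n)$. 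It is therefore an admissible tuple in the supremum defining $\pi^{\mathscr{E}^{\sharp}}_k(T_{\mathscr{E}^{\sharp}})$.

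Next we identify the summands. By Proposition \ref{isimetric}, $T\mapsto T_{\mathscr{E}^{\sharp}}$ is an isometric $*$-homomorphism $\mathcal{L}(\mathscr{E})\to\mathcal{L}(\mathscr{E}^{\sharp})$. Such a map commutes with continuous functional calculus: it sends $T^*T$ to $T_{\mathscr{E}^{\sharp}}^*T_{\mathscr{E}^{\sharp}}$, and $*$-homomorphisms commute with $f(\cdot)$ for continuous $f$ on the spectrum. Applying this with $f(t)=t^{k/2}$ gives $(|T|^k)_{\mathscr{E}^{\sharp}}=|T_{\mathscr{E}^{\sharp}}|^k$. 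Then \eqref{T} and \eqref{sharp} yield
\[
\left\langle |T_{\mathscr{E}^{\sharp}}|^k\hat{x}_i,\hat{x}_i\right\rangle=\left\langle (|T|^kx_i)\otimes 1_u+\mathcal{M}_{\iota},x_i\otimes 1_u+\mathcal{M}_{\iota}\right\rangle=1_u\left\langle |T|^kx_i,x_i\right\rangle 1_u=\iota\left(\left\langle |T|^kx_i,x_i\right\rangle\right).
\]
Summing over $i$, and using that $\iota$ is isometric (an injective $*$-homomorphism), we get
\[
\left\|\sum_{i=1}^n\left\langle |T|^kx_i,x_i\right\rangle\right\|^{1/k}=\left\|\sum_{i=1}^n\left\langle |T_{\mathscr{E}^{\sharp}}|^k\hat{x}_i,\hat{x}_i\right\rangle\right\|^{1/k}\le \pi^{\mathscr{E}^{\sharp}}_k(T_{\mathscr{E}^{\sharp}}).
\]
Taking the supremum over all admissible tuples and all $n$ gives the claim.

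The only step needing care is the functional calculus identity $(|T|^k)_{\mathscr{E}^{\sharp}}=|T_{\mathscr{E}^{\sharp}}|^k$. It follows because any $*$-homomorphism between $C^*$-algebras commutes with continuous functional calculus of self-adjoint elements: approximate $t^{k/2}$ uniformly on $[0,\|T\|^2]$ by polynomials. Everything else is a direct consequence of \eqref{mu} and the definitions.
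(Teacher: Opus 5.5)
Your proposal is correct and is the argument the paper leaves implicit. The paper states the corollary without proof, as an immediate consequence of \eqref{mu}: you need its nontrivial direction $\mu^{\mathscr{E}^{\sharp}}_n\le\mu^{\mathscr{E}}_n$, together with the facts that $T\mapsto T_{\mathscr{E}^{\sharp}}$ is a $*$-homomorphism and that $x\mapsto x\otimes 1_u+\mathcal{M}_{\iota}$ preserves inner products. The paper never spells out the identity $(|T|^k)_{\mathscr{E}^{\sharp}}=|T_{\mathscr{E}^{\sharp}}|^k$, and your functional-calculus justification of it is correct.
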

Our next theorem refines Corollary \ref{important}. To establish it, we need some lemmas.
\begin{lemma}\label{Lemma1}
 Suppose that $\tau\in\mathcal{S}(\mathfrak{A})$ and $k\geq1$. Then $\tau(a)^k\leq\tau(a^k)$ for all $a\geq 0$.
\end{lemma}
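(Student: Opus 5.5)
The plan is to reduce the statement to Jensen's inequality for the spectral measure of $a$ in the GNS representation of $\tau$.

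\textbf{Step 1: GNS setup.} Let $\phi_\tau:\mathfrak{A}\rightarrow\mathbb{B}(\mathfrak{A}_\tau)$ be the GNS representation with cyclic unit vector $h_\tau$, as in \eqref{sajjad1}. Then
\[
\tau(b)=\langle h_\tau,\phi_\tau(b)h_\tau\rangle \quad\text{for all } b\in\mathfrak{A}.
\]
Fix $a\geq0$ and set $A:=\phi_\tau(a)$, a positive operator on the Hilbert space $\mathfrak{A}_\tau$. Since $\phi_\tau$ is a $*$-homomorphism, it commutes with the continuous functional calculus, so $\phi_\tau(a^k)=A^k$ for every real $k\geq1$. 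Here $a^k$ is defined by functional calculus on $\sigma(a)\subseteq[0,\infty)$, and $a^k$ stays in $\mathfrak{A}$ even in the non-unital case because $t\mapsto t^k$ vanishes at $0$.

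\textbf{Step 2: spectral measure.} Let $E$ be the spectral measure of $A$. Define the scalar measure $\nu(\Delta):=\langle E(\Delta)h_\tau,h_\tau\rangle$, supported on $\sigma(A)\subseteq[0,\|a\|]$. Because $\|h_\tau\|=1$, $\nu$ is a probability measure. By the spectral theorem,
\[
\tau(a)=\int t\,d\nu(t) \quad\text{and}\quad \tau(a^k)=\int t^k\,d\nu(t).
\]

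\textbf{Step 3: Jensen.} The function $t\mapsto t^k$ is convex on $[0,\infty)$ for $k\geq1$. Jensen's inequality for the probability measure $\nu$ therefore gives
\[
\Big(\int t\,d\nu\Big)^k\leq\int t^k\,d\nu,
\]
which is exactly $\tau(a)^k\leq\tau(a^k)$.

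The only point needing care is that $\nu$ has total mass exactly one; this follows from $\|\tau\|=1$ and the unit vector $h_\tau$.

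As a fallback that avoids measure theory, one can use Hölder's inequality on $\mathfrak{A}_\tau$:
\[
\langle Ah_\tau,h_\tau\rangle\leq\langle A^kh_\tau,h_\tau\rangle^{1/k}\|h_\tau\|^{2(1-1/k)}.
\]
This also follows from Jensen, or from the McCarthy inequality $\langle A x,x\rangle^k\leq\langle A^kx,x\rangle$ for unit vectors $x$, which is the same statement. Since the argument is direct, I do not expect a serious obstacle.
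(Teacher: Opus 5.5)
Your proof is correct and is essentially the paper's argument: both write $\tau$ on the functional calculus of $a$ as integration against a probability measure $\nu$ on $\mathrm{sp}(a)$, and then apply a convexity inequality (the paper uses H\"older with the conjugate exponent, you use Jensen, which amount to the same thing here). The only difference is how $\nu$ is obtained: the paper applies the Riesz--Kakutani theorem to the restriction of $\tau$ to $C^*(a,1)\cong\mathcal{C}(\mathrm{sp}(a))$, which requires passing to the unitization when $\mathfrak{A}$ is non-unital, whereas your GNS spectral measure with the unit cyclic vector $h_\tau$ covers the non-unital case directly.
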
 
\begin{proof}
	First, let $\mathfrak{A}$ be unital. The $C^*$-subalgebra of $\mathfrak{A}$ generated by $a$ and $1$ can be identified with $\mathcal{C}\left(\rm{sp}(a)\right)$, where $\rm{sp}(a)$ denotes the spectrum of $a$. The positive element $a$ is identified by the inclusion map $z\in\mathcal{C}\left(\rm{sp}(a)\right)$. Clearly, $1,z\in\mathcal{C}\left(\rm{sp}(a),\mathbb{R}\right)$, the space of real valued functions in $\mathcal{C}\left(\rm{sp}(a)\right)$. The restriction of $\tau$ on $\mathcal{C}\left(\rm{sp}(a),\mathbb{R}\right)$ is also a state. Therefore,
\begin{equation}
	\tau(a)=\tau|_{\mathcal{C}\left(\rm{sp}(a),\mathbb{R}\right)}(z)=\int_{\rm{sp}(a)}z d\nu,
\end{equation}
where $\nu$ is a measure on $\rm{sp}(a)$ with $\nu(\rm{sp}(a))=1$ by the Riesz--Kakutani theorem. Let $t\geq1$ such that $\frac{1}{k}+\frac{1}{t}=1$. Employing the H\"{o}lder inequality, we have 
\begin{align*}
&\int_{\rm{sp}(a)}zd\nu\leq\left( \int_{\rm{sp}(a)}z^k d\nu\right)^{1/k} \left( \int_{\rm{sp}(a)} d\nu\right)^{1/t}\leq\left( \int_{\rm{sp}(a)}z^k d\nu\right)^{1/k}\\&=\left( \tau|_{\mathcal{C}\left(\rm{sp}(a),\mathbb{R}\right)}(z^k)\right)^{1/k}=\left( \tau(a^k)\right)^{1/k}.
\end{align*}
Thus, we have $\tau(a)^k\leq\tau(a^k)$. In the case when $\mathfrak{A}$ is not unital, we use the minimal unitization $\mathfrak{A}\oplus \mathbb{C}$ of $\mathfrak{A}$ and extend $\tau$ to $\mathfrak{A}\oplus\mathbb{C}$; see \cite[Theorem 3.3.9]{mur}.
\end{proof}

The family of all mutually orthogonal $n$-tuples $\left(x_1,\dots,x_n\right)\subseteq\mathit{l}_n^2(\mathscr{E})$, where $\left\langle x_i,x_i\right\rangle$ is a projection for $1\leq i\leq n$, is denoted by $\mathcal{D}^{\mathscr{E}}_n$. The norm closed convex hull of $\mathcal{D}^{\mathscr{E}}_n$ is denoted by $\overline{\rm co}(\mathcal{D}^{\mathscr{E}}_n)$. By a Hilbert subspace of $\mathscr{E}$, we mean a subspace of $\mathscr{E}$ that is a Hilbert space under the same norm as $\mathscr{E}$.
The next lemma is stated as follows.
\begin{lemma}\label{dim}
	 Suppose that $\mathscr{E}$ is a Hilbert $\mathfrak{A}$-module and $\mathscr{H}$ is a Hilbert subspace of $\mathscr{E}$. 	Let $n\in \mathbb{N}$ and $x_1,\dots,x_n\in\mathscr{H}$. Then 
	 \begin{equation}\label{Hilbert}
	 	\mu^{\mathscr{H}}_{n}(x_1,\dots,x_n)=\mu^{\mathscr{E}}_{n}(x_1,\dots,x_n).
	 \end{equation}
\end{lemma}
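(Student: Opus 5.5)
The plan is to compare two Gram matrices. We regard $\mathscr{H}$ as a Hilbert space whose scalar inner product $(\cdot,\cdot)_{\mathscr{H}}$ is recovered from the norm of $\mathscr{E}$ by polarization. Then $\mu^{\mathscr{H}}_n$ is the classical power-norm:
$$\mu^{\mathscr{H}}_n(x_1,\dots,x_n)=\sup\Big\{\Big\|\sum_{i=1}^n\alpha_ix_i\Big\|:\ \alpha_i\in\mathbb{C},\ \sum_{i=1}^n|\alpha_i|^2\le1\Big\},$$
which is the norm of the map $\mathbb{C}^n\to\mathscr{H}$, $(\alpha_i)\mapsto\sum_i\alpha_ix_i$.

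\emph{The inequality $\mu^{\mathscr{H}}_n\le\mu^{\mathscr{E}}_n$.} I would use \eqref{Lemma3.7}, which expresses $\mu^{\mathscr{E}}_n$ as a supremum of $\|\sum_i x_ia_i\|$ over $a_i\in\mathfrak{A}$ with $\|\sum_i a_i^*a_i\|\le1$. Restricting to scalar coefficients gives $\mu^{\mathscr{H}}_n\le\mu^{\mathscr{E}}_n$ immediately. If $\mathfrak{A}$ is not unital, I would take $a_i=\alpha_ie_\lambda$ for an approximate unit $(e_\lambda)$ and pass to the limit, or unitize as the paper does elsewhere.

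\emph{The inequality $\mu^{\mathscr{E}}_n\le\mu^{\mathscr{H}}_n$.} First I would note, from \eqref{Lemma3.7} or \eqref{Proposition 3.6}, that $\mu^{\mathscr{E}}_n(x_1,\dots,x_n)^2=\|R\|^2$, where $R:\mathit{l}^2_n(\mathfrak{A})\to\mathscr{E}$ is $(a_i)\mapsto\sum_i x_ia_i$. Hence it equals $\|G\|_{M_n(\mathfrak{A})}$ for the Gram matrix $G=(\langle x_i,x_j\rangle)$. Similarly, $\mu^{\mathscr{H}}_n(x_1,\dots,x_n)^2=\|\Gamma\|$ for the scalar Gram matrix $\Gamma=((x_i,x_j)_{\mathscr{H}})$.

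Next, choose an $(\cdot,\cdot)_{\mathscr{H}}$-orthonormal basis $e_1,\dots,e_m$ of $\mathrm{span}\{x_i\}\subseteq\mathscr{H}$ and write $x_i=\sum_j c_{ij}e_j$ with $C=(c_{ij})\in M_{n\times m}(\mathbb{C})$. Then
$$G=\overline{C}\,G_e\,C^{T}\qquad\text{and}\qquad \Gamma=\overline{C}\,C^{T},$$
where $G_e=(\langle e_j,e_l\rangle)\in M_m(\mathfrak{A})$, with the appropriate conjugations. Since $G_e\ge0$, we get $\|G\|\le\|\Gamma\|\,\|G_e\|$. So everything reduces to proving $\|G_e\|\le1$, that is, $\mu^{\mathscr{E}}_m(e_1,\dots,e_m)\le1$ for an orthonormal family in the Hilbert subspace.

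\emph{The main obstacle} is exactly this last claim. The only information available is the Hilbert-space identity
$$\Big\|\sum_j\alpha_je_j\Big\|^2=\Big\|\sum_{j,l}\overline{\alpha_j}\alpha_l\langle e_j,e_l\rangle\Big\|=\sum_j|\alpha_j|^2\qquad\text{for all scalars }\alpha_j,$$
and it must be upgraded to $\mathfrak{A}$-valued coefficients. My first attempt would be to study the positive operator $P=\sum_j\theta_{e_j,e_j}\in\mathcal{K}(\mathscr{E})$, whose norm is $\|G_e\|$. I would compute $\|P\|=\lim_N\|P^N\|^{1/N}$, or alternatively use $\|P\|=\sup_{\|y\|\le1}\|\langle y,Py\rangle\|$ tested against $y$ in the span of the $e_j$. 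The aim is to show that the scalar constraint above forces $\langle e_j,e_l\rangle$ to behave like $\delta_{jl}$ times a common element of norm at most one. As a guiding example, in the model case $\mathscr{H}=\mathscr{E}p_\tau$ one has $\langle e_j,e_l\rangle=\delta_{jl}p_\tau$. A fallback route is to pass to a state $\varphi$ of $\mathfrak{A}$ attaining the relevant norms. Then use Lemma~\ref{Lemma1}-type positivity, together with the parallelogram law on $\mathscr{H}$, to show that $\varphi$ restricted to the $\langle e_j,e_l\rangle$ is compatible with the scalar Gram data, and conclude $\|G_e\|\le1$.
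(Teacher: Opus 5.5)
\paragraph{There is a genuine gap, and the reverse inequality is false as stated.} Your inequality $\mu^{\mathscr{H}}_n\le\mu^{\mathscr{E}}_n$ (scalar coefficients in \eqref{Lemma3.7}) is correct. So is your reduction of the reverse inequality to $\|G_e\|_{M_m(\mathfrak{A})}\le 1$ for an orthonormal family $e_1,\dots,e_m$ of $\mathscr{H}$. The problem is that this last claim is false. None of the routes you sketch can close it, and the lemma itself fails as stated.

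Take $\mathfrak{A}=\mathscr{E}=\mathbb{M}_2(\mathbb{C})$ with $\langle a,b\rangle=a^*b$, and let $\mathscr{H}=\mathrm{span}\{E_{11},E_{12}\}$ (matrix units). Since $\|\alpha_1E_{11}+\alpha_2E_{12}\|=(|\alpha_1|^2+|\alpha_2|^2)^{1/2}$, $\mathscr{H}$ is a Hilbert subspace in which $E_{11},E_{12}$ are orthonormal. Hence $\mu^{\mathscr{H}}_2(E_{11},E_{12})=1$. But by \eqref{mue}, where the supremum is attained at $y=1$,
\[
\mu^{\mathscr{E}}_2(E_{11},E_{12})=\bigl\|E_{11}E_{11}^*+E_{12}E_{12}^*\bigr\|^{1/2}=\|2E_{11}\|^{1/2}=\sqrt{2}.
\]
Equivalently, $G_e=\begin{pmatrix}E_{11}&E_{12}\\ E_{21}&E_{22}\end{pmatrix}$ has norm $2$. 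The scalar identity $\|\sum_j\alpha_je_j\|^2=\sum_j|\alpha_j|^2$ only tests $G_e$ against vectors of the form $(\alpha_j\xi)_j$. It does not force $\langle e_j,e_l\rangle$ to be $\delta_{jl}$ times a common contraction: here $\langle E_{11},E_{12}\rangle=E_{12}\neq0$, and the diagonal entries $E_{11},E_{22}$ differ.

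\paragraph{The paper's proof has the same hole.} It takes $(y_1,\dots,y_n)\in\mathcal{D}^{\mathscr{H}}_n$, which only means orthonormal for the scalar inner product of $\mathscr{H}$. It then uses $\mu^{\mathscr{E}}_n(y_1,\dots,y_n)=\max_i\|y_i\|$, a fact valid for tuples that are orthogonal with respect to the $\mathfrak{A}$-valued inner product. The example above violates it.

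What is true, and all that Theorem \ref{thm1} uses, is the case where $\langle x,y\rangle=(x,y)_{\mathscr{H}}\,p$ on $\mathscr{H}$ for a fixed projection $p$. This holds for $\mathscr{H}=\overline{\mathscr{E}p_\tau}\subseteq\mathscr{E}^{\sharp}$, because $p_\tau a p_\tau=\tau(a)p_\tau$ for $a\in\mathfrak{A}$. In that case $G_e=\mathrm{diag}(p,\dots,p)$, so $\|G_e\|\le1$ and your Gram-matrix reduction finishes the proof at once; indeed $G=(\Gamma_{ij}\,p)_{i,j}$. You should add such a hypothesis rather than try to prove the statement in full generality.
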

\begin{proof}
	First, let us assume that $\dim(\mathscr{H})\geq n$. Pick $\left( y_1,\dots,y_n\right) \in\mathcal{D}^{\mathscr{H}}_n$. It follows from \cite[Lemma 2.4]{sajjad2} that $\mu^{\mathscr{H}}_{n}(y_1,\dots,y_n)=\mu^{\mathscr{E}}_{n}(y_1,\dots,y_n)=\max_{1\leq i\leq n}\left\|y_i\right\|= 1$. From \cite[lemma 3.3]{sajjad2} we deduce that $$\left( \mathscr{H}^n,\mu^{\mathscr{H}}_n\right)_{[1]}=\overline{\rm co}(\mathcal{D}^{\mathscr{H}}_n)\subseteq\left( \mathscr{E}^n,\mu^{\mathscr{E}}_n\right)_{[1]}.$$ 
	This implies that $\mu^{\mathscr{H}}_{n}(x_1,\dots,x_n)\geq\mu^{\mathscr{E}}_{n}(x_1,\dots,x_n)$. The reverse inequality is evident by \eqref{mue}.
	
	 In the case when $\dim(\mathscr{H})=0$, we have $\mathscr{H}=0$ and \eqref{Hilbert} is evident. Now, assume that $1\leq\dim(\mathscr{H})<n$. Then
	 \begin{align*}
	 	\mu^{\mathscr{H}}_{n}(x_1,\dots,x_n)&=\sup\left\lbrace\left\| \sum_{i=1}^{n}x_i\alpha_i\right\| : \alpha_i\in\mathbb{C},\sum_{i=1}^{n}\left|\alpha_i\right|^2\leq 1\right\rbrace\tag{by \eqref{Lemma3.7}}
	 	\\&=\mu^{\mathit{l}_n^2(\mathscr{H})}_{n}(x_1,\dots,x_n)\tag{since $\mathit{l}_n^2(\mathscr{H})$ is a Hilbert space}\\&=\mu^{\mathit{l}_n^2(\mathscr{E})}_{n}(x_1,\dots,x_n)\tag{since $\mathit{l}_n^2(\mathscr{H})\subseteq\mathit{l}_n^2(\mathscr{E})$ and $\dim\left(\mathit{l}_n^2(\mathscr{H})\right)\geq n$}\\&=\sup\left\lbrace\left\| \sum_{i=1}^{n}x_ia_i\right\| : a_i\in\mathfrak{A},\left\|\sum_{i=1}^{n}a^*_ia_i\right\|\leq 1\right\rbrace\tag{by \eqref{Lemma3.7}}\\&=\mu^{\mathscr{E}}_{n}(x_1,\dots,x_n).
	 \end{align*}
\end{proof}
\begin{proposition}\label{Jaegermann}
		Let $T\in\mathbb{B}(\mathscr{H})$ and $k\geq1$. Then 
	${\pi}^{\mathscr{H}}_k(T)=\left\|T\right\|_{\left( k\right)}$.
\end{proposition}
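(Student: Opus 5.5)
We have to show that for $T\in\mathbb{B}(\mathscr{H})$ and $k\geq1$,
$$\sup\Big\{\Big(\sum_{i=1}^n\langle |T|^kx_i,x_i\rangle\Big)^{1/k}:\mu^{\mathscr{H}}_n(x_1,\dots,x_n)\leq1\Big\}=\big(\mathrm{Tr}\,|T|^k\big)^{1/k}.$$
Write $A=|T|^k\geq0$. The plan rests on one observation: in a Hilbert space, \eqref{Proposition 3.6} (or directly \eqref{mue}) identifies $\mu^{\mathscr{H}}_n(x_1,\dots,x_n)$ with the operator norm of the synthesis map $V:\mathbb{C}^n\to\mathscr{H}$, $Ve_i=x_i$. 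Indeed $\sum_i|\langle x_i,y\rangle|^2=\|V^*y\|^2$, so $\mu_n=\|V^*\|=\|V\|$. Hence the constraint $\mu^{\mathscr{H}}_n\leq1$ says exactly that $V$ is a contraction, equivalently $\sum_i\theta_{x_i,x_i}=VV^*\leq I$. This is Jaegermann's setting, and we reprove it directly.

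\emph{Upper bound.} We have
$$\sum_i\langle Ax_i,x_i\rangle=\mathrm{Tr}(V^*AV)=\mathrm{Tr}(A^{1/2}VV^*A^{1/2}).$$
Since $0\leq VV^*\leq I$, this gives $A^{1/2}VV^*A^{1/2}\leq A$. The trace is monotone on positive operators (including the value $+\infty$), so $\sum_i\langle Ax_i,x_i\rangle\leq\mathrm{Tr}\,A=\|T\|_{(k)}^k$. This holds without assuming $T$ compact, since the inequality is trivial when $\mathrm{Tr}\,A=\infty$.

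\emph{Lower bound.} Take any orthonormal family $e_1,\dots,e_n$ in $\mathscr{H}$. By \eqref{Lemma3.7} with scalar coefficients, $\mu_n(e_1,\dots,e_n)=\sup\{\|\sum\alpha_ie_i\|:\sum|\alpha_i|^2\leq1\}=1$, so the family is admissible. Therefore
$$\pi^{\mathscr{H}}_k(T)^k\geq\sum_{i=1}^n\langle Ae_i,e_i\rangle.$$
Fix an orthonormal basis $(e_j)$ of $\mathscr{H}$ and take the supremum over finite subfamilies. Since $\mathrm{Tr}\,A=\sum_j\langle Ae_j,e_j\rangle$ is basis-independent for $A\geq0$ and is the supremum of its finite partial sums, we get $\pi^{\mathscr{H}}_k(T)^k\geq\mathrm{Tr}\,A$. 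This also covers the case $\mathrm{Tr}\,A=\infty$, where both sides are infinite. Combining the two bounds and taking $k$-th roots proves the proposition.

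The only step needing care is the identification $\mu^{\mathscr{H}}_n=\|V\|$ and the resulting operator inequality $VV^*\leq I$. The rest is standard trace monotonicity. Alternatively, one could invoke Lemma \ref{dim} together with the description $(\mathscr{H}^n,\mu_n)_{[1]}=\overline{\rm co}(\mathcal{D}_n^{\mathscr{H}})$. Then, because $x\mapsto\sum_i\langle Ax_i,x_i\rangle$ is convex, its supremum over the ball is attained on orthonormal-type tuples, where the sum is at most $\mathrm{Tr}\,A$.
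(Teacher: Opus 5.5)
Your proof is correct, but it takes a different route from the paper. The paper's argument is a two-line reduction. It quotes Jaegermann's theorem that on a Hilbert space $\pi^{\mathscr{H}}_2(S)=\|S\|_{(2)}$ (the $2$-summing norm equals the Hilbert--Schmidt norm) and applies it to $S=|T|^{k/2}$. Since $|S|^2=|T|^k$, the definition gives $\pi^{\mathscr{H}}_k(T)^k=\pi^{\mathscr{H}}_2(|T|^{k/2})^2=\| |T|^{k/2}\|_{(2)}^2=\mathrm{Tr}\,|T|^k$.

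You instead prove the identity directly for all $k$ at once. Identifying $\mu^{\mathscr{H}}_n(x_1,\dots,x_n)$ with $\|V\|$ for the synthesis operator $V$ turns the admissibility condition into $VV^*\leq I$. The upper bound then follows from $\mathrm{Tr}(V^*AV)=\mathrm{Tr}(A^{1/2}VV^*A^{1/2})\leq\mathrm{Tr}\,A$, and the lower bound from finite orthonormal families, which have $\mu_n=1$. Your $k=2$ case is thus a self-contained proof of exactly the result the paper cites.

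What each approach buys: yours does not depend on the external reference and treats the case $\mathrm{Tr}\,|T|^k=\infty$ explicitly. The paper's is shorter and isolates the principle $\pi_k(T)=\pi_2(|T|^{k/2})^{2/k}$, which it reuses later (in Lemma \ref{commutative}, to pass from a known $\pi_2$ result to all $k$). Your closing alternative via $\overline{\rm co}(\mathcal{D}^{\mathscr{H}}_n)$ and convexity is also sound, but it is not needed.
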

\begin{proof}
	 Jaegermann \cite[Proposition 10.1]{Jaegermann} stated that $\pi^{\mathscr{H}}_2(S)=\left\|S\right\|_{(2)}$ for $S\in\mathbb{B}(\mathscr{H})$. Thus,
\begin{equation*}
\pi^{\mathscr{H}}_k(T)=\pi^{\mathscr{H}}_2(\left|T\right|^\frac{k}{2})^\frac{2}{k}=\left\|\left|T\right|^\frac{k}{2}\right\|_{(2)}^{\frac{2}{k}}=\left(\mathrm {Tr}~ \left| T\right| ^{k}\right)^{\frac{1}{k}}=\left\|T\right\|_{\left( k\right)}.
\end{equation*}
\end{proof}
Our following main result investigates the extension of Proposition \ref{Jaegermann} to the setting of Hilbert $C^*$-modules.It can be interpreted as a result that provides upper and lower bounds for the norm $\left\|T\right\|_{\left[k\right]}$.
\begin{theorem}\label{thm1}
	Let $T\in\mathcal{L}(\mathscr{E})$ and $k\geq1$. Then 
 $${\pi}^{\mathscr{E}}_k(T)\leq\left\|T\right\|_{\left[k\right]}\leq{\pi}^{\mathscr{E}^{\sharp}}_k(T_{\mathscr{E}^{\sharp}})$$.
\end{theorem}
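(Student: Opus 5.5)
We prove the two inequalities separately. Both pass through the Hilbert spaces $\mathscr{E}_\tau$ for $\tau\in\mathcal{P}(\mathfrak{A})$, and use Proposition \ref{Jaegermann} to identify $\|T_{\mathscr{E}_\tau}\|_{(k)}$ with $\pi^{\mathscr{E}_\tau}_k(T_{\mathscr{E}_\tau})$.

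\emph{Lower bound.} Fix $(x_1,\dots,x_n)\in(\mathscr{E}^n,\mu^{\mathscr{E}}_n)_{[1]}$. Since $b:=\sum_i\langle |T|^kx_i,x_i\rangle\geq0$, there is a pure state $\tau$ with $\|b\|=\tau(b)$. The representation $T\mapsto T_{\mathscr{E}_\tau}$ is a $*$-homomorphism, so $|T|^k_{\mathscr{E}_\tau}=|T_{\mathscr{E}_\tau}|^k$. This gives
\[
\tau(b)=\sum_i\big\langle |T_{\mathscr{E}_\tau}|^k\hat x_i,\hat x_i\big\rangle_{\mathscr{E}_\tau},\qquad \hat x_i=x_i+\mathcal{N}^{\mathscr{E}}_\tau .
\]
It remains to check that $\mu^{\mathscr{E}_\tau}_n(\hat x_1,\dots,\hat x_n)\leq 1$. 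By \eqref{Lemma3.7} on the Hilbert space $\mathscr{E}_\tau$, this means $\|\sum_i\alpha_i\hat x_i\|^2\leq1$ whenever $\sum|\alpha_i|^2\leq1$. Now
\[
\Big\|\sum_i\alpha_i\hat x_i\Big\|^2=\tau\Big(\Big|\sum_i x_i\alpha_i\Big|^2\Big)\leq \Big\|\sum_i x_i\alpha_i\Big\|^2\leq \mu^{\mathscr{E}}_n(x)^2\leq1,
\]
where the scalar case of \eqref{Lemma3.7} is used (formally through the unitization, or by approximating $\alpha_i$ with $\alpha_i e_\lambda$ for an approximate unit). Hence, by Proposition \ref{Jaegermann},
\[
\|b\|\leq \pi^{\mathscr{E}_\tau}_k(T_{\mathscr{E}_\tau})^k=\|T_{\mathscr{E}_\tau}\|_{(k)}^k\leq\|T\|_{[k]}^k .
\]
Taking the supremum over all such tuples gives $\pi^{\mathscr{E}}_k(T)\leq\|T\|_{[k]}$. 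Lemma \ref{Lemma1} is not needed for this direction.

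\emph{Upper bound.} Fix $\tau\in\mathcal{P}(\mathfrak{A})$. By Proposition \ref{ped} and the remark following it, $\mathscr{E}_\tau$ is realized isometrically as the Hilbert subspace $\mathscr{H}:=\overline{\mathscr{E}p_\tau}\subseteq\mathscr{E}^\sharp$, via $\hat x\mapsto (x\otimes 1_u)p_\tau=x\otimes p_\tau$. Inner products of elements of $\mathscr{H}$ lie in $p_\tau\mathfrak{A}^{**}p_\tau=\mathbb{C}p_\tau$, with $\langle xp_\tau,yp_\tau\rangle=\tau(\langle x,y\rangle)p_\tau$. This is the same computation as in Proposition \ref{ped}(2), using $\rho=\tilde\tau$ and polarization. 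Moreover $T_{\mathscr{E}^\sharp}$ commutes with right multiplication by $p_\tau$, so $\mathscr{H}$ is invariant under $T_{\mathscr{E}^\sharp}$, and the restriction of $T_{\mathscr{E}^\sharp}$ to $\mathscr{H}$ is $T_{\mathscr{E}_\tau}$. The same holds for $|T|^k$, since $(|T|^k)_{\mathscr{E}^\sharp}=|T_{\mathscr{E}^\sharp}|^k$.

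By Proposition \ref{Jaegermann}, $\|T_{\mathscr{E}_\tau}\|^k_{(k)}$ is the supremum of $\sum_i\langle|T_{\mathscr{E}_\tau}|^k h_i,h_i\rangle$ over tuples with $\mu^{\mathscr{H}}_n(h)\leq1$. Lemma \ref{dim}, applied with $\mathscr{H}\subseteq\mathscr{E}^\sharp$, gives $\mu^{\mathscr{E}^\sharp}_n(h)=\mu^{\mathscr{H}}_n(h)\leq1$. Also
\[
\sum_i\big\langle |T_{\mathscr{E}^\sharp}|^kh_i,h_i\big\rangle_{\mathscr{E}^\sharp}=\Big(\sum_i\big\langle|T_{\mathscr{E}_\tau}|^kh_i,h_i\big\rangle\Big)p_\tau ,
\]
and this element has norm equal to the scalar sum. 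Therefore $\|T_{\mathscr{E}_\tau}\|_{(k)}\leq\pi^{\mathscr{E}^\sharp}_k(T_{\mathscr{E}^\sharp})$, and taking the supremum over $\tau$ gives the upper bound.

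\emph{Main obstacle.} The delicate step is the identification in the upper bound. One must show that the scalar inner product on $\mathscr{H}$ agrees with the $\mathfrak{A}^{**}$-valued one up to the factor $p_\tau$, so that $\mathscr{H}$ is a genuine Hilbert subspace in the sense required by Lemma \ref{dim}. One must also show that compressing $|T_{\mathscr{E}^\sharp}|^k$ to $\mathscr{H}$ yields exactly $|T_{\mathscr{E}_\tau}|^k$. This requires extending the isometry of Proposition \ref{ped}(2) from $\mathscr{E}p_\tau$ to its closure and checking that the operator calculus is compatible on that closure.
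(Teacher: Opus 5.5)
Your proof is correct in substance and follows the paper's route. The upper bound is the paper's argument: realize $\mathscr{E}_\tau$ as $\overline{\mathscr{E}p_\tau}\subseteq\mathscr{E}^{\sharp}$, restrict $|T_{\mathscr{E}^{\sharp}}|^k$ to it, and compare $\mu$'s. You write it with arbitrary $h_i$ in the closure, which is cleaner than the paper's passage to tuples $x_ip_\tau$. The lower bound differs only slightly. The paper picks $\tau$ norming $b^{1/k}$, so it needs Lemma \ref{Lemma1} to get $\tau(b^{1/k})\leq\tau(b)^{1/k}$. It then compares $\mu^{\mathscr{E}_\tau}_n$ with $\mu^{\mathscr{E}}_n$ through \eqref{Proposition 3.6} and $|\tau(c)|^2\leq\tau(c^*c)$. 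Since $\|b^{1/k}\|=\|b\|^{1/k}$, your choice of $\tau$ norming $b$, together with the scalar test via \eqref{Lemma3.7}, is shorter and does make Lemma \ref{Lemma1} unnecessary.

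One caution concerns the step you (like the paper) hand to Lemma \ref{dim}. For an arbitrary Hilbert subspace that lemma is false, and the inequality that fails, $\mu^{\mathscr{E}}_n\leq\mu^{\mathscr{H}}_n$, is exactly the one you use. The reason is that orthogonality in $\mathscr{H}$ need not be orthogonality for the module-valued inner product. Here is a counterexample.
\begin{enumerate}
\item Take $\mathscr{E}=\mathbb{M}_2(\mathbb{C})$ over itself and let $\mathscr{H}=\{se_{11}+te_{12}\}$ be its first row. Then $\|se_{11}+te_{12}\|=(|s|^2+|t|^2)^{1/2}$, so $\mathscr{H}$ is a Hilbert subspace.
\item In $\mathscr{H}$ the matrix units $e_{11},e_{12}$ are orthonormal, so $\mu^{\mathscr{H}}_2(e_{11},e_{12})=1$.
\item Testing \eqref{mue} with $y=1$ gives $\sum_j|\langle e_{1j},1\rangle|^2=2e_{11}$, hence $\mu^{\mathscr{E}}_2(e_{11},e_{12})\geq\sqrt2$.
\end{enumerate}

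What rescues your situation is the identity you already derived. Write $(\cdot,\cdot)$ for the inner product of $\mathscr{H}=\overline{\mathscr{E}p_\tau}$; then $\langle h,h'\rangle_{\mathscr{E}^\sharp}=(h,h')\,p_\tau$. Let $G=[(h_i,h_j)]$ be the Gram matrix, so that $\|G\|=\mu^{\mathscr{H}}_n(h)^2$. For $a_i\in\mathfrak{A}^{**}$ with $\|\sum_ia_i^*a_i\|\leq1$,
\[
\Big\|\sum_i h_ia_i\Big\|^2=\Big\|\sum_{i,j}G_{ij}(p_\tau a_i)^*(p_\tau a_j)\Big\|\leq\|G\|\,\Big\|\sum_i a_i^*p_\tau a_i\Big\|\leq\|G\|.
\]
Then \eqref{Lemma3.7} for $\mathscr{E}^\sharp$ yields $\mu^{\mathscr{E}^\sharp}_n(h)\leq\mu^{\mathscr{H}}_n(h)\leq1$.

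So the factor-$p_\tau$ structure is not what makes $\mathscr{H}$ a Hilbert subspace in the sense of Lemma \ref{dim}, as your closing paragraph suggests; any subspace carrying a Hilbert norm qualifies. It is what makes the $\mu$-comparison true. Prove that comparison directly as above rather than citing the lemma, and your argument is complete.
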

\begin{proof}
Proof of the first inequality: Pick $n\in\mathbb{N}$ and $x_1,\ldots,x_n\in\mathscr{E}$ such that $\mu^{\mathscr{E}}_n(x_1,\ldots,x_n)\leq1$. It follows from \cite[Theorem 5.1.11]{mur} that there exists $\tau\in\mathcal{P}\left(\mathfrak{A}\right)$ such that 
$$\tau\left(\left( \sum_{i=1}^{n}\left\langle\left|T\right|^kx_i,x_i \right\rangle\right)^{\frac{1}{k}} \right)=\left\|\left( \sum_{i=1}^{n}\left\langle\left|T\right|^kx_i,x_i \right\rangle\right)^{\frac{1}{k}} \right\|.$$ 	
Since $\mathscr{E}_{\tau}$ is the Hilbert completion of $\mathscr{E}/\mathcal{N}^{\mathscr{E}}_{\tau}$, we infer from \eqref{Proposition 3.6} that
	\begin{align*}
		&\mu^{\mathscr{E}_{\tau}}_n(x_1+\mathcal{N}^{\mathscr{E}}_{\tau},\ldots,x_n+\mathcal{N}^{\mathscr{E}}_{\tau})\\&=\min\left\lbrace \lambda>0: \sum_{i=1}^{n}\left| \left\langle x_i+\mathcal{N}^{\mathscr{E}}_{\tau},x+\mathcal{N}^{\mathscr{E}}_{\tau}\right\rangle\right|^2\leq \lambda^2\left| x+\mathcal{N}^{\mathscr{E}}_{\tau}\right|^2 \textrm{~for~all~} x\in\mathscr{E}\right\rbrace \\&=\min\left\lbrace \lambda>0: \sum_{i=1}^{n}\left| \tau\left( \left\langle x_i,x\right\rangle\right)\right|^2\leq \lambda^2\tau\left( \left\langle x,x\right\rangle\right) \textrm{~for~all~} x\in\mathscr{E}\right\rbrace \\&\leq\min\left\lbrace \lambda>0: \tau\left(\sum_{i=1}^{n}\left| \left\langle x_i,x\right\rangle\right|^2\right)\leq \lambda^2\tau\left( \left\langle x,x\right\rangle\right) \textrm{~for~all~} x\in\mathscr{E}\right\rbrace \tag{by Lemma \ref{Lemma1}}\\&\leq\min\left\lbrace \lambda>0: \sum_{i=1}^{n}\left| \left\langle x_i,x\right\rangle\right|^2\leq \lambda^2 \left\langle x,x\right\rangle \textrm{~for~all~} x\in\mathscr{E}\right\rbrace \\& =\mu^{\mathscr{E}}_n(x_1,\ldots,x_n)\leq 1.
	\end{align*}
 We have
	\begin{align*}
		\left\|\left( \sum_{i=1}^{n}\left\langle\left|T\right|^kx_i,x_i \right\rangle\right)^{\frac{1}{k}} \right\|&=\tau\left(\left( \sum_{i=1}^{n}\left\langle\left|T\right|^kx_i,x_i \right\rangle\right) ^{\frac{1}{k}}\right)\\&\leq\tau\left( \sum_{i=1}^{n}\left\langle\left|T\right|^kx_i,x_i \right\rangle\right)^{\frac{1}{k}}\tag{by Lemma \ref{Lemma1}}\\&
		=\left( \sum_{i=1}^{n} \left\langle \left( \left|T\right|^k\right)_{\mathscr{E}_\tau} \left( x_i+\mathcal{N}^{\mathscr{E}}_{\tau}\right) ,x_i+\mathcal{N}^{\mathscr{E}}_{\tau}\right\rangle\right)^{\frac{1}{k}} \\&=\left( \sum_{i=1}^{n} \left\langle \left| T_{\mathscr{E}_\tau}\right|^k \left( x_i+\mathcal{N}^{\mathscr{E}}_{\tau}\right) ,x_i+\mathcal{N}^{\mathscr{E}}_{\tau}\right\rangle\right)^{\frac{1}{k}}\\&\leq{\pi}^{\mathscr{E}_\tau}_k(T_{\mathscr{E}_\tau})= \left\|T_{\mathscr{E}_\tau}\right\|_{\left(k\right)}\tag{by Proposition \ref{Jaegermann}}.
	\end{align*}
	Hence ${\pi}^{\mathscr{E}}_k(T)\leq\sup_{\tau\in\mathcal{P}\left( \mathfrak{A}\right)}\left\|T_{\mathscr{E}_\tau}\right\| _{\left(k\right)}=\left\|T\right\|_{\left[ k\right]}$.

Proof of the second inequality: Let $\tau\in\mathcal{P}\left(\mathfrak{A}\right)$. By Lemma \ref{ped}, there exists a minimal projection $p_{\tau}\in\mathfrak{A}^{**}$ such that $\mathscr{E}/\mathcal{N}^{\mathscr{E}}_{\tau}\cong\mathscr{E}p_{\tau}$. We deduce from Proposition \ref{Jaegermann} that
	\begin{align*}
		&\left\|T_{\mathscr{E}_\tau}\right\| _{\left(k\right)}= \sup\left\lbrace \left\|\sum_{i=1}^n \left\langle\left|T_{\mathscr{E}_\tau}\right|^ky_i,y_i\right\rangle\right\|^{1/k}: \left( y_1,\ldots,y_n\right) \in\left( \mathscr{E}_{\tau}^n,\mu^{\mathscr{E}_{\tau}}_n\right)_{[1]}, n\in\mathbb{N}\right\rbrace\\&=\sup\left\lbrace \left\|\sum_{i=1}^n \left\langle\left|T_{\mathscr{E}_\tau}\right|^ky_i,y_i\right\rangle\right\|^{1/k}: \left( y_1,\ldots,y_n\right) \in\left( \left(\overline{\mathscr{E}p_{\tau}}\right) ^n,\mu^{\overline{\mathscr{E}p_{\tau}}}_n\right)_{[1]}, n\in\mathbb{N}\right\rbrace\tag{since $\mathscr{E}_{\tau}=\overline{\mathscr{E}p_{\tau}}$ }\\&=\sup\left\lbrace \left\|\sum_{i=1}^n \left\langle\left|T_{\mathscr{E}_\tau}\right|^kx_ip_{\tau},x_ip_{\tau}\right\rangle\right\|^{1/k}: \mu_n^{\overline{\mathscr{E}p_{\tau}}}\left( x_1p_{\tau},\ldots,x_np_{\tau}\right)\leq1, x_i\in\mathscr{E}, n\in\mathbb{N}\right\rbrace\\&=\sup\left\lbrace \left\|\sum_{i=1}^n \left\langle\left|T_{\mathscr{E}^{\sharp}}\right|^kx_ip_{\tau},x_ip_{\tau}\right\rangle\right\|^{1/k}: \mu_n^{\mathscr{E}^{\sharp}}\left( x_1p_{\tau},\ldots,x_np_{\tau}\right)\leq1, x_i\in\mathscr{E}, n\in\mathbb{N}\right\rbrace\tag{by Lemma \ref{dim}}.
	\end{align*}
	Hence $\left\|T_{\mathscr{E}_\tau}\right\| _{\left(k\right)}\leq{\pi}^{\mathscr{E}^{\sharp}}_k(T_{\mathscr{E}^{\sharp}})$.	Therefore, $\left\|T\right\|_{\left[k\right]}\leq{\pi}^{\mathscr{E}^{\sharp}}_k(T_{\mathscr{E}^{\sharp}})$.
\end{proof}
\begin{remark}
Suppose that $\mathscr{E}$ is a Hilbert $C^*$-module over a $C^*$-algebra $\mathfrak{A}$ and $\mathfrak{A}$ is reflexive. It is known that $\mathfrak{A}$ must be finite-dimensional. Therefore, $\mathfrak{A}$ is of the form $\prod_{1\leq i\leq m}\mathbb{M}_{n_i}(\mathbb{C})=\Sigma_{1\leq i\leq m}\mathbb{M}_{n_i}(\mathbb{C})$.
 In this case, $\mathscr{E}^{\sharp}=\mathscr{E}\otimes_{\iota}\mathfrak{A}^{**}=\mathscr{E}\otimes_{\iota}\mathfrak{A}=\mathscr{E}$. Thus, Proposition \ref{isimetric} implies that the map $\mathcal{L}\left(\mathscr{E}\right)\rightarrow\mathcal{L}\left(\mathscr{E}^{\sharp}\right) $ defined by $T\mapsto T_{\mathscr{E}^{\sharp}}$ is an isometric $*$-isomorphism. Theorem \ref{thm1} ensures that ${\pi}^{\mathscr{E}}_k(T)\leq\left\|T\right\|_{\left[k\right]}\leq{\pi}^{\mathscr{E}^{\sharp}}_k(T_{\mathscr{E}^{\sharp}})={\pi}^{\mathscr{E}}_k(T)$. Therefore, ${\pi}^{\mathscr{E}}_k(T)=\left\|T\right\|_{\left[k\right]}$
 for all $k\geq1$. 
\end{remark}
Note that $\mathbb{K}\left(\mathscr{H}\right)^{\sharp}=\mathbb{B}\left(\mathscr{H}\right)\neq\mathbb{K}\left(\mathscr{H}\right)$ in the case where $\mathscr{H}$ is an infinite-dimensional Hilbert space. Thus, the condition $\mathscr{E}^{\sharp}=\mathscr{E}$ is not necessary to establish ${\pi}^{\mathscr{E}}_k(T)=\left\|T\right\|_{\left[k\right]}$ as illustrated in the following example for Hilbert $C^*$-modules over $C^*$-algebras of compact operators. To present this example, we need a lemma.

\begin{lemma}\label{minimal}
	Let $u\in\prod_{\lambda\in\Lambda}\mathbb{B}\left(\mathscr{H}_{\lambda}\right)$. Then $\left\|u\right\|=\sup\left\|pup\right\|$, where the supremum is taken over all minimal projections $p\in\prod_{\lambda\in\Lambda}\mathbb{B}\left(\mathscr{H}_{\lambda}\right)$. 
\end{lemma}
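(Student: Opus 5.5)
We need to show $\|u\| = \sup_p \|pup\|$ over minimal projections $p$ of $\prod_\lambda \mathbb{B}(\mathscr{H}_\lambda)$. The inequality $\sup_p\|pup\|\leq\|u\|$ is immediate, since $\|pup\|\leq\|p\|\|u\|\|p\|\leq\|u\|$. For the reverse inequality, the plan is to identify the minimal projections explicitly and then reduce to a single coordinate. A projection $p=(p_\lambda)$ in the product satisfies $p\left(\prod_{\lambda}\mathbb{B}(\mathscr{H}_\lambda)\right)p=\mathbb{C}p$ exactly when a single coordinate $p_\eta$ is nonzero and that coordinate is a rank-one projection $\theta_{e,e}$ with $e\in\mathscr{H}_\eta$ a unit vector. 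Indeed, if two coordinates were nonzero, the central projections of the product would give a two-dimensional corner. If $p_\eta$ had rank at least two, then $p_\eta\mathbb{B}(\mathscr{H}_\eta)p_\eta\cong\mathbb{B}(p_\eta\mathscr{H}_\eta)$ would not be one-dimensional. Conversely, such a $p$ is clearly minimal. This is the same identification used in the preceding example via \cite[p. 55]{mur}.

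For such a $p$ (supported at $\eta$, with $p_\eta=\theta_{e,e}$), we get $pup=\langle u_\eta e,e\rangle p$. Hence $\|pup\|=|\langle u_\eta e,e\rangle|$, and
$$\sup_p\|pup\|=\sup_{\lambda\in\Lambda}\ \sup_{\|e\|=1,\ e\in\mathscr{H}_\lambda}|\langle u_\lambda e,e\rangle|=\sup_\lambda w(u_\lambda),$$
where $w$ denotes the numerical radius. Since $\|u\|=\sup_\lambda\|u_\lambda\|$ by the definition of the product norm, the task reduces to comparing $w(u_\lambda)$ with $\|u_\lambda\|$ coordinatewise.

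This comparison is the main obstacle. For general operators, one only has $\|u_\lambda\|\leq 2w(u_\lambda)$, and equality $\|u_\lambda\|=w(u_\lambda)$ holds for self-adjoint, and more generally normal, operators. As literally stated, the lemma fails for a nonnormal $u$; for example, the nilpotent $2\times 2$ matrix gives $w=1/2$ against norm $1$. I would therefore prove it for $u\geq 0$ (or $u$ self-adjoint), which is the only case presumably needed downstream, since the lemma will be applied to positive elements such as $\langle x,x\rangle$ or $\sum_i\langle|T|^kx_i,x_i\rangle$. In that case $\|u_\lambda\|=\sup_{\|e\|=1}|\langle u_\lambda e,e\rangle|$. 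Given $\varepsilon>0$, choose $\lambda$ with $\|u_\lambda\|>\|u\|-\varepsilon$, then a unit vector $e\in\mathscr{H}_\lambda$ with $|\langle u_\lambda e,e\rangle|>\|u_\lambda\|-\varepsilon$. The corresponding minimal projection $p$ then satisfies $\|pup\|>\|u\|-2\varepsilon$, which completes the proof in the self-adjoint case.
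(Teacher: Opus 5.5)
Your proposal is correct. It follows the paper's route, and it catches a real error in the statement.

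The paper's entire proof is the remark that every minimal projection of $\prod_{\lambda}\mathbb{B}(\mathscr{H}_\lambda)$ has the form $p=\theta_{e_\lambda,e_\lambda}$, for a unit vector $e_\lambda$ in a single coordinate $\mathscr{H}_\lambda$. That is exactly your identification. The paper stops there, implicitly assuming $\sup_{\|e\|=1}|\langle u_\lambda e,e\rangle|=\|u_\lambda\|$. As you observe, the remark only gives $\sup_p\|pup\|=\sup_\lambda w(u_\lambda)$, and that assumption fails for nonnormal operators.

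Your $2\times 2$ nilpotent example is a genuine counterexample. Take $\Lambda$ a singleton and $\mathscr{H}=\mathbb{C}^2$: every rank-one projection gives $\|pup\|\leq 1/2$, while $\|u\|=1$. So the lemma as literally stated is false, and no argument could prove it in that generality.

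Your restricted version is proved correctly. It assumes every $u_\lambda$ is normal, in particular $u\geq 0$. It is also all the paper actually uses. In Example \ref{exam1} the lemma is applied only to a positive element of $\mathfrak{A}^{**}\cong\prod_\lambda\mathbb{B}(\mathscr{H}_\lambda)$, namely $\bigl(\sum_{i}\langle|T_{\mathscr{E}^{\sharp}}|^k z_i,z_i\rangle\bigr)^{1/k}$ with $z_i\in\mathscr{E}^{\sharp}$.
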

\begin{proof}
	Note that $p=\theta_{e_{\lambda},e_{\lambda}}$ for some $\lambda\in\Lambda$ and unit vector $e_{\lambda}\in \mathscr{H}_{\lambda}$.
\end{proof}
\begin{example}\label{exam1}
	Let $\mathscr{E}$ be a Hilbert $C^*$-module over a $C^*$-algebra of the form $\Sigma_{\lambda\in\Lambda}\mathbb{K}\left(\mathscr{H}_{\lambda}\right)$. We prove that ${\pi}^{\mathscr{E}}_k(T)={\pi}^{\mathscr{E}^{\sharp}}_k(T_{\mathscr{E}^{\sharp}})$ for each $T\in\mathcal{L}(\mathscr{E})$ and $k \geq 1$. By virtue of Corollary \ref{important}, it is enough to show that ${\pi}^{\mathscr{E}}_k(T)\geq{\pi}^{\mathscr{E}^{\sharp}}_k(T_{\mathscr{E}^{\sharp}})$. Pick $\varepsilon>0$, there exists $\left( \sum_{j=1}^{m}x^{(i)}_j\otimes a^{(i)}_j\right)_i\in\mathscr{E}^{\sharp}$ such that
$$\mu_n^{\mathscr{E}^{\sharp}}\left(\sum_{j=1}^{m}x^{(1)}_j\otimes a^{(1)}_j,\dots,\sum_{j=1}^{m}x^{(n)}_j\otimes a^{(n)}_j\right)\leq 1$$
and
$$ {\pi}^{\mathscr{E}^{\sharp}}_k(T_{\mathscr{E}^{\sharp}})-\frac{\varepsilon}{2}\leq\left\| \left( \sum_{i=1}^{n} \left\langle \left| T_{\mathscr{E}^{\sharp}}\right|^k \left( \sum_{j=1}^{m}x^{(i)}_j\otimes a^{(i)}_j\right),\left( \sum_{j=1}^{m}x^{(i)}_j\otimes a^{(i)}_j\right)\right\rangle\right)^{\frac{1}{k}}\right\|.$$
	 We derive from Lemma \ref{minimal} that there exists a minimal projection $p\in\prod_{\lambda\in\Lambda}\mathbb{B}\left(\mathscr{H}_{\lambda}\right)$ such that 
		\begin{align*}
	&\hspace{-1cm}\left\| \left( \sum_{i=1}^{n} \left\langle \left| T_{\mathscr{E}^{\sharp}}\right|^k \left( \sum_{j=1}^{m}x^{(i)}_j\otimes a^{(i)}_jp\right),\left( \sum_{j=1}^{m}x^{(i)}_j\otimes a^{(i)}_jp\right)\right\rangle\right)^{\frac{1}{k}}\right\|\\&=\left\| p\left( \sum_{i=1}^{n} \left\langle \left| T_{\mathscr{E}^{\sharp}}\right|^k \left( \sum_{j=1}^{m}x^{(i)}_j\otimes a^{(i)}_j\right),\left( \sum_{j=1}^{m}x^{(i)}_j\otimes a^{(i)}_j\right)\right\rangle\right)^{\frac{1}{k}}p\right\| \\&\geq\left\|\left( \sum_{i=1}^{n} \left\langle \left| T_{\mathscr{E}^{\sharp}}\right|^k \left( \sum_{j=1}^{m}x^{(i)}_j\otimes a^{(i)}_j\right),\left( \sum_{j=1}^{m}x^{(i)}_j\otimes a^{(i)}_j\right)\right\rangle\right)^{\frac{1}{k}}\right\|-\frac{\varepsilon}{2}\\
&\geq{\pi}^{\mathscr{E}^{\sharp}}_k(T_{\mathscr{E}^{\sharp}})-{\varepsilon}.
	\end{align*}
	Since $\Sigma_{\lambda\in\Lambda}\mathbb{K}\left(\mathscr{H}_{\lambda}\right)$ is an ideal of $\prod_{\lambda\in\Lambda}\mathbb{B}\left(\mathscr{H}_{\lambda}\right)$, we observe that $a^{(i)}_jp\in\Sigma_{\lambda\in\Lambda}\mathbb{K}\left(\mathscr{H}_{\lambda}\right)$. Thus, we identify $\sum_{j=1}^{m}x^{(i)}_j\otimes a^{(i)}_jp$ with $\sum_{j=1}^{m}x^{(i)}_ja^{(i)}_jp$. Moreover, by using Proposition \ref{mu},
		\begin{align*}
	&\mu_n^{\mathscr{E}}\left(\sum_{j=1}^{m}x^{(1)}_ja^{(1)}_jp,\dots,\sum_{j=1}^{m}x^{(n)}_j a^{(n)}_jp\right)=	\mu_n^{\mathscr{E}^{\sharp}}\left(\sum_{j=1}^{m}x^{(1)}_j\otimes a^{(1)}_jp,\dots,\sum_{j=1}^{m}x^{(n)}_j\otimes a^{(n)}_jp\right)\\&\leq\mu_n^{\mathscr{E}^{\sharp}}\left(\sum_{j=1}^{m}x^{(1)}_j\otimes a^{(1)}_j,\dots,\sum_{j=1}^{m}x^{(n)}_j \otimes a^{(n)}_j\right)\leq 1\tag{by using \eqref{Proposition 3.4(1)}}.
	\end{align*}
Hence, we deduce that ${\pi}^{\mathscr{E}}_k(T)\geq{\pi}^{\mathscr{E}^{\sharp}}_k(T_{\mathscr{E}^{\sharp}})-\varepsilon$. Therefore, ${\pi}^{\mathscr{E}}_k(T)={\pi}^{\mathscr{E}^{\sharp}}_k(T_{\mathscr{E}^{\sharp}})$.
\end{example}
Now, we recall some definitions from \cite[Definition 2.3]{sche}.
 In the context of Banach $\mathfrak{A}$-modules, a countable subset $\mathbb{E}$ of $\mathscr{E}$ is said to be \emph{a set of generators of the Hilbert $C^*$-module} $\mathscr{E}$ if the $\mathfrak{A}$-linear span of $\mathbb{E}$ is norm-dense in $\mathscr{E}$. 

We say that a sequence $\left( f_i: i\in\mathbb{N}\right) $ in $\mathscr{E}$ is a standard normalized frame or simply a \emph{frame} if 
$$\left\langle x,y\right\rangle=\sum_{i=1}^{\infty}\left\langle x,f_i\right\rangle\left\langle f_i,y\right\rangle$$
in the norm topology for each $x,y\in\mathscr{E}$; see \cite[p. 21]{comm}. In fact, a Hilbert $C^*$-module $\mathscr{E}$ need not always possess a frame, but it is known that a countably generated Hilbert $C^*$-module $\mathscr{E}$ over a commutative $C^*$-algebra $\mathfrak{A}$ possesses a frame; see \cite{ASAD}.

\begin{lemma}\label{frame}
	Let $\mathscr{E}$ be a Hilbert $\mathfrak{A}$-module that has a frame $\left( f_i: i\in\mathbb{N}\right)$. Then $\left( f_i: i\in\mathbb{N}\right)$ is a frame for $\mathscr{E}^{\sharp}$.
\end{lemma}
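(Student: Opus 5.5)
We have to show that, with $f_i$ identified with $f_i\otimes 1_u+\mathcal{M}_{\iota}$ in $\mathscr{E}^{\sharp}$,
$$\langle z,w\rangle=\sum_{i=1}^{\infty}\langle z,f_i\rangle\langle f_i,w\rangle \quad\text{in norm, for all } z,w\in\mathscr{E}^{\sharp}.$$
The plan has three steps: verify the identity on elementary tensors using \eqref{sharp}, extend it by sesquilinearity to the algebraic quotient $(\mathscr{E}\otimes_{alg}\mathfrak{A}^{**})/\mathcal{M}_{\iota}$, and pass to the completion $\mathscr{E}^{\sharp}$ with a uniform estimate. Throughout, $\langle x\otimes a, f_i\rangle = a^*\langle x,f_i\rangle$ by \eqref{sharp}, since $f_i$ corresponds to $f_i\otimes 1_u$.

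\textbf{Elementary tensors.} Take $z=x\otimes a$ and $w=y\otimes b$ with $x,y\in\mathscr{E}$ and $a,b\in\mathfrak{A}^{**}$. Then
$$\sum_{i\le N}\langle z,f_i\rangle\langle f_i,w\rangle = a^*\Big(\sum_{i\le N}\langle x,f_i\rangle\langle f_i,y\rangle\Big)b.$$
Multiplication by fixed $a^*$ and $b$ is norm continuous. The frame property in $\mathscr{E}$ says the middle sum converges in norm to $\langle x,y\rangle$, so the whole expression converges to $a^*\langle x,y\rangle b=\langle z,w\rangle$. By sesquilinearity the identity, with norm convergence, holds for all finite sums $z=\sum_j x_j\otimes a_j$ and $w=\sum_l y_l\otimes b_l$. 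Elements of $\mathcal{M}_{\iota}$ pair to zero, so everything is well defined on the quotient.

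\textbf{Extension to the completion.} This is the main obstacle: we need a bound that is uniform in $N$ in order to approximate. Define $S_N(z):=(\langle f_i,z\rangle)_{i\le N}\in \mathit{l}^2_N(\mathfrak{A}^{**})$. For $z$ in the dense submodule, the partial sums $\sum_{i\le N}\langle z,f_i\rangle\langle f_i,z\rangle$ are positive and increase to $\langle z,z\rangle$. In the $C^*$-algebra order this gives
$$\sum_{i\le N}\langle z,f_i\rangle\langle f_i,z\rangle\le\langle z,z\rangle, \qquad\text{hence}\qquad \|S_N z\|\le\|z\|.$$
This is a Bessel inequality, valid uniformly in $N$. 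By continuity it extends to every $z\in\mathscr{E}^{\sharp}$, since each $z\mapsto\langle f_i,z\rangle$ is continuous.

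Now fix $z,w\in\mathscr{E}^{\sharp}$ and $\varepsilon>0$, and choose $z_0,w_0$ in the dense submodule with $\|z-z_0\|,\|w-w_0\|<\varepsilon$. Write
$$\sum_{i\le N}\langle z,f_i\rangle\langle f_i,w\rangle=\langle S_Nz,S_Nw\rangle$$
for the $\mathit{l}^2_N(\mathfrak{A}^{**})$ inner product, and apply Cauchy--Schwarz in $\mathit{l}^2_N(\mathfrak{A}^{**})$. This gives
$$\Big\|\langle S_Nz,S_Nw\rangle-\langle S_Nz_0,S_Nw_0\rangle\Big\|\le \varepsilon(\|w\|+\|z_0\|),$$
uniformly in $N$. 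The same bound holds for $\|\langle z,w\rangle-\langle z_0,w_0\rangle\|$. Since $\langle S_Nz_0,S_Nw_0\rangle\to\langle z_0,w_0\rangle$ by the first step, an $\varepsilon/3$ argument shows that $\sum_{i\le N}\langle z,f_i\rangle\langle f_i,w\rangle\to\langle z,w\rangle$ in norm. Hence $(f_i)$ is a frame for $\mathscr{E}^{\sharp}$.
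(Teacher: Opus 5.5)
Your proposal is correct and follows the paper's proof. You check the frame identity on $(\mathscr{E}\otimes_{alg}\mathfrak{A}^{**})/\mathcal{M}_{\iota}$ using \eqref{sharp} and the frame property in $\mathscr{E}$, then extend it to $\mathscr{E}^{\sharp}$ by density; the paper compresses that last step into ``a standard argument,'' and your Bessel bound $\|S_N z\|\le\|z\|$, uniform in $N$, combined with Cauchy--Schwarz is a valid way to fill it in.
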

\begin{proof}
	Let	$\sum_{j=1}^{m}x_j\otimes a_j+\mathcal{M}_{\iota},\sum_{j=1}^{m}y_j\otimes b_j+\mathcal{M}_{\iota} \in\left( {\mathscr{E}\otimes\mathfrak{A}^{**}}\right) /{\mathcal{M}_{\iota}}$. It follows that
	\begin{align*}
		&\left\langle\sum_{j=1}^{m}x_j\otimes a_j+\mathcal{M}_{\iota},\sum_{j=1}^{m}y_j\otimes b_j+\mathcal{M}_{\iota}\right\rangle=\sum_{r,s=1}^{m}a^*_r \left\langle x_r,y_s\right\rangle b_s\\&=\sum_{r,s=1}^{m}a^*_r \sum_{i=1}^{\infty}\left\langle x_r,f_i\right\rangle\left\langle f_i,y_s\right\rangle b_s\tag{$x_r,y_s\in\mathscr{E}$}\\&=\sum_{i=1}^{\infty}\left\langle \sum_{j=1}^{m}x_j\otimes a_j+\mathcal{M}_{\iota},f_i\right\rangle\left\langle f_i,\sum_{j=1}^{m}y_j\otimes b_j+\mathcal{M}_{\iota}\right\rangle.	
	\end{align*}
	Since $\left( {\mathscr{E}\otimes\mathfrak{A}^{**}}\right) /{\mathcal{M}_{\iota}}$ is norm-dense in $\mathscr{E}^{\sharp}$, a standrad argument shows that $\left( f_i: i\in\mathbb{N}\right)$ is a frame for $\mathscr{E}^{\sharp}$.
\end{proof}
\begin{lemma}\label{commutative}
	 Let $\mathscr{E}$ be a Hilbert $C^*$-module over a commutative $C^*$-algebra possessing a frame $\left( f_i: i\in\mathbb{N}\right)$. For $k\geq1$, the series $\sum_{i=1}^{\infty}\left\langle \left|T \right|^k f_i,f_i\right\rangle$ converges weakly in $\mathfrak{A}^{**}$ if and only if $T\in \Pi_k(\mathscr{E})$. Further, $${\pi}_k^{\mathscr{E}}(T)=\left\| \sum_{i=1}^{\infty}\left\langle \left|T \right|^kf_i,f_i\right\rangle\right\|^{1/k}.$$
\end{lemma}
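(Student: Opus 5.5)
We work with the positive operator $S:=|T|^k\in\mathcal{L}(\mathscr{E})$ and the partial sums $s_N:=\sum_{i=1}^{N}\langle Sf_i,f_i\rangle\geq0$ in $\mathfrak{A}$. Since $\mathfrak{A}$ is commutative, $\langle Sx,x\rangle=|S^{1/2}x|^2$. The plan has two steps: first identify the $\mu^{\mathscr{E}}_n$-unit ball with tuples dominated by the frame, then compare arbitrary tuples with finite pieces of the frame. The key observation is that $(f_1,\dots,f_N)$ lies in $(\mathscr{E}^N,\mu^{\mathscr{E}}_N)_{[1]}$. Indeed, by the frame identity, for each $y\in\mathscr{E}$,
$$\sum_{i=1}^N\langle y,f_i\rangle\langle f_i,y\rangle\leq\sum_{i=1}^\infty\langle y,f_i\rangle\langle f_i,y\rangle=\langle y,y\rangle,$$
and \eqref{Proposition 3.6} then gives $\mu^{\mathscr{E}}_N(f_1,\dots,f_N)\leq1$. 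Hence $\|s_N\|^{1/k}\leq\pi^{\mathscr{E}}_k(T)$ for all $N$.

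For the reverse, let $(x_1,\dots,x_n)$ have $\mu^{\mathscr{E}}_n\leq1$. Expanding $S^{1/2}x_j$ in the frame gives
$$\langle Sx_j,x_j\rangle=\sum_i\left|\langle f_i,S^{1/2}x_j\rangle\right|^2=\sum_i\left|\langle x_j,S^{1/2}f_i\rangle\right|^2 .$$
Summing over $j$ and using commutativity to interchange the sums, we obtain $\sum_i\sum_j|\langle x_j,S^{1/2}f_i\rangle|^2$. By \eqref{Proposition 3.6} with $y=S^{1/2}f_i$, this is at most $\sum_i\langle Sf_i,f_i\rangle$. The interchange of the infinite sum is justified by truncating at $N$, using that all terms are positive, and passing to the limit in norm for fixed finite $n$. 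Thus $\sum_j\langle Sx_j,x_j\rangle\leq s_N+\varepsilon$ eventually, which is an inequality in $\mathfrak{A}\subseteq\mathfrak{A}^{**}$. Consequently
$$\pi^{\mathscr{E}}_k(T)^k=\sup_N\|s_N\|.$$

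Now turn to convergence. The sequence $(s_N)$ is increasing and positive in the von Neumann algebra $\mathfrak{A}^{**}$. By Vigier's theorem (monotone convergence), it converges weakly (equivalently $\sigma$-weakly) in $\mathfrak{A}^{**}$ if and only if $\sup_N\|s_N\|<\infty$. In that case the limit $s$ satisfies $\|s\|=\sup_N\|s_N\|$, since $s_N\leq s$ and the norm is weakly lower semicontinuous. Combining this with the previous paragraph yields both the equivalence with $T\in\Pi_k(\mathscr{E})$ and the formula $\pi_k^{\mathscr{E}}(T)=\|s\|^{1/k}$.

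The main obstacle I expect is the rigorous interchange of sums in the reverse inequality. One must control the tail $\sum_{i>N}|\langle x_j,S^{1/2}f_i\rangle|^2$ while the frame sum $\sum_i\langle Sf_i,f_i\rangle$ might diverge in norm. I would handle this by comparing only the truncated quantities: $\sum_j\sum_{i\leq N}|\langle x_j,S^{1/2}f_i\rangle|^2\leq s_N$, and the left side converges in norm to $\sum_j\langle Sx_j,x_j\rangle$ as $N\to\infty$ by the frame property applied to each of the finitely many $S^{1/2}x_j$. This avoids needing convergence of $s_N$ at all.
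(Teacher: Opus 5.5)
Your argument is correct, but it takes a different route from the paper's. The paper's proof is a two-line reduction. It observes that $\pi^{\mathscr{E}}_k(T)=\pi^{\mathscr{E}}_2(|T|^{k/2})^{2/k}$, because $(|T|^{k/2})^*|T|^{k/2}=|T|^k$. It then quotes \cite[Theorem 4.6]{sajjad}, the frame characterization of the Hilbert--Schmidt quantity $\pi_2$ over commutative algebras, which already supplies both the weak-convergence criterion in $\mathfrak{A}^{**}$ and the norm formula.

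You instead prove everything directly for $S=|T|^k$. First, initial segments $(f_1,\dots,f_N)$ of the frame are admissible tuples. Second, for an admissible $(x_1,\dots,x_n)$ you expand $S^{1/2}x_j$ in the frame and test \eqref{Proposition 3.6} at $y=S^{1/2}f_i$. Third, the monotone (Vigier) argument handles convergence. In effect you reprove the cited $k=2$ result for the operator $S^{1/2}=|T|^{k/2}$, so the mechanism is the same. Your version is self-contained, and it shows that the bound $\sup_N\|s_N\|\le\pi^{\mathscr{E}}_k(T)^k$ holds for any module with a frame, over any $C^*$-algebra.

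Two small corrections. First, interchanging the finite sums $\sum_j\sum_{i\le N}$ needs nothing. Commutativity is used in the identity $|\langle f_i,S^{1/2}x_j\rangle|^2=|\langle x_j,S^{1/2}f_i\rangle|^2$, that is, $b^*b=bb^*$ for $b=\langle S^{1/2}f_i,x_j\rangle$. The frame expansion produces $\sum_i b^*b$, while \eqref{Proposition 3.6} controls $\sum_j bb^*$, and commutativity is what lets you match the two. Second, the ``only if'' half of the convergence equivalence does not come from Vigier's theorem. It comes from $s_N\le s$ (or from uniform boundedness), which you do note in the following sentence.

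The paper's route buys brevity and reuse of the existing $\pi_2$ theory. Yours buys independence from \cite{sajjad} and makes transparent exactly where the frame and commutativity hypotheses enter.
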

\begin{proof}
 We derive from \cite[Theorem 4.6]{sajjad} that
 \begin{align*}
 	{\pi}_k^{\mathscr{E}}(T)={\pi}_2^{\mathscr{E}}\left( \left| T\right|^{\frac{k}{2}}\right)^{\frac{2}{k}}=\left( \left\| \sum_{i=1}^{\infty}\left\langle \left|T \right|^kf_i,f_i\right\rangle\right\|^{\frac{1}{2}}\right)^{\frac{2}{k}} =\left\| \sum_{i=1}^{\infty}\left\langle \left|T \right|^kf_i,f_i\right\rangle\right\|^{\frac{1}{k}}.
 \end{align*}	
\end{proof}
\begin{example}\label{exam2}
	Let $\mathscr{E}$ be a Hilbert $C^*$-module over a commutative $C^*$-algebra $\mathfrak{A}$ that has a frame $\left( f_i: i\in\mathbb{N}\right)$. It is easy to verify that $\mathfrak{A}^{**}\cong\mathfrak{A}_{u}^{\prime\prime}$ is also commutative. Then Lemma \ref{frame} implies that $\mathscr{E}^{\sharp}$ is a Hilbert $C^*$-module over a commutative $C^*$-algebra $\mathfrak{A}^{**}$ possessing the frame $\left( f_i: i\in\mathbb{N}\right)$. Consequently, Lemma \ref{commutative} ensures that
	\begin{equation}
{\pi}^{\mathscr{E}}_k(T)=\left\| \sum_{i=1}^{\infty}\left\langle \left|T\right|^k f_i,f_i\right\rangle\right\|^{1/k} =\left\| \sum_{i=1}^{\infty}\left\langle \left|T_{\mathscr{E}^{\sharp}}\right|^k f_i,f_i\right\rangle\right\|^{1/k}={\pi}^{\mathscr{E}^{\sharp}}_k(T_{\mathscr{E}^{\sharp}})
	\end{equation} 
for each $T\in \mathcal{L}\left(\mathscr{E}\right)$. 
\end{example}
	 In the setting of Examples \ref{exam1} and \ref{exam2} and Theorem \ref{thm1}, we infer that ${\pi}^{\mathscr{E}}_k(T)=\left\|T\right\|_{\left[k\right]}={\pi}^{\mathscr{E}^{\sharp}}_k(T)$ for each $T\in \mathcal{L}\left(\mathscr{E}\right)$ . Thus, $\left( {\Pi}_k\left(\mathscr{E}\right),{\pi}^{\mathscr{E}}_k\right)$ enjoys the properties of the norm $\left\|\cdot\right\|_{\left[k\right]}$.  For instance, we show that some Schatten $k$-norm inequalities may hold for $\left( {\Pi}_k\left(\mathscr{E}\right),{\pi}^{\mathscr{E}}_k(\cdot)\right)$. We gather some of them in the following proposition.
\begin{proposition}
	Let $\mathscr{E}$ be a Hilbert $\mathfrak{A}$-module such that	${\pi}^{\mathscr{E}}_k(T)={\pi}^{\mathscr{E}^{\sharp}}_k(T_{\mathscr{E}^{\sharp}})$ for each $T\in \mathcal{L}\left(\mathscr{E}\right)$. Then for each $k\geq1$,
 \begin{enumerate}
 	\item
 		$\left( {\Pi}_k\left(\mathscr{E}\right),{\pi}^{\mathscr{E}}_k(\cdot)\right)$ is a normed space.
 		\item
 	(H\"{o}lder inequality)
 	$${\pi}^{\mathscr{E}}_{k}\left(TS\right)\leq{\pi}^{\mathscr{E}}_{r}\left(T\right){\pi}^{\mathscr{E}}_{r^{\prime}}\left(S\right)$$ for $T,S\in\mathcal{L}\left(\mathscr{E}\right)$ and $r,r^{\prime}\geq1$ satisfying $\frac{1}{r}+\frac{1}{r^{\prime}}=\frac{1}{k}$.
 	\item
$${\pi}^{\mathscr{E}}_{2k}\left(\left|T\right|-\left|S\right|\right)\leq{\pi}^{\mathscr{E}}_{2k}\left(T+S\right)^{1/2}{\pi}^{\mathscr{E}}_{2k}\left(T-S\right)^{1/2}$$ for $T,S\in\mathcal{L}\left(\mathscr{E}\right)$; see \cite[Theorem 2.1]{Kittaneh5}.
 	\item
 $$\alpha{\pi}^{\mathscr{E}}_{k}\left(\left|T\right|-\left|S\right| \right)\leq{\pi}^{\mathscr{E}}_{2k}\left(T+S\right){\pi}^{\mathscr{E}}_{2k}\left(T-S\right)$$ for $T,S\in\mathcal{L}\left(\mathscr{E}\right)$ satisfying $\left|T\right|+\left|S\right|\geq\alpha\geq0$; see \cite[Theorem 2.2]{Kittaneh5}.
 	\item
 	$${\pi}^{\mathscr{E}}_{k}\left(T^rX+XS^r\right)\leq r\alpha^{r-1}{\pi}^{\mathscr{E}}_{k}\left(TX+XS\right)$$ for $T,S,X\in\mathcal{L}\left(\mathscr{E}\right)$ with $T,S\geq\alpha>0$ and $0<r\leq1$; see \cite[Corollary 3.2]{Kittaneh5}.
 	\end{enumerate}
\end{proposition}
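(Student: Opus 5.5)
The strategy is to transfer each statement to the bidual extension and then to the Hilbert spaces $\mathscr{E}_\tau$, where the classical Schatten theory applies. The hypothesis ${\pi}^{\mathscr{E}}_k(T)={\pi}^{\mathscr{E}^{\sharp}}_k(T_{\mathscr{E}^{\sharp}})$, combined with Theorem \ref{thm1}, squeezes the middle term, so
$${\pi}^{\mathscr{E}}_k(T)=\left\|T\right\|_{\left[k\right]}=\sup_{\tau\in\mathcal{P}(\mathfrak{A})}\left\|T_{\mathscr{E}_\tau}\right\|_{(k)}$$
for every $T\in\mathcal{L}(\mathscr{E})$ and every $k\geq1$.

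The next step is to check that $T\mapsto T_{\mathscr{E}_\tau}$ is a $*$-homomorphism $\mathcal{L}(\mathscr{E})\to\mathbb{B}(\mathscr{E}_\tau)$. This follows from \eqref{T} together with the identification $\mathscr{E}_\tau\cong\mathcal{E}_\tau=\mathscr{E}\otimes_{\phi_\tau}\mathfrak{A}_\tau$, which is valid for every state by part (1) of the earlier proposition; alternatively, one can verify it directly on the dense subspace $\mathscr{E}/\mathcal{N}^{\mathscr{E}}_\tau$. A $*$-homomorphism commutes with the continuous functional calculus, so $(|T|)_{\mathscr{E}_\tau}=|T_{\mathscr{E}_\tau}|$, $(T^r)_{\mathscr{E}_\tau}=(T_{\mathscr{E}_\tau})^r$ for positive $T$, and $(TX+XS)_{\mathscr{E}_\tau}=T_{\mathscr{E}_\tau}X_{\mathscr{E}_\tau}+X_{\mathscr{E}_\tau}S_{\mathscr{E}_\tau}$. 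Positivity is also preserved, so $T,S\geq\alpha$ and $|T|+|S|\geq\alpha$ pass to $\mathscr{E}_\tau$.

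With this in hand, each item follows by applying the Hilbert space inequality at each fixed $\tau$ and then taking the supremum over $\tau$.

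\begin{enumerate}
\item Let $T,S\in\Pi_k(\mathscr{E})$. Minkowski's inequality for $\|\cdot\|_{(k)}$ on $\mathscr{E}_\tau$ gives $\|(T+S)_{\mathscr{E}_\tau}\|_{(k)}\leq {\pi}^{\mathscr{E}}_k(T)+{\pi}^{\mathscr{E}}_k(S)$; taking the supremum gives the triangle inequality. Homogeneity is immediate. Definiteness uses $\|T\|\leq\pi^{\mathscr{E}}_k(T)$, which can be seen either from Proposition \ref{isimetric} or from the fact that $\sup_\tau\|T_{\mathscr{E}_\tau}\|=\|T\|$: pick a pure state with $\tau(\langle Tx,Tx\rangle)$ close to $\|Tx\|^2$.
\item For the Hölder inequality, $\|T_{\mathscr{E}_\tau}S_{\mathscr{E}_\tau}\|_{(k)}\leq\|T_{\mathscr{E}_\tau}\|_{(r)}\|S_{\mathscr{E}_\tau}\|_{(r')}$, and each factor is bounded by the corresponding supremum.
\item[(3)--(5)] At each $\tau$, apply \cite[Theorem 2.1, Theorem 2.2, Corollary 3.2]{Kittaneh5} to the operators $T_{\mathscr{E}_\tau}$, $S_{\mathscr{E}_\tau}$, $X_{\mathscr{E}_\tau}$. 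Then bound the right-hand side by its supremum over $\tau$; the right-hand sides are products of nonnegative factors, and each factor is monotone in its supremum. Finally, take the supremum on the left.
\end{enumerate}

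The main obstacle I expect is the identification $|T_{\mathscr{E}_\tau}|=(|T|)_{\mathscr{E}_\tau}$ and, more generally, compatibility with the functional calculus. The paper already uses this identification silently in the proof of Theorem \ref{thm1}. I would justify it by showing that $T\mapsto T_{\mathscr{E}_\tau}$ is a bounded $*$-homomorphism, with $\|T_{\mathscr{E}_\tau}\|\leq\|T\|$ because $\langle Tx,Tx\rangle\leq\|T\|^2\langle x,x\rangle$. The functional calculus statement then follows by polynomial approximation.

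A secondary point is that the Kittaneh inequalities are stated for the Schatten norms on a fixed Hilbert space, including the case where the relevant norms may be infinite. I would handle this by noting that the inequalities are trivial whenever the right-hand side is infinite. In item (4) with $\alpha=0$ the statement is also trivial.
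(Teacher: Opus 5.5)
Your proposal is correct and is essentially the paper's argument: use Theorem \ref{thm1} and the hypothesis to squeeze $\pi^{\mathscr{E}}_k(T)=\|T\|_{[k]}=\sup_{\tau\in\mathcal{P}(\mathfrak{A})}\|T_{\mathscr{E}_\tau}\|_{(k)}$, then push each Hilbert space inequality through the $*$-homomorphisms $T\mapsto T_{\mathscr{E}_\tau}$ and take suprema; you also supply the details the paper leaves implicit (functional calculus, preservation of $\geq\alpha$, definiteness). One minor slip: part (1) of the earlier proposition only embeds $\mathscr{E}_\tau$ isometrically into $\mathcal{E}_\tau$, and the identification $\mathscr{E}_\tau\cong\mathcal{E}_\tau$ is part (2), for pure states, which is all you need anyway (your direct verification on $\mathscr{E}/\mathcal{N}^{\mathscr{E}}_\tau$ avoids the issue entirely).
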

\begin{proof}
	Notice that ${\pi}^{\mathscr{E}}_k(T)=\left\|T\right\|_{[k]}=\sup_{\tau\in\mathcal{P}\left(\mathfrak{A}\right)}\left\|T_{\mathscr{E}_\tau}\right\|_{\left(k\right)}$ and we know that the map $\mathcal{L}\left(\mathscr{E}\right)\rightarrow\mathcal{L}\left(\mathscr{E}_{\tau}\right)$ defined by $T\mapsto T_{\mathscr{E}_\tau}$ is a $*$-homomorphism.
\end{proof} 
We conclude this paper with the following open problem:
\begin{problem}
	Let $\mathscr{E}$ be a Hilbert $\mathfrak{A}$-module. Is ${\pi}^{\mathscr{E}}_k(T)={\pi}^{\mathscr{E}^{\sharp}}_k(T_{\mathscr{E}^{\sharp}})$ for each $T\in \mathcal{L}\left(\mathscr{E}\right)$? Moreover, can one find some classes of Hilbert $C^*$-modules for which the equality holds?
\end{problem}
\medskip
\noindent \textit{Acknowledgment.} The authors would like to sincerely thank the referee for valuable comments improving the paper.
\medskip
\noindent \textit{Conflict of Interest Statement.} On behalf of the authors, the corresponding author states that there is no conflict of interest.\\
\medskip
\noindent\textit{Data Availability Statement.} Data sharing not applicable to this article as no datasets were generated or analyzed during the current study.
\medskip
\bibliographystyle{amsplain}

\end{document}